\documentclass[12pt,a4paper]{amsart}
\usepackage{amsmath}
\usepackage{amsfonts}
\usepackage{amssymb}
\usepackage{url}
\def\Z{{\Bbb Z}}
\def\Q{{\Bbb Q}}    
\def\R{{\Bbb R}}
\def\C{{\Bbb C}}     
\def\F{{\Bbb F}}   
\def\cF{{\mathcal F}}
\def\p{{\partial}}
\def\fF{{\mathfrak F}}
\def\D{{\Bbb D}}
\def\a{{\alpha}}
\def\b{{\beta}}
\def\c{{\gamma}}
\def\d{{\delta}}
\def\fM{{\mathfrak M}}
\def\cH{{\mathcal H}}
\def\depth{{\rm depth}}
\def\X{{X}}
\def\cM{{\mathcal M}}
\def\F{{\mathcal F}}
\def\G{{\mathcal G}}
\def\Res{{\rm Res}}
\def\Sym{{\rm Sym}}

\newtheorem{theorem}{Theorem}[section]
\newtheorem{lemma}[theorem]{Lemma}
\newtheorem{proposition}[theorem]{Proposition}
\newtheorem{cor}[theorem]{Corollary}

\begin{document}
\title{Modules of Jacobi forms of degree two of small levels}
\author[Aoki and Ibukiyama]{Hiroki Aoki and Tomoyoshi Ibukiyama}
\address{Faculty of Science and Technology, 
Tokyo University of Science, Noda, Chiba, 278-8510}
\email{aoki\_hiroki\_math@nifty.com}
\address{Department of Mathematics, Graduate School of Science, The university of Osaka,
Machikaneyama 1-1, Toyonaka, Osaka, 560-0043 Japan}
\email{ibukiyam@math.sci.osaka-u.ac.jp}

\keywords{Jacobi forms, Siegel modular forms, Differential operators}
\subjclass[2020]{Primary: 11F46,11F50}
\thanks{The first author is supported by 
JSPS Kakenhi Grant Number JP23K03039. 
The second author is supported by 
JSPS Kakenhi Grant Number JP23K03031}

\maketitle

\begin{abstract}
The purpose of this paper is to describe explicitly 
the modules of (Siegel-)Jacobi forms of degree two of 
index one of any scalar valued weight with respect to 
some congruence subgroups of small levels $N\leq 4$.
Such a structure for the full Siegel modular group 
as a module over scalar valued Siegel modular forms of even weight 
has been explicitly given by T.~Ibukiyama.
There we used an explicit structure theorem of rings of scalar valued Siegel modular forms by Igusa and that of vector valued Siegel modular 
forms of weight $\det^k\, \Sym^2$ by T. Satoh and T. Ibukiyama.
On the other hand, for levels $N=2$, $3$, $4$, ring structures of scalar valued case 
 have been also known by H. Aoki and T. Ibukiyama and $\det^k \Sym^2$-valued case by H. Aoki.   
 In this paper, by merging these results, we give the same sort of simple 
 structure theorems 
 on modules of Jacobi forms of degree of two of index one for level $2$, $3$ and $4$. 
\end{abstract}

\section{Problems and theorems}
\allowdisplaybreaks[4]
The purpose of this paper is to describe explicitly 
the modules of (Siegel-)Jacobi forms of degree two of 
index one of any scalar valued weight with respect to 
some congruence subgroups of small levels $N\leq 4$.
In this section, we shortly explain results without going into details.
More explicit description will be explained in later sections.

Let $H_n$ be the Siegel upper half space of degree $n$
given by
\[
H_n=\{X+iY \in M_n(\C); X=\,^{t}X, Y=\,^{t}Y\in M_n(\R),  Y>0\}.
\] 
We denote by $Sp(n,\R)\subset M_{2n}(\R)$
the symplectic group acting on $H_n$ as usual.
We define a Siegel modular group by 
$\Gamma_n=Sp(n,\Z)=Sp(n,\R)\cap M_{2n}(\Z)$.
Let $\Gamma$ be a finite index subgroup of 
$Sp(2,\Z)$. 
A holomorphic function $\Phi(\tau,z)$ in $(\tau,z)\in H_2\times \C^2$
is said to be a Jacobi form $\Phi(\tau,z)$ of degree two of index $1$
of weight $k$ with respect to 
$\Gamma$, if it satisfies the following two conditions 
\eqref{jacobi1} and \eqref{jacobi2}
for any $g=\begin{pmatrix} A & B \\ C & D \end{pmatrix} \in \Gamma$
and row vectors $\lambda$, $\mu\in \Z^2$. 
Here we write $e(\alpha)=e^{2\pi i\alpha}$
for any $\alpha \in \C$.
\begin{align}
 \label{jacobi1}
 \Phi(g\tau,z(C\tau+D)^{-1})& =
 \det(C\tau+D)^{k}e(z(C\tau+D)^{-1}C\,^{t}z)\Phi(\tau,z),
 \\
\Phi(\tau,z+\lambda\tau+\mu)& = e(-\lambda\tau\,^{t}
\lambda-2\lambda\,^{t}z)
\Phi(\tau,z). \label{jacobi2}
\end{align} 
We 
denote the space of such Jacobi forms by $J_{k,1}(\Gamma)$.
For any $k\geq 0$ and even $j\geq 0$, 
we denote by $A_{k,j}(\Gamma)$ the space of 
Siegel modular forms of weight $\det^k \Sym^j$ with respect to $\Gamma$, where $\Sym^j$ is the symmetric tensor representation of $GL_2(\C)$ of degree $j$.
In this paper, we treat some special $\Gamma$ of small levels. 

For any positive integer $N$, we put 
\[
\Gamma_0^{(n)}(N)=\left\{\begin{pmatrix} A & B \\ C & D \end{pmatrix}\in Sp(n,\Z);
C\equiv 0 \bmod N\right\}.
\]
For $N=1$, we write $\Gamma_0^{(n)}(1)=Sp(n,\Z)$ or $\Gamma_n$. 
For $N=3$ and $4$, we define a character $\psi$ of $\Gamma_0^{(n)}(3)$, $\Gamma_0^{(n)}(4)$ 
by $\psi(g)=\left(\frac{-N}{\det(D)}\right)$ for $g=\begin{pmatrix} A & B \\ C & D 
\end{pmatrix}$,
where $\left(\dfrac{*}{*}\right)$ means the quadratic residue character
corresponding to $\Q(\sqrt{-N})$.
Since we mostly treat the case $n=2$, 
we simply write 
$\Gamma_0(N)=\Gamma_0^{(2)}(N)$. 
We also write 
\[
\Gamma_0^0(2)=\left\{
\begin{pmatrix} A & B \\ C & D \end{pmatrix}
\in Sp(2,\Z); B\equiv C\equiv 0 \bmod 2\right\}.
\]
The character $\psi$ can be similarly defined for 
$\Gamma_0^0(2)$.
If we put $\gamma_2=\begin{pmatrix} 1_2 & 0 \\ 0 & 2\cdot 1_2 \end{pmatrix}$, then $\gamma_2^{-1}\Gamma_0(4)\gamma_2=\Gamma_0^0(2)$, so it is clear that $A_{k,j}(\Gamma_0(4)) \cong A_{k,j}(\Gamma_0^0(2))$, 
but we will see later that 
$J_{k,1}(\Gamma_0(4))$ 
and $J_{k,1}(\Gamma_0^0(2))$ are quite different, since the
action of $\gamma_2$ changes the Heisenberg part and hence the index.  
For each $N=3$ or $4$, we write 
$\Gamma_0(N)^{\psi}=Ker(\psi)$, which is a subgroup of 
$\Gamma_0(N)$ of index two.
We also put $\Gamma_0^0(2)^{\psi}=Ker(\psi:\Gamma_0^0(2)\rightarrow \{\pm 1\})$. 
In this paper, we determine Jacobi forms of index $1$ with 
respect to $\Gamma=\Gamma_2$, $\Gamma_0(2)$, 
$\Gamma_0(3)^{\psi}$, $\Gamma_0(4)^{\psi}$ and 
$\Gamma_0^0(2)^{\psi}$. 
When $N=1$ or $2$, we put $\psi=1$ and sometimes write 
$\Gamma_0(N)^{\psi}=\Gamma_0(N)$ for $N=1$, $2$ to 
unify the notation.
For $\tau\in H_2$ and $z\in \C^2$, we write 
$\tau=\begin{pmatrix} \tau_{11} & \tau_{12} \\ \tau_{12} & \tau_{22}\end{pmatrix}$
and $z=(z_1,z_2)$. 
We put 
\[
I=\begin{pmatrix} 1 & 0 & 0 & 0 \\ 0 & -1 & 0 & 0 \\ 0 &0 & 1 & 0 \\ 0 & 0 & 0 & -1 
\end{pmatrix}.
\]
We assume for a while that $\Gamma$ is one of the above 
five groups. We have $\psi(I)=-1$ for $\Gamma_0(3)$, 
$\Gamma_0(4)$, $\Gamma_0^0(2)$, but 
since we have $I\Gamma=\Gamma I$, 
a space of elements invariant by an action of $\Gamma$ is also invariant by 
$I$. This means we have a direct sum decomposition  $A_k(\Gamma)=A_k^{+}(\Gamma)\oplus A_k^{-}(\Gamma)$
where for $\varepsilon=+$ or $-$ we put
\[
A^{\varepsilon}_{k}(\Gamma)=\left\{
F\in A_k(\Gamma); 
F\begin{pmatrix} \tau_{11} & -\tau_{12} \\
-\tau_{12} & \tau_{22}\end{pmatrix}
= \varepsilon 
F\begin{pmatrix} \tau_{11} & \tau_{12} \\
\tau_{12} & \tau_{22}\end{pmatrix}
\right\}.
\]
Similarly, we have $J_{k,1}(\Gamma)=J_{k,1}^{+}(\Gamma)\oplus J_{k,1}^{-}(\Gamma)$ 
where we put  
\[
J_{k,1}^{\varepsilon}(\Gamma)=
\left\{\Phi\in J_{k,1}(\Gamma); 
\Phi\left(\begin{pmatrix}\tau_{11} & -\tau_{12} \\ -\tau_{12} & \tau_{22} 
\end{pmatrix},\begin{pmatrix} z_1 \\ -z_2 \end{pmatrix}\right)
=\varepsilon \Phi(\tau,z)\right\}.
\]
For each $\varepsilon=\pm$, we put 
\begin{align*}
A^{\varepsilon}(\Gamma) =\oplus_{k=0}^{\infty}A_k^{\varepsilon}(\Gamma),
\qquad  
J_{*,1}^{\varepsilon}(\Gamma) =\oplus_{k=1}^{\infty}J_{k,1}^{\varepsilon}(\Gamma).
\end{align*}
Here $A^+(\Gamma)$ is a graded ring and 
$J_{*,1}^{\varepsilon}(\Gamma)$ is a graded module over $A^+(\Gamma)$.
We often call the case $\varepsilon=+$ as type I and the case $\varepsilon=-$ as 
type II, since the meanings of plus and minus are sometimes confusing.
So we write also $A^I(\Gamma)=A^+(\Gamma)$ and $A^{II}(\Gamma)=A^{-}(\Gamma)$,
or $J^{I}_{*,1}(\Gamma)=J^+_{*,1}(\Gamma)$ and $J^{II}_{*,1}(\Gamma)=J^{-}_{*,1}(\Gamma)$, 
replacing $\varepsilon$ by I and II according to $\varepsilon=+$ and $-$, respectively.

The purpose of this paper is to describe 
the modules $J_{*,1}^{I}(\Gamma)$ and  $J_{*,1}^{II}(\Gamma)$
concretely as modules over $A^I(\Gamma)$. 
For $N=1$ and $2$, we have 
$A^I(\Gamma)=\sum_{k=0}^{\infty}A_{2k}(\Gamma)$. 
For $N=3$ and $4$, if we put 
\[
A_k(\Gamma_0(N),\psi)=\left\{F\in A_k(\Gamma); F|_{k}[\gamma]=\psi(\gamma)F
\right\}.
\]
then it is ovbious that we have 
\[
A_k(\Gamma_0(N)^{\psi})=A_k(\Gamma_0(N))\oplus 
A_k(\Gamma_0(N),\psi)),
\]
and we also have 
\[
A^I(\Gamma_0(N))=\sum_{k:even}A_k(\Gamma_0(N)) \oplus 
\sum_{k:odd}A_k(\Gamma_0(N),\psi).
\]
By results in \cite{ibuaoki},  the rings $A^I(\Gamma)$ 
are all weighted polynomial rings for the above four 
$\Gamma$. They are generated by elements  
in $A_k^I(\Gamma) $ with 
$k \in \{4,6,10,12\}$ for $N=1$, $\{2,4,4,6\}$ for $N=2$, 
$\{1,3,3,4\}$ for $N=3$ and $\{1,2,2,3\}$ for $N=4$, respectively. 
 
\begin{theorem} 
Each space $J_{*,1}^{I}(\Gamma)$ or $J_{*,1}^{II}(\Gamma)$ in the left column 
of the following table 
is a free module over $A^+(\Gamma)$ of rank $4$
and the weights of generators are given in the right column.
\[ 
\begin{array}{|c|c|} \hline
space & weight \\ \hline
J_{*,1}^{I}(\Gamma_2) &  4,6,10,12 \\
J_{*,1}^{II}(\Gamma_2) &  21, 27, 29, 35 \\
J_{*,1}^{I}(\Gamma_0(2)) & 2, 4, 4, 6  \\
J_{*,1}^{II}(\Gamma_0(2)) & 13, 15, 17, 19 \\
J_{*,1}^{I}(\Gamma_0(3)^{\psi}) & 1, 3, 4, 6 \\
J_{*,1}^{II}(\Gamma_0(3)^{\psi}) & 9, 11, 12, 14 \\
J_{*,1}^{I}(\Gamma_0(4)^{\psi}) & 1, 2, 3, 4  \\
J_{*,1}^{II}(\Gamma_0(4)^{\psi}) & 7, 9, 11, 11 \\
J_{*,1}^{I}(\Gamma_0^0(2)^{\psi}) & 1, 2, 2, 3  \\
J_{*,1}^{II}(\Gamma_0^0(2)^{\psi}) & 9, 10, 10, 11 \\ \hline 
\end{array}
\]
All the generators are explicitly given, which will be explained 
later.
\end{theorem}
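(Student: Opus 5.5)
The plan is to reduce everything to the known structure theorems for Siegel modular forms via the theta decomposition of index-one Jacobi forms and the associated Witt/Eichler--Zagier isomorphism.

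\emph{Step 1: theta decomposition.} For index $1$ and degree two, write
\[
\Phi(\tau,z)=\sum_{a\in(\Z/2\Z)^2} h_a(\tau)\,\vartheta_a(\tau,z),
\]
where $\vartheta_a(\tau,z)=\sum_{p\in\Z^2}e\bigl((p+a/2)\tau\,^t(p+a/2)+2(p+a/2)\,^tz\bigr)$. The four theta constants (the values at $z=0$) and the second Taylor coefficients in $z$ give an injective linear map
\[
\Phi\mapsto\bigl(\Phi(\tau,0),\ \text{the $z$-quadratic part corrected by a Rankin--Cohen type bracket}\bigr)
\]
into $A_k(\Gamma)\oplus A_{k+2,2}(\Gamma)$ (up to the appropriate character). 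Equivalently, one can use the Taylor-coefficient differential operators $\Phi\mapsto(D_0\Phi,D_2\Phi)$.

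\emph{Step 2: generators and independence.} In the type I case (even in $z$), the map should land isomorphically onto a subspace described by a linear compatibility condition. In the type II case, the odd part forces vanishing at $z=0$, and the leading term lands in $A_{k+1,2}$ or in $\det^{k+1}$-twisted objects. By the results of \cite{ibuaoki} and Aoki's structure theorems for $\bigoplus_k A_{k,2}(\Gamma)$, one writes down explicit candidates. For type I, Jacobi--Eisenstein-like forms, or the theta products $\vartheta_a\vartheta_b\cdots$ restricted appropriately, serve as candidates, in the weights listed (which match the generator weights of $A^I(\Gamma)$ for $N=1,2$). For type II, candidates are obtained by applying the differential operators of Satoh--Ibukiyama type to pairs of generators. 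One then checks linear independence over $A^+(\Gamma)$ by computing a $4\times4$ ``Wronskian'' determinant of the candidates, viewed as functions of the four theta coefficients $h_a$. This determinant should be a nonzero multiple of a known modular form, for example the cusp form of weight $10$, or $\chi_{35}$ when $N=1$. Its weight must equal the sum of the generator weights minus the contribution of the theta matrix. Nonvanishing gives freeness of the submodule they span.

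\emph{Step 3: equality via dimensions.} Finally, show that the submodule spanned by the candidates is everything. The plan is to compare Hilbert series. Using Step 1, $\dim J_{k,1}^{\varepsilon}(\Gamma)$ is expressed through $\dim A_k$ and $\dim A_{k,2}$ for the group, or alternatively through the Hilbert series of the vector-valued modules $(h_a)_a$ under the Weil representation. These dimensions are known generating functions from the cited structure theorems. One then checks
\[
\sum_k \dim J^{\varepsilon}_{k,1}\,t^k=\frac{\sum_i t^{w_i}}{\prod_j(1-t^{d_j})},
\]
where the $w_i$ are the weights in the table and the $d_j$ are the weights of the generators of $A^I(\Gamma)$.

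An alternative cleaner route to the same conclusion: given any $\Phi$, Cramer's rule with the determinant from Step 2 shows $\Phi$ lies in the span over the fraction field, with denominators dividing that determinant. Divisibility properties (the determinant vanishes exactly on the $\Gamma$-orbit of the diagonal or on the divisor of a known form, to first order) then show the coefficients are holomorphic.

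\emph{Main obstacle.} The hardest step is the exact identification of the image in Step 1, or equivalently the precise vanishing-order argument in the Cramer step, uniformly for the groups with character $\psi$ and for $\Gamma_0^0(2)^\psi$. There the Heisenberg part transforms differently, so the theta components permute under a different finite group. I would first try the determinant approach, computing its Fourier expansion numerically to identify it with a product of generators.
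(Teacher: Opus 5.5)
There is a genuine gap, and it starts with a misreading of type II. Since $-1_4\in\Gamma$ and $\det(-1_2)=1$, every $\Phi\in J_{k,1}(\Gamma)$ here satisfies $\Phi(\tau,-z)=\Phi(\tau,z)$. The splitting into types I and II comes from the involution $(\tau_{12},z_2)\mapsto(-\tau_{12},-z_2)$, not from parity in $z$.

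So type II forms do not vanish at $z=0$. Instead $\xi_0(\Phi)\in A^-_k(\Gamma)=\chi A^+(\Gamma)$, with $\chi=\chi_{35},\chi_{19},\chi_{14},\chi_{11},\chi_{11}$, and in each case one generator (of weight $35$, $19$, $14$, $11$, $11$) has $\xi_0=\chi\neq 0$. Constructing that generator is the most delicate step of the paper. Since $W(\p_{12}(\chi|_k[g]))\neq 0$, you must find $h\in A^-_{k,2}(\Gamma)$ that cancels it on every diagonal orbit simultaneously. At level $1$, $h$ is a multiple of $\chi_{12}\{\varphi_4,\varphi_6,\chi_{12}\}$, found via $W(\p_{12}^2\chi_{10})=\frac16 W(\chi_{12})$.

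Your recipe of bracketing pairs of generators cannot produce this, and it produces the wrong type. The bracket $\{f_1,f_2\}$ lies in $A^+_{*,2}$, whereas $A^-_{*,2}$ is generated by triple brackets $\{f_1,f_2,f_3\}$. Note also that $\xi_{k,2}$ lands in $\det^k\Sym^2$, i.e.\ $A_{k,2}$, not $A_{k+2,2}$.

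Your Step 3 is circular. The formula $\dim J_{k,1}=\dim A_k+\dim A_{k,2}$ holds only when $\xi$ is surjective, which happens only in type I for $\Gamma_2$, $\Gamma_0(2)$ and $\Gamma_0^0(2)^\psi$.
\begin{itemize}
\item For $\Gamma_0(3)^\psi$, $e_3$ is not $\xi_0$ of any Jacobi form, because $W(\p_{12}(e_3|_3[K]))\neq0$ and $A_{3,2}(\Gamma_0(3)^\psi)=0$.
\item Likewise, $c_2$ fails on the orbit of $M_1$ for $\Gamma_0(4)^\psi$.
\item This is exactly why the type I weights there are $1,3,4,6$ and $1,2,3,4$ (using $e_3^2$ and $c_2^2$), not the ring-generator weights $1,3,3,4$ and $1,2,2,3$ as you expected.
\item In type II, $\xi$ is never surjective.
\end{itemize}
Hilbert series of Weil-representation-valued forms for these groups are not available in the cited literature; knowing them is equivalent to the theorem.

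The missing idea is the description of the image in Proposition \ref{propwittcondition}. Two facts drive it: $\det\Theta=\chi_5/4$ vanishes to first order exactly on the $Sp(2,\Z)$-orbit of the diagonal, and the $\p_{12}$-row of $\Theta$ vanishes under $W$. Hence $(f_0,h)$ comes from a holomorphic $\Phi$ if and only if $W\bigl(h^g_{11}+\frac{2\pi i}{k}\p_{12}(f_0|_k[g])\bigr)=0$ for $g$ in $\Gamma\backslash Sp(2,\Z)/\Gamma_0$. The proof then consists of three tasks:
\begin{itemize}
\item computing these double cosets;
\item evaluating the Witt images through theta transformation formulas;
\item solving the resulting syzygy problems with \eqref{rel2}, \eqref{rel3} and Aoki's description of $A_{*,2}$.
\end{itemize}

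Your Cramer-rule alternative does not bypass this. You would have to show that every numerator determinant vanishes on the divisor of your Wronskian, which is the same local analysis along the diagonal orbits. You would also need the candidate generators to exist in the first place.
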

The result for $N=1$ has been already given in \cite{ibujacobi} 
and the point here is the case $N\geq 2$.

\begin{cor}
The generating functions of dimensions of Jacobi forms are given as follows.
\begin{align*}
\sum_{k=1}^{\infty}\dim J_{k,1}^I(\Gamma_2) t^k
& = 
\frac{t^4+t^6+t^{10}+t^{12}}
{(1-t^4)(1-t^6)(1-t^{10})(1-t^{12})}, \\
\sum_{k=1}^{\infty}\dim J_{k,1}^{II}(\Gamma_2)t^k 
& = 
\frac{t^{21}+t^{27}+t^{29}+t^{35}}
{(1-t^4)(1-t^6)(1-t^{10})(1-t^{12})}, \\
\sum_{k=1}^{\infty}\dim J_{k,1}^I(\Gamma_0(2))t^k 
& = 
\frac{t^2+2t^4+t^6}
{(1-t^2)(1-t^4)^2(1-t^6)}, \\
\sum_{k=1}^{\infty}\dim J_{k,1}^{II}(\Gamma_0(2))t^k 
& = 
\frac{t^{13}+t^{15}+t^{17}+t^{19}}
{(1-t^2)(1-t^4)^2(1-t^6)}, \\
\sum_{k=1}^{\infty}\dim J_{k,1}^I(\Gamma_0(3)^{\psi})t^k 
& = 
\frac{t+t^3+t^4+t^6}
{(1-t)(1-t^3)^2(1-t^4)}, \\
\sum_{k=1}^{\infty}\dim J_{k,1}^{II}(\Gamma_0(3)^{\psi})t^k 
& = 
\frac{t^9+t^{11}+t^{12}+t^{14}}
{(1-t^2)(1-t^3)^2(1-t^4)}, \\
\sum_{k=1}^{\infty}\dim J_{k,1}^I(\Gamma_0(4)^{\psi})t^k 
& = 
\frac{t+t^2+t^3+t^4}
{(1-t)(1-t^2)^2(1-t^3)}, \\
\sum_{k=1}^{\infty}\dim J_{k,1}^{II}(\Gamma_0(4)^{\psi})t^k 
& = 
\frac{t^7+t^9+t^{11}+t^{11}}
{(1-t)(1-t^2)^2(1-t^3)},\\
\sum_{k=1}^{\infty}\dim J_{k,1}^I(\Gamma_0^0(2)^{\psi})t^k 
& = 
\frac{t+2t^2+t^3}
{(1-t)(1-t^2)^2(1-t^3)}, \\
\sum_{k=1}^{\infty}\dim J_{k,1}^{II}(\Gamma_0^0(2)^{\psi})t^k 
& = 
\frac{t^9+2t^{10}+t^{11}}
{(1-t)(1-t^2)^2(1-t^3)}. \\
\end{align*}
\end{cor}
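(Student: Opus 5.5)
The corollary follows formally from the preceding theorem once we know the Hilbert series of the ring $A^I(\Gamma)=A^+(\Gamma)$. By the results of \cite{ibuaoki} quoted just before the theorem, $A^I(\Gamma)$ is a weighted polynomial ring in four algebraically independent generators. Their weights are $\{4,6,10,12\}$, $\{2,4,4,6\}$, $\{1,3,3,4\}$ and $\{1,2,2,3\}$ for $N=1,2,3,4$. Hence
\[
\sum_{k\ge 0}\dim A^I_k(\Gamma)\,t^k=\prod_{i=1}^{4}\frac{1}{1-t^{a_i}},
\]
where $a_1,\dots,a_4$ are these weights. The group $\Gamma_0^0(2)^{\psi}$ is conjugate to $\Gamma_0(4)^{\psi}$ by $\gamma_2$. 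This conjugation preserves the scalar weight and commutes with the involution $I$, because $\gamma_2$ is diagonal and commutes with $I$. Therefore $A^I(\Gamma_0^0(2)^\psi)\cong A^I(\Gamma_0(4)^\psi)$ as graded rings, and they have the same Hilbert series $1/((1-t)(1-t^2)^2(1-t^3))$.

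Next, the theorem says each $J^{\varepsilon}_{*,1}(\Gamma)$ is a free graded $A^I(\Gamma)$-module of rank $4$. It has homogeneous generators $\Phi_1,\dots,\Phi_4$ of weights $b_1,\dots,b_4$ given in the table. Freeness gives a graded decomposition
\[
J^{\varepsilon}_{k,1}(\Gamma)=\bigoplus_{i=1}^{4}A^I_{k-b_i}(\Gamma)\,\Phi_i ,
\]
so $\dim J^{\varepsilon}_{k,1}(\Gamma)=\sum_i \dim A^I_{k-b_i}(\Gamma)$. Multiplying by $t^k$ and summing over $k$ gives the generating function $(t^{b_1}+\dots+t^{b_4})\prod_i(1-t^{a_i})^{-1}$. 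Substituting the table entries yields each of the ten displayed identities; for example, the weights $2,4,4,6$ give the numerator $t^2+2t^4+t^6$.

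There is no real obstacle here; the only care needed is bookkeeping. First, the sums start at $k=1$, which is harmless because every $b_i\ge 1$, so no Jacobi form of weight $0$ contributes. Second, one must use the correct denominator for each level. In particular, the displayed denominator for $J^{II}_{*,1}(\Gamma_0(3)^\psi)$ is $(1-t^2)(1-t^3)^2(1-t^4)$, whereas the ring weights $\{1,3,3,4\}$ give $(1-t)(1-t^3)^2(1-t^4)$. I would check this against the type I line, which uses $(1-t)$. If the discrepancy persists, the $(1-t^2)$ should be read as a misprint for $(1-t)$, since the derivation forces the same ring Hilbert series in both lines.
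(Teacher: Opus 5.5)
Your proposal is correct and is essentially the paper's argument: the corollary is read off from the freeness and generator weights in the main theorem together with the Hilbert series $\prod_i(1-t^{a_i})^{-1}$ of the weighted polynomial ring $A^I(\Gamma)$. For $\Gamma_0(2)$ of type I the paper runs it in the opposite direction, computing $\dim A^+_k+\dim A^+_{k,2}$ first and deducing freeness from that count. You are also right about the level-$3$ type II line: the paper's own proposition at the end of Section~4 has the denominator $(1-t)(1-t^3)^2(1-t^4)$, so the $(1-t^2)$ in the corollary is a misprint.
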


Our strategy of the proof of the above statements is shortly explained as follows.
For Jacobi forms of degree $n=1$, it is well known that the Taylor coefficients of Jacobi forms are essentially elliptic modular forms (see \cite{eichlerzagier}). We use the similar facts here for $n=2$. 
For any $\Phi(\tau,z) \in J_{k,1}(\Gamma)$, we consider the 
Taylor expansion of $\Phi$ around $z=(z_1,z_2)=0$. 
Since we see $\Phi(\tau,-z)=\Phi(\tau,z)$ by the action of $-1_4$
in \eqref{jacobi1}, we may write 
\begin{equation}\label{taylor}
\Phi(\tau,z)=f_0(\tau)+f_{20}(\tau)z_1^2+f_{11}(\tau)z_1z_2+f_{02}(\tau)z_2^2+\cdots             
\end{equation}
We denote by $A_{k,2}(\Gamma)$ the vector valued 
Siegel modular forms of weight $\det^k \Sym^2$ with respect to 
$\Gamma$, where $\Sym^2$ is the symmetric tensor representation 
of $GL_2(\C)$. The representation space of $\Sym^2$ is identified 
with $\C u_1^2+\C u_1u_2+ \C u_2^2$, where 
the action is given by $(u_1,u_2)\rightarrow (u_1,u_2)A$ for $A \in GL_2(\C)$.
Now we put  
\begin{align}
\xi_0(\Phi)  = & f_0(\tau),  \\
\xi_{k,2}(\Phi)  =  & f_{20}(\tau)u_1^2+f_{11}(\tau)u_1u_2+f_{02}(\tau)u_2^2 \label{taylordiff}
 \\ & -\frac{2\pi i}{k}\left(\frac{\p f_0}{\p \tau_{11}}u_1^2+\frac{\p f_0}{\p \tau_{12}}u_1u_2
+\frac{\p f_0}{\p \tau_{22}}u_2^2\right). \notag
\end{align}
We write $\xi(\Phi)=(\xi_0(\Phi), \xi_{k,2}(\Phi))$. 
Then by \cite{ibujacobi} Theorem 3.1, 
we have a linear mapping 
\[
J_{k,1}(\Gamma)\ni \Phi \rightarrow \xi(\Phi)\in A_k(\Gamma) 
\times A_{k,2}(\Gamma).
\]
The map $\xi$ is always injective (loc. cit.), but not surjective in general. 
So the main point of the proof is to determine the image of this map.
The image is characterized by some vanishing conditions on various 
$\Gamma$-orbits of the diagonal components (See Proposition \ref{propwittcondition}). 
To check this conditions, we need explicit structures of $A_k(\Gamma)$ in \cite{ibuaoki}
and $A_{k,2}(\Gamma)$ in \cite{aokivector} and also 
the restriction of derivatives of $A_k(\Gamma)$ on various $\Gamma$-orbits in 
the $Sp(2,\Z)$-orbits of diagonals in $H_2$. 
Many arguments essentially depend on differential operators 
that preserves automorphy under the restrictions of domains.
The paper is organized as follows.
In Section 2, we review modular forms and fundamental properties of 
differential operators on those. In section 3, we give a proof for level $2$.
In section 4, we give a proof for level $3$, In section 5, 6, 7, we give proofs for two level $4$ cases.

\section{Review on modular forms and differential operators}
\subsection{Siegel modular forms and Jacobi forms}
For any finite dimensional representation $(\rho,V)$ of $GL_n(\C)$, 
the group $Sp(n,\R)$ acts on $V$-valued holomorphic functions $F$ of $H_n$ by 
\[
(F(Z)|_{\rho}[g])=\rho(C\tau+D)^{-1}F(gZ), \qquad 
g=\begin{pmatrix} A & B \\ C & D \end{pmatrix}
\in Sp(n,\R).
\]
When $\rho=\det^k$, we write $F|_{\rho}[g]=F|_{k}[g]$.
When it is obvious from the context, we simply write $F|[g]=F|_k[g]$. 

For a finite index subgroup $\Gamma$ of $Sp(n,\Z)$ with $n\geq 2$, 
we denote by $A_{\rho}(\Gamma)$ the space of 
$V$-valued holomorphic functions $F$ with $F|_{\rho}[\gamma]=F$
for all $\gamma \in \Gamma$. 
When $n=2$, $\rho=\det^k \Sym^j$, we write 
$A_{\rho}(\Gamma)=A_{k,j}(\Gamma)$. 
For a holomorphic function $\Phi(\tau,z)$ 
of $H_2\times \C^2$, $g =\begin{pmatrix} A & B \\ C & D 
\end{pmatrix} \in Sp(2,\R)$, $X=([\lambda,\mu],\kappa)$
with $\lambda$, $\mu \in \R^2$ and $\kappa \in \R$, 
we write 
\begin{align}
& \label{jacobi3} (\Phi|_{k,1}[g])(\tau,z) 
 =\det(C\tau+D)^{-k}e(-z(C\tau+D)^{-1}C\,^{t}z))
F(g\tau, z(C\tau+D)^{-1}), 
\\
& \label{jacobi4} 
(\Phi|_{k,1}[X])(\tau,z)   = e(\lambda\tau\,^{t}\lambda+
2\lambda \,^{t}z+\kappa)\Phi(\tau,z+\lambda\tau+\mu). 
\end{align}
The space $J_{k,1}(\Gamma)$ of Jacobi forms of degree two of weight $k$ of index $1$ 
with respect to $\Gamma$ is defined to be the space of holomorphic functions  
$\Phi(\tau,z)$ of $(\tau,z)\in H_2\times\C^2$ such that 
$\Phi|_{k,1}[\gamma]=\Phi$ ($\gamma \in \Gamma$) and 
$\Phi|_{k,1}[X]=\Phi$ ($X=(\lambda,\mu,0)$ with $\lambda$, $\mu\in \Z^2$).
(By the Koecher principle in \cite{ziegler}, conditions at cusps are not needed.)

\subsection{Differential operators on Siegel modular forms and Jacobi forms}
Since various differential operators on Siegel modular forms 
play essential roles in this paper, we shortly review 
them from \cite{ibudiff, ibuvect}.
We fix a partition $n=n_1+n_2$ ($n_i\geq 1$) and 
a sequence of integers $\lambda=(\lambda_1,\ldots,\lambda_m)$ with 
$\lambda_i\geq \lambda_{i+1}\geq 0$ for 
$i=1$, \ldots, $m-1$.  
Such $\lambda$ is called a dominant integral weight.
We call the maximum index $i$ such that 
$\lambda_i\neq 0$ a depth of $\lambda$. 
When $n_1$, $n_2\geq \depth(\lambda)$, there exist 
irreducible representations $\rho_{n_i,\lambda}$ of $GL_{n_i}(\C)$ corresponding to 
$\lambda$ for $n_1$ and $n_2$.
We embed $(g_1,g_2)\in Sp(n_1,\R)\times Sp(n_2,\R)$ in $Sp(n,\R)$ by 
\[
(g_1,g_2)\rightarrow \iota(g_1,g_2)=
\begin{pmatrix} A_1 & 0 & B_1 & 0 \\ 0 & A_2 & 0 & B_2 \\ C_1 & 0 & D_1 & 0 \\
0 & C_2 & 0 & D_2 \end{pmatrix},
\quad g_i=\begin{pmatrix} A_i & B_i \\ C_i & D_i \end{pmatrix} \in Sp(n_i,\R).
\]
For $Z\in H_n$, we put 
\[
Z=\begin{pmatrix} Z_{11} & Z_{12} \\ ^{t}Z_{12} & Z_{22} \end{pmatrix}, \quad Z_{ii}\in H_{n_{i}}, \ Z_{12}
\in M_{n_1,n_2}(\C).
\]
For a fixed positive integer $k$, a partition $n=n_1+n_2$, and a dominant integral weight 
$\lambda$, we consider a holomorphic linear differential operator $\D$ with constant coefficients
such that for any holomorphic function $f$ of $H_n$ and any $g_i\in Sp(n_i,\R)$, 
we have 
\begin{equation}\label{diffrelation}
\Res_{Z_{12}=0}(\D(f|_{k}[\iota(g_1,g_2)]))=(\Res_{Z_{12}=0}(\D f))|_{\det^k\rho_{n_1,\lambda}}[g_1]
|_{\det^k\rho_{n_2,\lambda}}[g_2],
\end{equation}
where $\Res_{Z_{12}=0}$ means the restriction to the domain $Z_{12}=0$ and 
the second action operates on variables $Z_{11}$ and $Z_{22}$, respectively.
This means that if $f$ is a Siegel modular form of weight $k$, then $\Res_{Z_{12}=0}(\D f)$ is also a Siegel modular form of weight $\det^k \rho_{n_i,\lambda}$ for each 
variable $Z_{ii}$. 
Under the assumption $k\geq n_i$, such non-zero 
differential operator exists
uniquely up to constant.
For example, if $n_1=2$, $n_2=1$, $\lambda=(2,0)$, then 
$\rho_{2,\lambda}=\Sym^2$, $\rho_{1,\lambda}=\det^2$. 
In this case, an explicit shape of $D$ is written in 
\cite{ibudiff} p. 114, and we can also apply 
$\D$ on Jacobi forms of degree two as follows.
We embed $g\in Sp(2,\R)$ to $Sp(3,\R)$ as before and 
$X=[(\lambda,\mu),\kappa]
\in \R^4\times\R$ to $Sp(3,\R)$ as 
\[
\begin{pmatrix} 1_2 & 0 & 0 & ^{t}\mu \\
\lambda & 1 & \mu & \kappa \\
0 & 0 & 1 & -\,^{t}\lambda \\
0 & 0 & 0 & 1 \end{pmatrix}.
\]
We denote the set of all such $X$ by $\cH(\R)$
and call it the Heisenberg group.
The group generated by $\iota(g_1,1_2)$ 
with $g_1\in Sp(2,\R)$ and $\cH(\R)$ 
is defined to be a real Jacobi group $J(\R)$ of degree two
in $Sp(3,\R)$. 
We denote by $\cH(\Z)$ the subgroup of $\cH(\R)$ consisting of 
rational integer components. We call here the group $\Gamma^J$ 
generated by $\iota(\Gamma,1_2)$ and 
$\cH(\Z)$ as a Jacobi group of $\Gamma$.
For any function $\Phi(\tau,z)$ invariant by $\cH(\Z)$, 
and for any $\gamma \in Sp(2,\Z)$, the function $\Phi|_{k,1}[\gamma]$ is
also invariant by $\cH(\Z)$. Indeed, we have 
$g\cdot X=X_g\cdot g$ for $X_g=([\lambda,\mu]g^{-1},\kappa)$,
and if $\Phi|_{k,1}[X]=\Phi$, then 
we have $(\Phi|_{k,1}[g])|[X]=(\Phi|[X_g])|_{k,1}[g]=\Phi|_{k,1}[g]$. 

Now for a holomorphic function $\Phi(\tau,z)$ on $H_2\times \C^2$
and $w\in H_1$, we put $\fF(Z)=\Phi(\tau,z)e(\omega)$, 
For any element $\mathfrak{g}\in J(\R)\subset Sp(3,\R)$, 
we see that $\fF(Z)|_{k}[\mathfrak{g}]=\tilde{\Phi}(\tau,z)e(\omega)$ 
for some function $\tilde{\Phi}$ which does not depend on $\omega$.
We write $\tilde{\Phi}=\Phi|_{k,1}[\mathfrak{g}]$. 
For the differential operator $\D$ we defined above,
we have a differential operator $\D_J$ on holomorphic functions of $H_2\times \C^2$ 
such that  
\[
\Res_{Z_{12}=0}\D(\fF(Z))=e(\omega)\Res_{z=0}(\D_J\Phi(\tau,z)).
\]
By the defining property of $\D$, we have 
\[
\Res_{z=0}(\D_J(\Phi(\tau,z)|_{k,1}[g]))=
(\D_J\Phi)(\tau,0)|_k[g]
\]
for any $g \in Sp(2,\R)$. 
When $n=3$, $n_1=2$, $n_2=1$ and $\lambda=(2,0)$, 
the operator $\D_J$ is explicitly given (up to constant) by 
\begin{align}\label{jacobiddiff}
\D_J =& u_1^2
\left(\frac{1}{2}\frac{\p^2\Phi}{\p z_1^2}-\frac{2\pi i}{k}\frac{\p\Phi}{\p \tau_{11}}\right)
+u_1u_2\left(\frac{\p^2\Phi}{\p z_1\p z_2}-\frac{2\pi i}{k}\frac{\p \Phi}{\p \tau_{12}}\right)
\\
& +u_2^2\left(\frac{1}{2}\frac{\p^2\Phi}{\p z_2^2}
-\frac{2\pi i}{k}\frac{\p\Phi}{\p \tau_{22}}\right).
\notag
\end{align}
Defining $\xi_{k,2}$ by \eqref{taylordiff}, we have 
\[
\xi_{k,2}(\Phi)=\Res_{z=0}\D_{J}(\Phi).
\]
Later we use two more similar differential operators on functions of $H_2$ 
preserving automorphy under the restriction to $H_1\times H_1$.
One is $\p_{12}=\frac{1}{2\pi i}\frac{\p}{\p \tau_{12}}$ which sends 
weight $k$ to $(k+1,k+1)$ on $H_1\times H_1$ and another is 
\begin{equation}\label{EZdiff}
D_2=\frac{k}{2}\frac{\p^2}{\p \tau_{12}^2}-\frac{\p^2}{\p\tau_{11}\p\tau_{22}}
\end{equation}
which sends weight $k$ to $(k+2,k+2)$ on $H_1\times H_1$. 

We also introduce three differential operators of another type
(often called of Rankin-Cohen type).
These are operators which construct new modular forms starting with 
several given modular forms. The theoretical background is found in 
\cite{ibudiff} and the proofs are in 
\cite{ibuaoki, ibudiff, ibuvect, tsatoh}, so we do not repeat the 
details here.
Let $f_i(\tau)$ be functions on $H_2$ and $k_i$ be positive integers
for $i=1$, $2$, $3$, $4$.
For $\tau=(\tau_{ij})\in H_2$, we write 
\[
\p_{ij}=\frac{1}{2\pi i}\frac{\p}{\p \tau_{ij}}.
\]
We define 
\[
\{f_1,f_2\}=\sum_{1\leq i\leq j\leq 2}\left(k_1f_1\p_{ij}f_2-k_2f_2\p_{ij}f_1\right)u_{i}u_{j}.
\]
This is the operator defined in \cite{tsatoh}.
This operator has a property that  for any $g\in Sp(2,\R)$, we have 
\[
\{f_1|_{k_1}[g],f_2|_{k_2}[g]\} =\{f_1,f_2\}|_{\det^{k_1+k_2}\Sym^2}[g].
\]
In particular, if $f_i\in A_{k_i}(\Gamma)$ for any $\Gamma$, then 
$\{f_1,f_2\}\in A_{k_1+k_2,2}(\Gamma)$. 
The following operators were defined in \cite{ibuvect, ibuaoki}.
\begin{align}
\{f_1,f_2,f_3\}= & u_1^2\begin{vmatrix}
\p_{11}f_1 & \p_{11}f_2 & \p_{11}f_3 \\
\p_{12}f_1 & \p_{12}f_2 & \p_{12}f_3 \\
k_1f_1 & k_2f_2 & k_3f_3 
\end{vmatrix}
-2u_1u_2\begin{vmatrix} \p_{11}f_1 & \p_{11}f_2 & \p_{11}f_3 \\
k_1f_1 & k_2f_2 & k_3f_3 \\
\p_{22}f_1 & \p_{22}f_2 & \p_{22}f_3 \
\end{vmatrix}
\\ 
& +u_2^2\begin{vmatrix}
k_1f_1 & k_2f_2 & k_3f_3 \\
 \p_{12}f_1 & \p_{12}f_2 & \p_{12}f_3 \\
\p_{22}f_1 & \p_{22}f_2 & \p_{22}f_3 
\end{vmatrix},\notag
\\
\{f_1,f_2,f_3,f_4\} & =\begin{vmatrix} k_1f_1 & k_2f_2 & k_3f_3 & k_4f_4 \\
\p_{11}f_1 & \p_{11}f_2 & \p_{11}f_3 & \p_{11}f_4\\
\p_{12}f_1 & \p_{12}f_2 & \p_{12}f_3 & \p_{12}f_4 \\
\p_{22}f_1 & \p_{22}f_2 & \p_{22}f_3 & \p_{22}f_4
\end{vmatrix}.
\end{align}
For any $g\in Sp(2,\R)$, we have 
\begin{align*}
\{f_1|_{k}[g],f_2|_k[g],f_3|_k[g]\}& =\{f_1,f_2,f_3\}|_{k_1+k_2+k_3+1,2}[g], \\
\{f_1|_{k}[g],f_2|_k[g],f_3|_k[g],f_4[g]\}& = \{f_1,f_2,f_3,f_4\}|_{k_1+k_2+k_3+k_4+3}[g].
\end{align*}
In particular, if $f_i\in A_{k_i}(\Gamma)$ for $i=1$, $2$, $3$, $4$,
then we have $\{f_1,f_2,f_3\}\in A_{k_1+k_2+k_3+1,2}(\Gamma)$ and 
$\{f_1,f_2,f_3,f_4\}\in A_{k_1+k_2+k_3+k_4+3}(\Gamma)$. 

\subsection{Relation between Jacobi forms and Siegel modular forms}
For the readers' convenience, we 
review the property of the map $\xi(\Phi)$ for $\Phi\in J_{k,1}(\Gamma)$ 
from \cite{ibujacobi}.
For each $\nu=(\nu_1,\nu_2)\in (\Z/2)^2$, we define theta functions 
$\vartheta_{\nu}(\tau,z)$ on $H_2\times \C^2$ of the second kind by 
\[
\vartheta_{\nu}(\tau,z)=\sum_{p\in \Z^2}
e\left((p+\frac{\nu}{2}) \tau\,^{t}(p+\frac{\nu}{2})+2(p+\frac{\nu}{2})\,^{t}z\right).
\]
We have $\vartheta_{\nu}(\tau,-z)=\vartheta_{\nu}(\tau,z)$.
We write $\vartheta_{\nu}(\tau,0)=\vartheta_{\nu}(\tau)$.
It is well known that for any $\Phi \in J_{k,1}(\Gamma)$, 
there exists a holomorphic functions $c_{\nu}(\tau)$ such that 
\[
\Phi(\tau,z)=c_{00}(\tau)\vartheta_{00}(\tau,z)+
c_{01}\vartheta_{01}(\tau,z)+c_{10}(\tau)\vartheta_{10}(\tau,z)
+c_{11}(\tau)\vartheta_{11}(\tau,z).
\]
We put 
\begin{equation}\label{thetamat}
\Theta(\tau)=\begin{pmatrix}
\vartheta_{00}(\tau) & \vartheta_{01}(\tau) & \vartheta_{10}(\tau) & \vartheta_{11}(\tau) \\
\p_{11}\vartheta_{00}(\tau) & \p_{11}\vartheta_{01}(\tau) & \p_{11}\vartheta_{10}(\tau) & 
\p_{11}\vartheta_{11}(\tau) \\
\p_{12}\vartheta_{00}(\tau) & \p_{12}\vartheta_{01}(\tau) & \p_{12}\vartheta_{10}(\tau) & 
\p_{12}\vartheta_{11}(\tau) \\
\p_{22}\vartheta_{00}(\tau) & \p_{22}\vartheta_{01}(\tau) & \p_{22}\vartheta_{10}(\tau) & 
\p_{22}\vartheta_{11}(\tau) \\
\end{pmatrix}.
\end{equation}

We denote the Taylor expansion of $\Phi$ along $z=0$ as in 
\eqref{taylor}. Since 
\[
\frac{1+\delta_{ij}}{4(2\pi i)^2}\frac{\p^2\vartheta_{\nu}(\tau,z)}{\p z_i\p z_j}=
\frac{1}{2\pi i}\frac{\p \vartheta_{\nu}(\tau,z)}{\p \tau_{ij}},
\]
we have 
\begin{equation}\label{thetalin}
\Theta(\tau)\begin{pmatrix} c_{00}(\tau) \\ c_{01}(\tau) \\ c_{10}(\tau) \\ c_{11}(\tau)
\end{pmatrix}
=
\begin{pmatrix} f_0(\tau) \\ \frac{1}{2(2\pi i)^2}f_{20}(\tau) \\
\frac{1}{2(2\pi i)^2}f_{11}(\tau) \\ \frac{1}{2(2\pi i)^2}f_{02}(\tau)
\end{pmatrix}.
\end{equation}
Here $\det(\Theta(\tau))$ is not identically $0$.
So $c_{\nu}(\tau)$
 and hence $\Phi$ is determined uniquely by 
$f_0(\tau)$, $f_{ij}(\tau)$ and the map 
$\xi:J_{k,1}(\Gamma)\rightarrow A_k(\Gamma)\times A_{k,2}(\Gamma)$ is injective.
The problem is the image of this map.
We fisrt prove the following lemma.
\begin{lemma}\label{merojacobi}
For any $(f_0,h)\in A_k(\Gamma)\times A_{k,2}(\Gamma)$, we have 
a meromorphic Jacobi form $\Phi$ of weight $k$ of index $1$ with respect to 
$\Gamma$ such that $\xi(\Phi)=(f_0,h)$.
\end{lemma}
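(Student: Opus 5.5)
We will build $\Phi$ directly from the theta decomposition. Given $(f_0,h)$, write $h=h_{20}u_1^2+h_{11}u_1u_2+h_{02}u_2^2$. By \eqref{taylordiff}, the coefficients $f_{ij}$ that $\Phi$ must have are forced:
\[
f_{20}=h_{20}+\tfrac{2\pi i}{k}\tfrac{\p f_0}{\p\tau_{11}},\quad
f_{11}=h_{11}+\tfrac{2\pi i}{k}\tfrac{\p f_0}{\p\tau_{12}},\quad
f_{02}=h_{02}+\tfrac{2\pi i}{k}\tfrac{\p f_0}{\p\tau_{22}}.
\]
Since $\det\Theta(\tau)\not\equiv 0$, we then solve \eqref{thetalin} for $(c_{00},c_{01},c_{10},c_{11})$ using $\Theta(\tau)^{-1}$. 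These $c_\nu$ are meromorphic on $H_2$, and we put $\Phi=\sum_\nu c_\nu(\tau)\vartheta_\nu(\tau,z)$. Each $\vartheta_\nu(\tau,z)$ satisfies \eqref{jacobi2}, so $\Phi$ is meromorphic and invariant under $\cH(\Z)$. By construction its Taylor coefficients up to order two are $f_0,f_{ij}$, hence $\xi(\Phi)=(f_0,h)$.

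It remains to show $\Phi|_{k,1}[\gamma]=\Phi$ for $\gamma\in\Gamma$. Put $\Psi=\Phi|_{k,1}[\gamma]-\Phi$. By the remark on $g\cdot X=X_g\cdot g$, the function $\Phi|_{k,1}[\gamma]$ is again $\cH(\Z)$-invariant. Any meromorphic (in $\tau$) function satisfying \eqref{jacobi2} for index $1$ is a combination $\sum_\nu c'_\nu(\tau)\vartheta_\nu(\tau,z)$ with meromorphic $c'_\nu$. This follows from the usual Fourier expansion in $z$: periodicity in $\mu$ and the quasi-periodicity in $\lambda$ force the coefficients to depend only on $p$ modulo $2\Z^2$. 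So $\Psi=\sum_\nu c'_\nu\vartheta_\nu$.

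Next we compute $\xi(\Psi)$. Since $\xi_0$ and $\xi_{k,2}=\Res_{z=0}\D_J$ intertwine the slash actions, we get
\[
\xi_0(\Phi|_{k,1}[\gamma])=f_0|_k[\gamma]=f_0,\qquad
\xi_{k,2}(\Phi|_{k,1}[\gamma])=h|_{\det^k\Sym^2}[\gamma]=h.
\]
For $\xi_0$ this is just evaluation at $z=0$ of \eqref{jacobi3}. For $\xi_{k,2}$ it is the stated property of $\D_J$, whose proof is formal and holds for meromorphic $\Phi$ as well. Hence $\xi(\Psi)=0$. Undoing \eqref{taylordiff}, this means the Taylor coefficients of $\Psi$ of order $0$ and $2$ vanish. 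By \eqref{thetalin} applied to $\Psi$, $\Theta(\tau)\,{}^t(c'_\nu)=0$, and invertibility of $\Theta$ on a dense open set gives $c'_\nu=0$. Therefore $\Psi=0$.

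The main point needing care is that the intertwining property of $\D_J$ and the theta decomposition, both stated for holomorphic functions, persist for meromorphic coefficients. Both are local algebraic or formal identities away from the polar divisor, so they extend by analytic continuation; this is where I would spend the verification effort.
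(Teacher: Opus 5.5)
Your proposal is correct and follows the paper's proof: the same construction of $f_{ij}$ and $c_\nu$ via \eqref{thetalin}, and the same uniqueness argument, in which $\xi(\Phi|_{k,1}[\gamma])=\xi(\Phi)$ forces $\Phi|_{k,1}[\gamma]=\Phi$. The only minor difference is how you show $\Phi|_{k,1}[\gamma]$ is again a combination of the $\vartheta_\nu(\tau,z)$: you use its $\cH(\Z)$-invariance and the Fourier expansion in $z$, whereas the paper reads this off directly from the theta transformation formula with the constant matrix $R(g)$; either justification works.
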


Here more precisely, a meromorphic Jacobi form $\Phi$ means that
there exist a meromorphic functions $c_{\nu}(\tau)$ such that 
\[
\Phi(\tau,z)=\sum_{\nu\in (\Z/2\Z)^2}c_{\nu}(\tau)\vartheta_{\nu}(\tau,z)
\]
and $\Phi|_{k,1}[\gamma]=\Phi$, $\Phi|[X]=\Phi$ for any $\gamma \in \Gamma$ 
and $X\in \cH(\Z)\cong (\Z)^4\cdot \Z$. 

\begin{proof}
For any $f_0\in A_k(\Gamma)$ and $h=h_{20}u_2^2+h_{11}u_1u_2+h_{02}u_2^2 
\in A_{k,2}(\Gamma)$, we define $f_{ij}(\tau)$ by 
\[
f_{ij}(\tau)=h_{ij}+\frac{2\pi i}{k}\frac{\p f_0}{\p\tau_{ij}}.
\]
Define $c_{\nu}(\tau)$ as a solution of the simultaneous equation 
\eqref{thetalin} and define 
\[
\Phi(\tau,z)=\sum_{\nu\in (\Z/2\Z)^2}c_{\nu}(\tau)\vartheta_{\nu}(\tau,z).
\]
Here obviously $c_{\nu}(\tau)$ are meromorphic functions such that 
$\det(\Theta)c_{\nu}(\tau)$ is holomorphic. 
For any $X \in \cH(\Z)$, we have $\vartheta_{\nu}(\tau,z)|[X]=
\vartheta_{\nu}(\tau,z)$ so we have $\Phi|[X]=\Phi$.
Now we prove $\Phi$ is invariant by the action of $\Gamma$.
We shortly write $\vartheta_{\nu}=\vartheta_{\nu}(\tau)$. 
For any $g\in Sp(2,\Z)$, we fix a branch of $\det(C\tau+D)^{1/2}$ and 
we define $f|_{1/2}[g]$ in the natural way for any function $f$ on $H_2$. 
Then it is well-known that 
there exists a constant matrix $R(g)$ such that 
\[
(\vartheta_{00}|_{1/2}[g],\vartheta_{01}|_{1/2}[g],\vartheta_{10}|_{1/2}[g],\vartheta_{11}|_{1/2}
[g])
=
(\vartheta_{00},\vartheta_{01},\vartheta_{10},\vartheta_{11})R(g).
\]
This means that there exists some linear combinations $c_{\nu}^{g}(\tau)$
of $c_{\mu}|_{k-1/2}[g]$  ($\mu \in (\Z/2\Z)^2$) 
such that 
\[
\Phi|_{k,1}[g]=\sum_{\nu\in (\Z/2\Z)^2}c_{\nu}^{g}(\tau)\vartheta_{\nu}(\tau,z).
\]
Now we would like to show that $\Phi|_{k,1}[\gamma]=\Phi$ for $\gamma 
\in \Gamma$. 
For any $g \in Sp(2,\R)$, we denote the first several Taylor coefficients of $\Phi|_{k,1}[g]$ along $z=0$ by $f_0^g$, $f_{ij}^g$ as 
\[
\Phi|_{k,1}[g]=f_0^g(\tau)+f_{20}^g(\tau)z_1^2+f_{11}^g(\tau)z_1z_2+
f_{02}^{g}(\tau)z_2^2+\cdots
\]
Then by definition we have 
\begin{align*}
\xi_0(\Phi|_{k,1}[g]) = & f_0^g, \\
\xi_2(\Phi|_{k,1}[g]) = & f_{20}^g(\tau)u_1^2+f_{11}^g(\tau)u_1u_2+f_{02}^g(\tau)u_2^2
\\ & -\frac{2\pi i}{k}\biggl(\frac{\p f_0^g}{\tau_{11}}u_1^2+\frac{\p f_0^g}{\p \tau_{12}}u_1u_2
+\frac{\p f_0^g}{\p \tau_{22}}u_2^2\biggr).
\end{align*}
By the property of the differential operators we used, 
we have $f_0^g=\xi_{0}(\Phi|_{k,1}[g])=\xi_0(\Phi)|_{k}[g]=f_0|_k[g]$ and 
$\xi_{2,k}(\Phi|_{k,1}[g])=(\xi_{k,2}\Phi)|_{k,2}[g]=h|_{k,2}[g]$ for any $g \in Sp(2,\R)$.
Since $f_0\in A_k(\Gamma)$ and $h\in A_{k,2}(\Gamma)$, 
we have $f_0|_{k}[\gamma]=f_0$ and $h|_{k,2}[\gamma]=h$ for any 
$\gamma \in \Gamma$. So we have $f_0^{\gamma}=f_0$, and 
\begin{align*}
h & = f_{20}(\tau)u_1^2+f_{11}(\tau)u_1u_2+f_{02}(\tau)u_2^2
-\frac{2\pi i}{k}\biggl(\frac{\p f_0}{\p\tau_{11}}u_1^2+\frac{\p f_0}{\p \tau_{12}}u_1u_2
+\frac{\p f_0}{\p \tau_{22}}u_2^2\biggr)
\\ & = 
f_{20}^{\gamma}(\tau)u_1^2+f_{11}^{\gamma}(\tau)u_1u_2+f_{02}^{\gamma}(\tau)u_2^2
\\ & -\frac{2\pi i}{k}\biggl(\frac{\p f_0^{\gamma}}{\p \tau_{11}}u_1^2+
\frac{\p f_0^{\gamma}}{\p \tau_{12}}u_1u_2
+\frac{\p f_0^{\gamma}}{\p \tau_{22}}u_2^2\biggr).
\end{align*} 
So we have $f_{ij}^{\gamma}(\tau)=f_{ij}(\tau)$. 
So $c_{\nu}(\tau)$ and $c_{\nu}^{\gamma}(\tau)$ are the solution of 
the same simultaneous equation \eqref{thetalin}, so we have 
$c_{\nu}^{\gamma}=c_{\nu}$.
So we have $\Phi|_{k,1}[\gamma]=\Phi$ for any $\gamma \in \Gamma$. 
\end{proof}

Next we consider conditions that $(f_0,h)\in A_k(\Gamma)\times A_{k,2}(\Gamma)$ 
is in the image of holomorphic Jacobi forms in $J_{k,1}(\Gamma)$. 
By Lemma \ref{merojacobi}, the only point is a condition that the above constructed 
Jacobi form is holomorphic. To see this we see $\Theta$ more carefully.
Let $\chi_5$ be the unique Siegel modular form of weight $5$ of level $1$ 
with multiplier whose Fourier expansion starts as 
\[
\chi_5=-e^{\pi i(\tau_{11}+\tau_{22}+\tau_{12})}+\cdots.
\]
It is well known that $\det(\Theta)=\chi_5/4$. 
Here in the standard fundamental domain $\cF$ of 
$Sp(2,\Z)$ in $H_2$, the Siegel modular form $\chi_5$ has zeros of order $1$ only on 
the diagonals (\cite{freitag}). 
 For any function $f(\tau)$ on $H_2$, we write 
\[
W f=f\begin{pmatrix} \tau_{11} & 0 \\ 0 & \tau_{22}\end{pmatrix}
\]
and call $W$ a Witt operator.
We put 
\[
\Gamma_0=\iota(SL_2(\Z)\times SL_2(\Z)).
\]
For $h\in A_{k,2}(\Gamma)$ and $g \in Sp(2,\R)$, we define 
$h_{ij}^g$ by 
\[
h|_{k,2}[g]=h_{20}^{g}u_1^2+h_{11}^{g}u_1u_2+h_{02}^gu_2^2.
\]
\begin{proposition}\label{propwittcondition}
(1) An element $(f_0,h)\in A_k(\Gamma)\times A_{k,2}(\Gamma)$ is in the image $\xi(J_{k,1}(\Gamma))$ if and only if 
\begin{equation}\label{wittcondition}
W\left(h_{11}^g+\frac{2\pi i}{k}\frac{\p(f_0|[g])}
{\p \tau_{12}}\right)=0
\end{equation}
for any $g\in Sp(2,\Z)$.
\\
(2) The condition (1) is satisfied if it is satisfied for all representatives $g$ of 
$\Gamma\backslash Sp(2,\Z)/\Gamma_0$
\end{proposition}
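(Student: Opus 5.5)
The plan is to read off holomorphy of the meromorphic Jacobi form constructed in Lemma \ref{merojacobi} from the local structure of $\Theta$ along the zero divisor of $\det\Theta=\chi_5/4$.

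\emph{Reduction to the coefficients $c_\nu$.} Write $\Phi=\sum_\nu c_\nu(\tau)\vartheta_\nu(\tau,z)$ with $c_\nu=\Theta^{-1}\,{}^t(f_0,\ast,\ast,\ast)$ as in \eqref{thetalin}. For fixed $\tau$ the four functions $z\mapsto\vartheta_\nu(\tau,z)$ are linearly independent. Hence $\Phi$ is holomorphic if and only if all $c_\nu$ are holomorphic. Since $\det(\Theta)c_\nu$ is holomorphic, the only possible poles lie on the zero set of $\chi_5$. In $H_2$ this zero set is the union of the $Sp(2,\Z)$-translates $g^{-1}\{\tau_{12}=0\}$, and $\chi_5$ vanishes to order one there.

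To examine the component $g^{-1}\{\tau_{12}=0\}$, I replace $\Phi$ by $\Phi|_{k,1}[g]$. This function again has a theta decomposition with coefficients $c^g_\nu$, which are constant linear combinations of $c_\mu|[g]$. Its Taylor data are $f_0|_k[g]$ and $h|_{k,2}[g]$, by the transformation property of $\xi$ used in the proof of Lemma \ref{merojacobi}. So it suffices to decide when $c^g_\nu$ is holomorphic near generic points of $\tau_{12}=0$.

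\emph{Local analysis on the diagonal.} At $\tau_{12}=0$ we have $\vartheta_\nu(\tau,z)=\theta_{\nu_1}(\tau_{11},z_1)\theta_{\nu_2}(\tau_{22},z_2)$. By the heat equation, $\p_{12}\vartheta_\nu(\tau)$ is a constant times $\p_{z_1}\p_{z_2}\vartheta_\nu(\tau,z)|_{z=0}$. This vanishes on the diagonal, because each one-variable theta is even in $z_i$. Thus the third row of $\Theta$ is divisible by $\tau_{12}$; write it as $\tau_{12}R(\tau)$. Let $\Theta'$ be $\Theta$ with the third row replaced by $R$. Then $\det\Theta'=\det\Theta/\tau_{12}$, which is nonzero at generic diagonal points because $\chi_5$ has only a simple zero there.

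The third component of the right-hand side of \eqref{thetalin} is, up to a constant, $f^g_{11}=h^g_{11}+\frac{2\pi i}{k}\p(f_0|[g])/\p\tau_{12}$. If $Wf^g_{11}=0$, write $f^g_{11}=\tau_{12}q$ with $q$ holomorphic. The system then becomes $\Theta' c^g=(\ast,\ast,q,\ast)$ with $\Theta'$ invertible, so $c^g$ is holomorphic near generic diagonal points. Conversely, if $c^g$ is holomorphic, the third equation $\tau_{12}R\,c^g=\mathrm{const}\cdot f^g_{11}$ forces $Wf^g_{11}=0$.

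The remaining bad set consists of the intersections of different components and the non-generic points of the diagonal. It is an analytic set of codimension $\ge 2$, so Riemann's extension theorem (Hartogs) removes the singularities there. Running over all $g\in Sp(2,\Z)$ gives (1). I expect the main care to be needed precisely here: checking that $\det\Theta'$ is generically nonzero, which uses the simple order of vanishing of $\chi_5$, and justifying the codimension-two step.

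\emph{Proof of (2).} First replace $g$ by $\gamma g$ with $\gamma\in\Gamma$. Then $f_0|[\gamma g]=f_0|[g]$ and $h|[\gamma g]=h|[g]$, so the condition for $\gamma g$ is literally the condition for $g$.

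Next replace $g$ by $g\delta$ with $\delta=\iota(g_1,g_2)\in\Gamma_0$, which preserves the diagonal. The quantity $f^g_{11}$ is, up to a constant, $\Res_{z=0}\p_{z_1}\p_{z_2}(\Phi|[g])$, equivalently the $u_1u_2$-part of $\D_J$. This holds at least where $\Phi|[g]$ is holomorphic, and in general formally, since only Taylor data enter. By the $\p_{12}$-type restriction property, $W$ of it transforms under $\iota(g_1,g_2)$ as a form of weight $(k+1,k+1)$ on $H_1\times H_1$. Therefore $W f^{g\delta}_{11}$ is, up to automorphy factors, $W f^g_{11}$ composed with $(g_1,g_2)$, and one vanishes exactly when the other does. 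Hence the condition depends only on the double coset $\Gamma g\Gamma_0$.
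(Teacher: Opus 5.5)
Your proposal is correct and follows essentially the same route as the paper. For (1), you reduce holomorphy of the meromorphic form of Lemma~\ref{merojacobi} to holomorphy of the $c^g_\nu$ near the diagonal, where the third row of $\Theta$ vanishes and $\det\Theta=\chi_5/4$ has a simple zero. Factoring $\tau_{12}$ out of that row is equivalent to the paper's cofactor argument, and your Riemann-extension step across the codimension-two intersections makes precise what the paper does by working on $\cF$. For (2), you use the same left-$\Gamma$ / right-$\Gamma_0$ invariance, based on the weight $(k+1,k+1)$ behaviour of the Witt restriction. One small slip: $f^g_{11}$ is not the $u_1u_2$-part of $\D_J$ (that part gives $h^g_{11}$). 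This is harmless, since $W(h^g_{11})$ and $W(\p_{12}(f_0|[g]))$ both transform with weight $(k+1,k+1)$.
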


\begin{proof}
Since $\vartheta_{\nu}(\tau)$ are even function of $\tau_{12}$,  
we have $W(\p_{12}\vartheta_{\nu})=0$. So all the components of 
the third row of $\Theta$ vanishes under $W$.
So if $c_{\nu}^{g}(\tau)$ are holomorphic, then $W(f_{11}^g)=0$. This means  
that the condition \eqref{wittcondition} is necessary.
On the contrary, assume that \eqref{wittcondition} is satisfied for all $g\in Sp(2,\Z)$.
Then we have  
\[
\begin{pmatrix} c_{00}^{g} \\ c_{01}^g \\ c_{10}^g \\ c_{11}^g \end{pmatrix}
=
\Theta^{-1}
\begin{pmatrix} f_0^g \\ f_{20}^g/2(2\pi i)^2 \\ f_{11}^g/2(2\pi i)^2  \\
f_{02}^g /2(2\pi i)^2 
\end{pmatrix}.
\]
Since the first, second and fourth column of the cofactor matrix of $\Theta$ 
vanishes under $W$, 
all the components of $\det(\Theta)^{-1} \Theta$ 
except for the third column are holomorphic on $\cF$ . 
Since we assumed that $W(f_{11}^g)=0$, this means that 
$c_{\nu}^g(\tau)$ is also holomorphic on $\cF$. This means that 
$\Phi|_{k,1}[g]$ is holomorphic on $\cF\times \C^2$ for any $g \in Sp(2,\Z)$.
So $\Phi$ is holomorphic on $(\tau,z)\in g\cF\times \C^2$ for all $g\in Sp(2,\Z)$ and hence on the whole $H_2\times \C^2$. So we proved (1).
Now take $g \in Sp(2,\Z)$ written as $g=\gamma g_0 \gamma_0$ 
for some $\gamma\in \Gamma$,$\gamma_0\in \Gamma_0$.
We have $h|[\gamma]=h$ and $f_0|[\gamma]=f_0$. 
So $h^{g}=h|[g_0][\gamma_0]$ and $f^{g}=f_0|[g_0]|[\gamma_0]$.
We write $\gamma_0=\iota\biggl(\begin{pmatrix} a_1 & b_1 \\ c_1 & d_1 
\end{pmatrix},\begin{pmatrix} a_2 & b_2 \\ c_2 & d_2 \end{pmatrix}\biggr)$.
Then by definition we have
\[
(h|[g_0][\gamma_0])_{11}=(h|[g_0])_{11}(c_1\tau_{11}+d_1)^{k+1}(c_2\tau_{22}+d_2)^{k+1}.
\]
Since $\p_{12}$ is a differential operator well behaved to the restriction, we have 
\[
W(\frac{\p (f_0|[g_0])|[\gamma_0])}{\p \tau_{12}})
=
W(\frac{\p (f_0|[g_0])}{\p \tau_{12}})|[\gamma_0]
=
W(\frac{\p (f_0^{g_0})}{\p \tau_{12}})(c_1\tau_{11}+d_1)^{k+1}(c_2\tau_{22}+d_2)^{k+1}.
\]
So $(f_0,h)$ satisfies the condition in (1) for $g_0$, then it satisfies the 
condition also for $g$.
\end{proof}

\subsection{Witt operator and plus and minus part}
Now we will see more precisely how the plus and minus part 
of $J_{k,1}(\Gamma)$ behaves under 
the Witt operator.
We put 
\[
I=\begin{pmatrix} 1 & 0 & 0 & 0 \\ 
0 & -1 & 0 & 0 \\ 0 & 0 & 1 & 0 \\ 0 & 0 & 0 & -1 \end{pmatrix}
\]
as before. 
Then for $\tau\in H_2$, we have 
$I\tau=\begin{pmatrix} \tau_{11} & -\tau_{12} \\-\tau_{12}
& \tau_{22} \end{pmatrix}$.
We consider the case $\Gamma=\Gamma_0(N)^{\psi}$ for $N=1$, $2$, $3$, $4$ or $\Gamma_0^0(2)^{\psi}$. 
By definition, for $F\in A_k(\Gamma)$ and $\Phi\in J_{k,1}(\Gamma)$, 
we have $F|_k[I]=(-1)^kF(I\tau)$ and $\Phi|_{k,1}[I]=(-1)^k \Phi(I\tau,(z_1,-z_2))$. 
So for $\varepsilon=\pm$, we have 
\begin{align*}
A_k^{\varepsilon}(\Gamma)& =\{F\in A_k(\Gamma);(-1)^k (F|_{k}[I])=\varepsilon F\}, \\
J_{k,1}^{\varepsilon}&=\{\Phi\in J_{k,1}(\Gamma); (-1)^k\Phi|_{k,1}[I]=\varepsilon \Phi\}
\end{align*}
as defined before. 
If $f \in A_k^{+}(\Gamma)$, 
then $f$ is an even function with respect 
to $\tau_{12}$ so we have 
$W(\p_{12}f)=0$. 
In the same way, for $\varepsilon=\pm$, we define 
\[
A_{k,2}^{\varepsilon}(\Gamma)=\{h\in A_{k,2}(\Gamma);
(-1)^kh|_{k,2}[I]=\varepsilon h\}.
\]
If we write $h(\tau)=h_{20}(\tau)u_1^2+h_{11}(\tau)u_1u_2+h_{02}(\tau)u^2$, then 
$h\in A^{\varepsilon}_{k,2}(\Gamma)$ means that 
\[
h_{20}(I\tau)=\varepsilon h_{20}(\tau),
\qquad
h_{11}(I\tau)=(-1) \varepsilon h_{11}(\tau), 
\qquad 
h_{02}(\tau)=\varepsilon h_{02}(\tau).
\]
So for $h\in A_{k.2}^+(\Gamma)$, we have $W(h_{11})=0$.

Assume that $f_i \in A_{k_i}^{+}(\Gamma)$ 
for $i=1$, $2$, $3$, $4$. 
Then considering $f_i|_{k_i}[I]$ and using the property of the 
bracket that preserves automorphy, 
we can show 
\begin{align*}
\{f_1,f_2\}\in A_{k_1+k_2,2}^{+}(\Gamma) ,& 
\quad W(\{f_1,f_2\}_{11})=0, \\
\{f_1.f_2,f_3\} \in A_{k_1+k_2+k_3,2}^{-}(\Gamma), & \quad W(\{f_1,f_2,f_3\}_{11})=0, \\
\{f_1,f_2,f_3,f_4\} \in A_{k_1+k_2+k_3+k_4+3}^{-}(\Gamma),  
& \quad W(\{f_1,f_2,f_3,f_4\})=0. 
\end{align*}
Here for $h\in A_{k,2}(\Gamma)$, we write the coefficient of $u_1u_2$ in $h$ by $h_{11}$. 

By the way, note 
that we have the following Jacobi type identities, which will be 
often used.
\begin{align}
& k_1f_1\{f_2,f_3\}+k_2f_2\{f_3,f_1\}+k_3f_3\{f_1,f_2\}=0,
\label{rel2}
\\ & 
k_1f_1\{f_2,f_3,f_4\}+k_2f_2\{f_3,f_4,f_1\}+k_3f_3\{f_4,f_1,f_2\}
+k_4f_4\{f_1,f_2,f_3\}=0, \label{rel3}\\
& (\p_{12}f_1)\{f_2,f_3,f_4\}_{11}
-(\p_{12}f_2)\{f_3,f_4,f_1\}_{11}+
(\p_{12}f_3)\{f_4,f_1,f_2\}_{11} \label{ref4}
\\ & -(\p_{12}f_4)\{f_1,f_2,f_3\}_{11}
=2\{f_1,f_2,f_3,f_4\},\notag
\end{align}
where we write the coefficient 
of $u_1u_2$ of $h\in A_{k,2}(\Gamma)$  as $h_{11}$. 

For $\Phi\in J_{k,1}^{\varepsilon}(\Gamma)$, 
we have $\xi_0(\Phi)\in A_k^{\varepsilon}(\Gamma)$ and 
$\xi_{k,2}(\Phi) \in A_{k,2}^{\varepsilon}(\Gamma)$.

Now assume that $\xi(\Phi)=(g,h)$. 
Here we should note that for any 
$f\in A_{k_1}^{+}(\Gamma)$ and 
$\Phi\in J_{k_2,1}^{\varepsilon}(\Gamma)$, the vectors 
$\xi(f\Phi)$ do not coincide with 
$(fg,fh)$. So to make the product $f\Phi$ reflect, 
for a vector $(g,h)\in A_{k_2}(\Gamma)
\times A_{k_2,2}(\Gamma)$ and $f \in A_{k_1}(\Gamma)$, 
we define a product by  
\begin{equation}\label{moduleaction}
f(g,h):=\left(fg,fh+\frac{(2\pi i)^2}{k_2(k_1+k_2)}\{f,g\}\right).
\end{equation}
If $\xi(\Phi)=(g,h)$ for some $\Phi\in J_{k,1}(\Gamma)$, 
then we have $\xi(f\Phi)=f\xi(\Phi)$. 
In particular, we have 
\begin{align}
& \frac{k_1k_2}{(2\pi i)^2}(f(g,0)-g(f,0)) =(0,\{f,g\}),
\label{rel1}\\
& \frac{k_2}{k_1+k_2}f(g,0)+\frac{k_1}{k_1+k_2}
g(f,0)=(fg,0), \label{rel4} \\
& f(0,h)=(0,fh). \label{rel5}
\end{align}

\subsection{Theta constants and its transformation formula}
We sometimes use theta constants as generators of modular forms, 
in particular for level $2$ and $4$. We also need their derivatives to consider 
the condition of Proposition \ref{wittcondition} on $\Gamma$ orbits in 
the $Sp(2,\Z)$ orbits of diagonals of $H_2$, so here we gather necessary 
transformation formulas of theta constants extracted from 
\cite{igusagraded1, igusaderivative}.

For any integral vectors $m=(m',m'')\in \Z^{2n}$ and $\tau\in H_n$, $z\in \C^n$, 
we define 
\[
\theta_{m}(\tau,z)=\sum_{p\in \Z^n}e\biggl(\frac{1}{2}\,^{t}\bigl(p+\frac{m'}{2}\bigr)
\tau\bigl(p+\frac{m'}{2}\bigr)
+\,^{t}\bigl(p+\frac{m'}{2}\bigr)\bigl(z+\frac{m''}{2}\bigr)\biggr).
\]
In particular, we write $\theta_m(\tau)=\theta_m(\tau,0)$
and call it a theta constant. 
For $m=(m',m'')$,$n=(n',n'')\in \Z^{2n}$ we have 
\[
\theta_{m+2n}(\tau)=(-1)^{^{t}m'n''}\theta_m(\tau).
\]
In particular, $\theta_m^2(\tau)$ depends only on $m\bmod 2$. Or 
if $m'\equiv 0\bmod 2$, then $\theta_m(\tau)$ depends only on $m\bmod 2$.

For any $M=\begin{pmatrix} A & B \\ C & D \end{pmatrix}\in Sp(n,\Z)$ 
and $m\in \Z^{2n}$, we define
\[
M\circ m=\begin{pmatrix} D & -C \\ -B & A \end{pmatrix} m
+\begin{pmatrix} (C\,^{t}D)_0 \\ (A\,^{t}B)_0 \end{pmatrix},
\]
where for an $n\times n$ matrix $X$, we denote by $X_0$ the $n$-dimensional 
vector consisting of diagonal components of $X$. 
It is well known that this induces an action on $(\Z/2\Z)^{2n}$(\cite{igusagraded1}), that is, we have 
\[
(M_1M_2)\circ m\equiv M_1(M_2\circ m) \bmod 2.
\]
For $m=(m',m'')\in \Z^{2n}$ and $M\in Sp(n,\Z)$ as above,
we define 
\[
\phi_m(M)=-\frac{1}{8}\biggl(\mbox{}^{t}m'\,^{t}BDm'+\,^{t}m''\,^{t}ACm''-2\,^{t}m'\,^{t}BCm''
-2\,^{t}(A^{t}B)_0(Dm'-Cm'')\biggr).
\]
\begin{proposition}[Igusa \cite{igusagraded1} p.227] \label{transformation}
For $M\in Sp(n,\Z)$, we fix a branch of $\det(C\tau+D)^{1/2}$.
Then for any $m\in \Z^{2n}$, 
there exists an eighth root of unity $\kappa(M)$  depending only on $M$ and 
the branch of  $\det(C\tau+D)^{1/2}$ such that 
\[
\theta_{M\circ m}(M\tau)=\kappa(M)e(\phi_m(M))(C\tau+D)^{1/2}\theta_m(\tau).
\]
\end{proposition}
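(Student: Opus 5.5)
This is Igusa's classical theta transformation formula. The plan is to reduce to generators of $Sp(n,\Z)$ and to prove a cocycle compatibility of the right-hand side.

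The first step is to check directly that the group generated by the three types
\[
t(S)=\begin{pmatrix} 1_n & S \\ 0 & 1_n\end{pmatrix}\ (S={}^tS\in M_n(\Z)),\qquad
u(U)=\begin{pmatrix} U & 0 \\ 0 & {}^tU^{-1}\end{pmatrix}\ (U\in GL_n(\Z)),\qquad
J=\begin{pmatrix} 0 & -1_n \\ 1_n & 0\end{pmatrix}
\]
is all of $Sp(n,\Z)$. This is standard, via generation by translations and $J$.

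The second step is to verify the formula for each generator separately.

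For $t(S)$, we substitute $\tau+S$ into the series. The extra factor is $e\bigl(\frac12{}^t(p+m'/2)S(p+m'/2)\bigr)$. Expanding with $S$ symmetric and integral, the cross term ${}^tpSm'/2$ together with the diagonal part $\frac12{}^tpSp\equiv \frac12{}^tpS_0\bmod 1$ is absorbed by shifting $m''\mapsto m''-Sm'-S_0$. This is exactly $t(S)\circ m$. The leftover constant is $e(\frac18{}^tm'Sm')$, which matches $e(\phi_m)$ with $\kappa=1$.

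For $u(U)$, the change of summation $p\mapsto {}^tUp$ gives the formula with $\kappa(M)=\det(U)^{1/2}$ up to the chosen branch. Note $(C\tau+D)^{1/2}=\det({}^tU^{-1})^{1/2}$, a constant.

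For $J$, we use Poisson summation in $p$: the Fourier transform of a Gaussian with characteristic $(m',m'')$ is the Gaussian for $-\tau^{-1}$ with characteristic $(m'',-m')$. This produces $\det(\tau/i)^{1/2}$ and the factor $e(-{}^tm'm''/4)$. That matches $\phi_m(J)=\frac14{}^tm'm''$ up to sign conventions, with $\kappa(J)$ an eighth root of unity, namely $\det(-i)^{1/2}$ adjusted.

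The third step, which I expect to be the main obstacle, is showing that validity for $M_1$ and $M_2$ implies validity for $M_1M_2$.

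Applying the formula twice gives
\[
\theta_{M_1\circ(M_2\circ m)}(M_1M_2\tau)=\kappa(M_1)\kappa(M_2)\,e\bigl(\phi_{M_2\circ m}(M_1)+\phi_m(M_2)\bigr)\,(\text{automorphy factor})\,\theta_m(\tau).
\]
Two corrections are then needed.

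First, $M_1\circ(M_2\circ m)$ agrees with $(M_1M_2)\circ m$ only modulo $2$. Using $\theta_{m+2n}=(-1)^{{}^tm'n''}\theta_m$ contributes a sign.

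Second, one must verify the identity
\[
\phi_{M_2\circ m}(M_1)+\phi_m(M_2)-\phi_m(M_1M_2)\equiv c(M_1,M_2)+(\text{that sign term}) \pmod 1,
\]
where $c(M_1,M_2)$ is independent of $m$. Then $\kappa(M_1M_2)$ is defined as $\kappa(M_1)\kappa(M_2)e(c)$ times the branch-ratio of the square roots, which is an eighth root of unity.

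This $m$-independence is a direct but tedious quadratic-form computation in $m$. It uses the symplectic relations ${}^tAC={}^tCA$, ${}^tBD={}^tDB$, ${}^tAD-{}^tCB=1$, together with the fact that for symmetric integral $X$ one has ${}^tvXv\equiv {}^tvX_0\pmod 2$.

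Finally, $\kappa$ is independent of $m$ by construction on generators and by the cocycle step. Its being an eighth root of unity follows since each generator's constant is one and $e(c)$ is an eighth root. Alternatively, one can specialize to $m=0$ and use $\theta_0^8$-type arguments.
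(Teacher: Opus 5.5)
The paper gives no proof of this proposition; it is quoted from Igusa. Your generators-plus-cocycle plan is a legitimate way to reconstruct it, but as written it leaves out the part that carries the content of the statement, so there is a genuine gap.

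\textbf{The unproved step.} The assertion that $\kappa(M)$ is independent of $m$ is precisely your Step 3 congruence. Put $\mu=(M_1M_2)\circ m$ and $2\nu=M_1\circ(M_2\circ m)-\mu$. This $\nu$ does not depend on $m$, because $m\mapsto M\circ m$ has linear part ${}^tM^{-1}$. You then need $\phi_{M_2\circ m}(M_1)+\phi_m(M_2)-\phi_m(M_1M_2)+\frac12{}^t\mu'\nu''$ to be independent of $m$ modulo $1$. Calling this ``direct but tedious'' leaves the proposition unproved.

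\textbf{Sign errors in Step 2.} Signs are exactly what is at stake here, and your generator checks get them wrong.
For $t(S)$ the correct identity is $\theta_{(m',\,m''-Sm'+S_0)}(\tau+S)=e\bigl(-\frac18{}^tm'Sm'+\frac14{}^tS_0m'\bigr)\theta_m(\tau)$, i.e.\ $e(\phi_m(t(S)))$ with $\kappa=1$. Your shift $m''-Sm'-S_0$ with constant $e(\frac18{}^tm'Sm')$ is off by an $m$-dependent sign, already for $n=1$, $S=1$, $m'=2$.
For $J$ one has $\phi_m(J)=-\frac14{}^tm'm''$. This matches your Poisson factor $e(-\frac14{}^tm'm'')$ exactly, with $\kappa(J)=(-i)^{n/2}$ up to the branch.
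Nor are all generator constants $1$: $\kappa(u(U))^2=\det U$.
An $m$-dependent sign left ``up to conventions'' would make $\kappa$ depend on $m$, so these cannot be waved through.

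\textbf{How to close the gap.} By induction on word length you only need the cocycle step with $M_1\in\{t(S),u(U),J\}$ and $M_2=M=\begin{pmatrix}A&B\\C&D\end{pmatrix}$ arbitrary. Put $x=Dm'-Cm''$, $y=-Bm'+Am''$, $\alpha=(C{}^tD)_0$, $\beta=(A{}^tB)_0$, so that $M\circ m=(x+\alpha,y+\beta)$.

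For $J$ one finds $J\circ(M\circ m)-(JM)\circ m=(-2\beta,0)$, so no sign arises. All quadratic and linear terms cancel identically:
\[
\phi_{M\circ m}(J)+\phi_m(M)-\phi_m(JM)=-\tfrac14\,{}^t\alpha\beta .
\]

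For $t(S)$ the discrepancy vector is $(0,w)$ with $w=S_0-S\alpha-(A{}^tDS+SC{}^tB+SC{}^tDS)_0$, which is even.
\begin{itemize}
\item The $m$-dependent part of $\phi_{M\circ m}(t(S))+\phi_m(M)-\phi_m(t(S)M)$ is $\frac14{}^twx$.
\item The sign $(-1)^{{}^t(x+\alpha)w/2}$ contributes another $\frac14{}^twx$ modulo $1$.
\item The total $\frac12{}^twx$ lies in $\Z$.
\end{itemize}
The case $u(U)$ is similar, using $(UX{}^tU)_0\equiv UX_0\bmod 2$ for symmetric integral $X$.

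\textbf{Alternative route.} You can instead follow Igusa and keep the variable $z$. Since ${}^t(C\tau+D)^{-1}(\tau m'+m'')=(M\tau)x+y$, the formula for general $m$ follows from the case $m=0$ by translating $z$ by the half-period $\frac12(\tau m'+m'')$. Then $\kappa(M)$ is independent of $m$ by construction, and $e(\phi_m(M))$ is just the exponential bookkeeping of that translation.
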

Here $\kappa(M)$ depends on a choice of branch of $\det(C\tau+D)^{1/2}$ but 
$\kappa(M)^2$ depends only on $M$. We can calculate $\kappa(M)^2$
as follows.
\begin{lemma}[Igusa \cite{igusagraded2} Lemma 4]\label{upperkappa}
For $M=\begin{pmatrix} A & B \\ 0 & D \end{pmatrix} \in Sp(n,\Z)$, or 
$\begin{pmatrix} A & 0 \\ C & D\end{pmatrix} \in Sp(n,\Z)$, we have 
\[
\kappa(M)^2=\det(D)=\det(A).
\]
\end{lemma}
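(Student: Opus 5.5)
The plan is to evaluate the transformation formula of Proposition \ref{transformation} at one convenient characteristic. There the theta series can be compared term by term, and $\kappa(M)$ can be read off directly. Since $\kappa(M)$ does not depend on $m$, a single $m$ suffices. I treat the upper triangular case first and then reduce the lower triangular case to it.

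\emph{Upper triangular case.} Let $M=\begin{pmatrix} A & B \\ 0 & D\end{pmatrix}$.
\begin{itemize}
\item Symplecticity gives $D={}^{t}A^{-1}$ with $A\in GL_n(\Z)$. Hence $\det D=\det A=\pm 1$, and $\det(C\tau+D)^{1/2}=(\det D)^{1/2}$ is a constant.
\item Take $m=0$. Then $\phi_0(M)=0$ and $M\circ 0=(0,(A\,{}^{t}B)_0)$.
\item We have $M\tau=(A\tau+B)D^{-1}=A\tau\,{}^{t}A+B\,{}^{t}A$. Put $S=A\,{}^{t}B$, which is symmetric because $M$ is symplectic.
\item Then
\[
\theta_{M\circ 0}(M\tau)=\sum_{p\in\Z^n}e\Bigl(\tfrac12\,{}^{t}({}^{t}Ap)\,\tau\,({}^{t}Ap)+\tfrac12\bigl({}^{t}pSp+{}^{t}pS_0\bigr)\Bigr).
\]
\item Expanding, ${}^{t}pSp+{}^{t}pS_0=\sum_i s_{ii}(p_i^2+p_i)+2\sum_{i<j}s_{ij}p_ip_j$, which is even. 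So the second exponential factor is $1$.
\item The substitution $q={}^{t}Ap$ is a bijection of $\Z^n$. Therefore $\theta_{M\circ 0}(M\tau)=\theta_0(\tau)$.
\item Comparing with Proposition \ref{transformation} gives $\kappa(M)(\det D)^{1/2}=1$. Squaring, $\kappa(M)^2=(\det D)^{-1}=\det D=\det A$.
\end{itemize}

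\emph{Lower triangular case.} Let $M=\begin{pmatrix} A & 0 \\ C & D\end{pmatrix}$ and $J=\begin{pmatrix} 0 & -1_n \\ 1_n & 0\end{pmatrix}$. I write $M=J^{-1}M'J$ with
\[
M'=JMJ^{-1}=\begin{pmatrix} D & -C \\ 0 & A\end{pmatrix},
\]
which is upper triangular, so $\kappa(M')^2=\det A$ by the first case.
\begin{itemize}
\item I would apply Proposition \ref{transformation} successively for $J$, $M'$ and $J^{-1}$, using the action property $(M_1M_2)\circ m\equiv M_1\circ(M_2\circ m) \bmod 2$. Chaining these gives a transformation formula for $M$.
\item Comparing it with the one for $M$ directly expresses $\kappa(M)$ as the product of $\kappa(J^{-1})$, $\kappa(M')$ and $\kappa(J)$. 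This holds up to the ratio of the chosen square-root branches of the automorphy factors and up to the characters $e(\phi_m(\cdot))$.
\item After squaring, the branch ambiguity becomes a ratio of determinants of automorphy factors. By the cocycle relation $\det(C_{12}\tau+D_{12})=\det(C_1M_2\tau+D_1)\det(C_2\tau+D_2)$ this ratio is exactly $1$.
\item The $J$-factors cancel: $\kappa(J)^2$ and $\kappa(J^{-1})^2$ multiply to $1$, which follows from the classical inversion formula $\theta_0(-\tau^{-1})=\det(\tau/i)^{1/2}\theta_0(\tau)$ applied twice. Since $J^2=-1_{2n}$, one can alternatively use $\kappa(-1)^2$ computed from the upper triangular case, which gives $(-1)^n$.
\end{itemize}
The result is $\kappa(M)^2=\kappa(M')^2=\det A=\det D$.

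\emph{Alternative for the lower case.} One can avoid the cocycle bookkeeping by working with $m=0$ directly. Apply Poisson summation to $\theta_0(A\tau(C\tau+D)^{-1})$ and use ${}^{t}AD$-type identities to reduce it to the classical inversion formula.

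The routine part is the parity computation in the upper case. The main obstacle will be the lower triangular case: tracking the eighth roots of unity $e(\phi_m)$ and the square-root branches through the conjugation by $J$ precisely enough that only a sign-free identity survives after squaring. I expect that choosing $m=0$ throughout, so that all $\phi$-terms reduce to the diagonal-vector corrections handled by the parity argument above, keeps this manageable.
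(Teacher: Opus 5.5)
Your upper triangular case is complete and correct. With $m=0$ you have $\phi_0(M)=0$ and $M\circ 0=(0,(A\,{}^{t}B)_0)$. The parity of ${}^{t}pSp+{}^{t}pS_0$ and the substitution $q={}^{t}Ap$ then give $\theta_{M\circ 0}(M\tau)=\theta_0(\tau)$, hence $\kappa(M)^2\det D=1$. The paper gives no proof of this lemma and only cites Igusa, so a self-contained argument like yours is a legitimate substitute.

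The lower triangular case has a step you assert rather than prove. You say the chained formula expresses $\kappa(M)$ through $\kappa(J^{-1})\kappa(M')\kappa(J)$ ``up to the characters $e(\phi_m(\cdot))$''. You then conclude $\kappa(M)^2=\kappa(M')^2$ without showing these characters are trivial. After squaring they are fourth roots of unity and need not equal $1$. For example, for $n=1$ and $T=\bigl(\begin{smallmatrix}1&1\\0&1\end{smallmatrix}\bigr)$ one has $\phi_{(1,0)}(T)=1/8$, and $(1,0)=M_2\circ 0$ for $M_2=\bigl(\begin{smallmatrix}1&0\\1&1\end{smallmatrix}\bigr)$, so a factor $e(1/4)$ appears in $\kappa(TM_2)^2$. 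So the check is necessary.

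It closes in a few lines. Your chaining-plus-cocycle argument is exactly Lemma \ref{generatekappa}(1): $\kappa(M_1M_2)^2=\kappa(M_1)^2\kappa(M_2)^2e(2\phi_{M_2\circ 0}(M_1))$. Its proof uses only Proposition \ref{transformation}, so there is no circularity, and squaring also removes the mod $2$ ambiguity in $(M_1M_2)\circ m$. Then:
\begin{itemize}
\item Since $J\circ 0=0$, you get $\kappa(M'J)^2=\kappa(M')^2\kappa(J)^2$.
\item Next, $M'J=\begin{pmatrix}-C&-D\\A&0\end{pmatrix}$, so $(M'J)\circ 0=(0,(C\,{}^{t}D)_0)$. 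Also $\phi_m(J^{-1})=-\frac14\,{}^{t}m'm''$, which vanishes when $m'=0$. Hence $\kappa(M)^2=\kappa(J^{-1})^2\kappa(M')^2\kappa(J)^2$.
\item Finally, again because $J\circ 0=0$, the same rule gives $1=\kappa(1_{2n})^2=\kappa(J^{-1}J)^2=\kappa(J^{-1})^2\kappa(J)^2$.
\end{itemize}
Therefore $\kappa(M)^2=\kappa(M')^2=\det A=\det D$. With this computation inserted your proof is complete, and the Poisson-summation alternative is not needed.
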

Here of course $\det(A)=\det(D)=\pm 1$ since $\det(M)=1$
. 
\begin{lemma}\label{generatekappa}
(1) For any $M_1$, $M_2 \in Sp(n,\Z)$, we have 
\[
\kappa(M_1M_2)^2=\kappa(M_1)^2\kappa(M_2)^2e(2\phi_{M_2\circ 0}(M_1)).
\]
(2) $Sp(n,\Z)$ is generated by matrices of the forms  
$\begin{pmatrix} a & b \\ 0 & d \end{pmatrix}$ and 
$\begin{pmatrix} a & 0 \\ c & d \end{pmatrix}$.
\end{lemma}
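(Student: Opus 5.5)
Part (1) follows from Proposition \ref{transformation} applied in two ways to the product $M_1M_2$. Part (2) reduces to the classical generation of $Sp(n,\Z)$ by translations, Levi elements and the involution $J_n$, once $J_n$ is written as a product of block-triangular matrices.

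\medskip\noindent\textbf{Part (1).} Take $m=0$ in Proposition \ref{transformation}. For $m=0$ every term in the definition of $\phi_m(M)$ is at least linear in $m'$ or $m''$, so $\phi_0(M)=0$ for every $M$.

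Write $M_i=\begin{pmatrix} A_i & B_i\\ C_i & D_i\end{pmatrix}$ and $M_1M_2=\begin{pmatrix} A & B\\ C & D\end{pmatrix}$. Computing $\theta_{(M_1M_2)\circ 0}(M_1M_2\tau)$ directly gives
\[
\kappa(M_1M_2)\det(C\tau+D)^{1/2}\theta_0(\tau).
\]
Alternatively, first apply the proposition to $M_1$ at the point $M_2\tau$ with characteristic $m=M_2\circ 0$, an honest integer vector given by the defining formula. This gives
\[
\theta_{M_1\circ(M_2\circ 0)}(M_1M_2\tau)=\kappa(M_1)\,e(\phi_{M_2\circ 0}(M_1))\det(C_1M_2\tau+D_1)^{1/2}\,\theta_{M_2\circ 0}(M_2\tau).
\]
Then apply the proposition to $M_2$ with $m=0$ to rewrite $\theta_{M_2\circ 0}(M_2\tau)$.

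The two characteristics $M_1\circ(M_2\circ 0)$ and $(M_1M_2)\circ 0$ agree modulo $2$, so by $\theta_{m+2n}=\pm\theta_m$ the two theta constants agree up to sign. To remove these signs and the ambiguity of the square-root branches, square both expressions. The cocycle relation $\det(C_1M_2\tau+D_1)\det(C_2\tau+D_2)=\det(C\tau+D)$ then gives
\[
\kappa(M_1M_2)^2\det(C\tau+D)\,\theta_0(\tau)^2=\kappa(M_1)^2\kappa(M_2)^2e(2\phi_{M_2\circ0}(M_1))\det(C\tau+D)\,\theta_0(\tau)^2 .
\]
Since $\theta_0$ is not identically zero, we may cancel and obtain (1).

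\medskip\noindent\textbf{Part (2).} I would invoke the classical fact that $Sp(n,\Z)$ is generated by three kinds of elements:
\[
\begin{pmatrix} 1_n & S\\ 0 & 1_n\end{pmatrix}\ (S={}^tS\ \text{integral}),\qquad
\begin{pmatrix} U & 0\\ 0 & {}^tU^{-1}\end{pmatrix}\ (U\in GL_n(\Z)),\qquad
J_n=\begin{pmatrix} 0 & -1_n\\ 1_n & 0\end{pmatrix}.
\]
This is due to Hua--Reiner, or follows from the Euclidean-type reduction of the bottom row blocks $(C,D)$ used in Siegel's theory. The first two kinds are already of the upper block-triangular form in the statement. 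For $J_n$, the same computation as for $SL_2(\Z)$, done blockwise, gives
\[
J_n=\begin{pmatrix} 1_n & -1_n\\ 0 & 1_n\end{pmatrix}\begin{pmatrix} 1_n & 0\\ 1_n & 1_n\end{pmatrix}\begin{pmatrix} 1_n & -1_n\\ 0 & 1_n\end{pmatrix},
\]
which is a product of the allowed types.

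\medskip\noindent\textbf{Main obstacle.} The only delicate point is the bookkeeping of characteristics in (1). One must use the specific integer vector $M_2\circ 0$, not merely its class modulo $2$, because $e(2\phi_m(M_1))$ can depend on the integral lift of $m$. The sign discrepancy between $M_1\circ(M_2\circ0)$ and $(M_1M_2)\circ0$ is removed by squaring. Part (2) is standard, and I would simply cite it together with the factorization of $J_n$.
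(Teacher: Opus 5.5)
Your proof is correct and matches the paper's. For (1), the paper likewise compares Proposition \ref{transformation} applied to $M_1M_2$ with its application to $M_1$ at $M_2\tau$ (with $m=M_2\circ 0$) followed by $M_2$, squares to pass to characteristics modulo $2$, and uses $\det(C_1M_2\tau+D_1)\det(C_2\tau+D_2)=\det(C\tau+D)$. For (2), the paper simply cites Igusa's Lemma 3, and your classical generators together with the block factorization of $J_n$ do the same job.

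One harmless slip: $e(2\phi_m(M))$ in fact depends only on $m \bmod 2$, since replacing $m$ by $m+2n$ changes $2\phi_m(M)$ by an integer, using the symmetry of ${}^tAC$ and ${}^tBD$. So the reason given in your last paragraph is wrong, although using the specific vector $M_2\circ 0$ is of course fine.
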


\begin{proof} The second assertion is nothing but Lemma 3 of \cite{igusagraded2}. 
The first assertion is given as follows.
For $M=\begin{pmatrix} A & B \\ C & D \end{pmatrix}$, we write 
$J(M,\tau)=\det(C\tau+D)$. 
By Proposition \ref{transformation}, we have 
\begin{align*}
\theta_{(M_1M_2)\circ 0}^2(M_1M_2\tau)& =\kappa(M_1M_2)^2J(M_1M_2,\tau),
\\ 
\theta_0^2(\tau)
\theta_{M_1\circ m}^2(M_1\tau) & = e(2\phi_m(M_1))\kappa(M_1)^2J(M_1,\tau)\theta_m^2(\tau).
\end{align*}
We put $m=M_2\circ 0$ and replace $\tau$ by $M_2\tau$ in the second equality,
Then we have  $\theta_{M_2\circ 0}^2(M_2\tau)=\kappa(M_2)^2J(M_2,\tau)\theta_0^2(\tau)$ and 
\begin{multline*}
\theta_{M_1\circ (M_2\circ 0)}^2(M_1M_2\tau) 
\\ =\kappa(M_1)^2\kappa(M_2)^2e(2\phi_{M_2\circ 0}(M_1))J(M_1,M_2\tau)J(M_2,\tau)\theta_0^2(\tau).
\end{multline*}
Since $J(M_1,M_2\tau)J(M_2,\tau)=J(M_1M_2,\tau)$, comparing 
with the first equality, we have $\kappa(M_1M_2)^2=\kappa(M_1)^2\kappa(M_2)^2e(2\phi_{M_2\circ 0}(M_1))$.
\end{proof}

The following lemma is called the Jacobi's derivative formula.
\begin{lemma}[Igusa \cite{igusaderivative}]
\begin{equation}\label{jacobiderivative}
\frac{\p\theta_{11}}{\p z}(\tau,0)=-\pi \theta_{00}(\tau)\theta_{01}(\tau)\theta_{10}(\tau).
\end{equation}
\end{lemma}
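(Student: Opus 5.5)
We argue in degree $n=1$, where $\theta_{m}(\tau,z)$ with $m=(m',m'')\in(\Z/2\Z)^2$ gives the four classical theta functions; $\theta_{11}$ is the unique odd one. The plan is to show that $F(\tau):=\frac{\p\theta_{11}}{\p z}(\tau,0)$ and $G(\tau):=\theta_{00}(\tau)\theta_{01}(\tau)\theta_{10}(\tau)$ are modular of the same weight $3/2$ with the same multiplier on $SL_2(\Z)$. Then $F/G$ is a holomorphic modular function of weight $0$, hence constant, and the constant is read off from the leading Fourier terms.

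\emph{Step 1 (transformation of $F$).} We first need the analogue of Proposition \ref{transformation} with the variable $z$ kept. In degree one it reads
\[
\theta_{M\circ m}\bigl(M\tau,z(c\tau+d)^{-1}\bigr)=\kappa(M)e(\phi_m(M))(c\tau+d)^{1/2}e\bigl(\tfrac{c z^2}{2(c\tau+d)}\bigr)\theta_m(\tau,z).
\]
Since $M\circ m$ preserves the parity of the characteristic, it fixes $m=(1,1)$ modulo $2$. We differentiate in $z$ at $z=0$; the Gaussian factor contributes nothing at first order because $\theta_{11}(\tau,0)=0$. This gives $F(M\tau)=\pm\kappa(M)e(\phi_{11}(M))(c\tau+d)^{3/2}F(\tau)$, the sign coming from $\theta_{m+2n}=\pm\theta_m$. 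Applying Proposition \ref{transformation} to the three even characteristics, which $M\circ$ permutes among themselves, gives $G(M\tau)=\kappa(M)^3\prod e(\phi_m(M))(c\tau+d)^{3/2}G(\tau)$, again up to a sign.

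\emph{Step 2 (the multipliers agree).} It suffices to compare the two multipliers on the generators $T=\begin{pmatrix}1&1\\0&1\end{pmatrix}$ and $S=\begin{pmatrix}0&-1\\1&0\end{pmatrix}$, using Lemma \ref{generatekappa}(2). For $T$ this is immediate from the $q$-expansions: both $F$ and $G$ are $q^{1/8}$ times a power series in $q$, where $q=e(\tau)$. For $S$, the classical formulas $\theta_{00}(-1/\tau)=\sqrt{\tau/i}\,\theta_{00}$, $\theta_{01}\leftrightarrow\theta_{10}$, and $\theta_{11}(-1/\tau,z/\tau)=-i\sqrt{\tau/i}\,e(z^2/2\tau)\theta_{11}(\tau,z)$ give the same factor $(\tau/i)^{3/2}$ up to an eighth root of unity, which has to be matched. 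This bookkeeping of eighth roots of unity is the step I expect to be the main obstacle.

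\emph{Step 3 (constancy and normalisation).} By the product formulas, $G$ has no zeros in $H_1$, so $F/G$ is holomorphic on $H_1$ and $SL_2(\Z)$-invariant. It is bounded at the cusp, since both $F$ and $G$ begin with a multiple of $q^{1/8}$, and it is then automatically bounded at all cusps. Hence $F/G$ is constant. For the constant: in $F$ the terms $p=0$ and $p=-1$ give $2\pi i\bigl(\tfrac12 e(\tfrac14)-\tfrac12 e(-\tfrac14)\bigr)q^{1/8}=-2\pi q^{1/8}$. In $G$ we have $\theta_{00}=1+\cdots$, $\theta_{01}=1+\cdots$ and $\theta_{10}=2q^{1/8}+\cdots$. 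So $F/G=-\pi$.

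If the multiplier comparison in Step 2 becomes too messy, I would instead use the Jacobi triple product directly. It expresses $\theta_{11}(\tau,z)$ as a constant times $q^{1/8}(\zeta^{1/2}-\zeta^{-1/2})\prod_{n\ge1}(1-q^n)(1-q^n\zeta)(1-q^n\zeta^{-1})$ with $\zeta=e(z)$; differentiating at $z=0$ gives $-2\pi q^{1/8}\prod_{n\ge1}(1-q^n)^3$. Combining the product formulas for the three even thetas gives $G=2q^{1/8}\prod_{n\ge1}(1-q^n)^3$, which proves \eqref{jacobiderivative} without any modularity argument.
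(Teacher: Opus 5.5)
The paper gives no proof of this lemma; it quotes it from Igusa and uses it only to obtain \eqref{1111derivative}. Your argument is correct and gives an independent, self-contained proof.

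\emph{Triple-product route.} This is the shortest version and is complete as it stands. You have $\theta_{11}(\tau,z)=i\,q^{1/8}(\zeta^{1/2}-\zeta^{-1/2})\prod_{n\ge1}(1-q^n)(1-q^n\zeta)(1-q^n\zeta^{-1})$, which gives $F=-2\pi q^{1/8}\prod(1-q^n)^3$. The formula $G=2q^{1/8}\prod(1-q^n)^3$ uses the three even product formulas together with Euler's identity $\prod_{n\ge1}(1+q^n)(1-q^{2n-1})=1$, which you should state explicitly.

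\emph{Modular route.} This also works, and the step you flag as the main obstacle is harmless. Differentiate your inversion formula for $\theta_{11}$ at $z=0$; the Gaussian factor drops out because $\theta_{11}(\tau,0)=0$. This gives $\tau^{-1}F(-1/\tau)=-i(\tau/i)^{1/2}F(\tau)$, i.e. $F(-1/\tau)=-i\tau\,(\tau/i)^{1/2}F(\tau)=(\tau/i)^{3/2}F(\tau)$. That is exactly the factor you get for $G$, with no leftover root of unity. Consequently $F/G$ is a function invariant under $T$ and $S$, which generate $SL_2(\Z)$. So your Step~1, with its undetermined signs and $\kappa(M)$, is not needed.

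For $T$, note that $\theta_{00}$ and $\theta_{01}$ are individually only series in $q^{1/2}$. The claim $G\in q^{1/8}\C[[q]]$ comes from $\theta_{00}(\tau+1)=\theta_{01}(\tau)$, $\theta_{01}(\tau+1)=\theta_{00}(\tau)$ and $\theta_{10}(\tau+1)=e(1/8)\theta_{10}(\tau)$.

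\emph{What your proof adds.} Compared with the bare citation, your computation verifies the sign and the constant $-\pi$ in the paper's own normalization $e\bigl(\frac12 x^2\tau+x(z+\frac{m''}{2})\bigr)$. This matters, since that constant feeds directly into \eqref{1111derivative} and into every numerical identity derived from it, such as $W(\p_{12}^2K_6)=W(Y_4Z_4)/2$. Your leading-term computation uses the correct exponent $\frac12(p+\frac12)^2\tau+\frac12(p+\frac12)$; the paper's display \eqref{der1} has lost the factor $\tau/2$ in the first term.
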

Here we have 
\begin{equation}\label{der1}
\frac{1}{2\pi i}\frac{\p\theta_{11}}{\p z}(\tau,0)=
\sum_{p\in \Z}\bigl(p+\frac{1}{2}\bigr)e\biggl(\bigl(p+\frac{1}{2}\bigr)^2+
\bigl(p+\frac{1}{2}\bigr)\frac{1}{2}\biggr).
\end{equation}
For $\tau=\begin{pmatrix}\tau_{11} & \tau_{12} \\ \tau_{12} & \tau_{22} 
\end{pmatrix}\in H_2$, 
we write 
\[
\p_{ij}=\frac{1}{2\pi i}\frac{\p}{\p \tau_{ij}}
\]
as before.

Taking the derivative directly from the definition,  we see 
\[
W(\p_{12}\theta_{1111})=\prod_{i=1}^{2}\sum_{p\in \Z}\bigl(p+\frac{1}{2}\bigr)e\biggl(
\bigl(p+\frac{1}{2}\bigr)^{2}\tau_{ii}+\frac{1}{2}\bigl(p+\frac{1}{2}\bigr)\biggr).
\]
Comparing this with \eqref{der1} and \eqref{jacobiderivative},
we have 
\begin{equation}\label{1111derivative}
W(\p_{12}\theta_{1111})
=-\frac{1}{4}\prod_{i=1}^{2}\theta_{00}(\tau_{ii})\theta_{01}(\tau_{ii})\theta_{10}(\tau_{ii}).
\end{equation}
This relation will be used later.

\section{Proof for level $1$ and $2$}\label{prooflevel1and2}
We have $I\in \Gamma$ for $\Gamma=Sp(2,\Z)$, $\Gamma_0(2)$, so 
for $f \in A_k(\Gamma)$, we have $f\in A_k^+(\Gamma)$ 
when $k$ is even and $f \in A_k^{-}(\Gamma)$ when 
$k$ is odd. 
Since the result for $Sp(2,\Z)$ has been written in \cite{ibujacobi}, we explain mainly 
the case $\Gamma_0(2)$ and sketch the case $Sp(2,\Z)$. 
We have $A^+(\Gamma_2)=\C[\varphi_4,\varphi_6,\chi_{10},\chi_{12}]$ and 
$A^{-}(\Gamma_2)=\chi_{35}A^+(\Gamma_2)$ where 
$\varphi_k$ is the Siegel Eisenstein series of weight $k$ whose constant term is $1$ and
$\chi_{10}$, $\chi_{12}$, $\chi_{35}$ are the unique cusp form of weight $10$, $12$, $35$
normalized as in \cite{ibuaoki}.
We have $A^+(\Gamma_0(2))=\C[X_2,Y_4,Z_4,K_6]$ and $A^-(\Gamma_0(2))=
\chi_{19}A^+(\Gamma_0(2))$ where
\begin{align*}
X_2 = & (\theta_{0000}^4+\theta_{0001}^4+\theta_{0010}^4+\theta_{0011}^4)/4, \\
Y_4= & (\theta_{0000}\theta_{0001}\theta_{0010}\theta_{0011})^2, \\
Z_4 = & (\theta_{0100}^4-\theta_{0110}^4)^2/16384, \\
K_6 = & (\theta_{0100}\theta_{0110}\theta_{1000}\theta_{1001}\theta_{1100}\theta_{1111})^2
/4096, \\
\chi_{19}= & \{X_2,Y_4,Z_4,K_6\}/512.
\end{align*}
These are Siegel modular forms of $\Gamma_0(2)$ of weight $2$, $4$, $4$, $6$, $19$ 
defined in \cite{ibuthesis, ibulevel3, ibuaoki}.
For any positive integer $N$, we 
define a principal congruence subgroup $\Gamma(N)$ by 
\[
\Gamma(N)=\{\gamma\in Sp(2,\Z);\gamma \equiv 1_4 
\bmod N\}.
\]
This is a normal subgroup of $Sp(2,\Z)$. 
We have $I\in \Gamma(2)$, 
so we have $gIg^{-1}\in \Gamma(2) \subset \Gamma_0(2)$
for any $g \in Sp(2,\Z)$. 
This means that for any $f_0\in A_k(\Gamma)$
, $h\in A_{k,2}(\Gamma)$  for $\Gamma=
\Gamma_0(2)$ or $\Gamma_2$,  
and for any $g\in Sp(2,\Z)$, 
we have $f_0|_k[g]|_{k}[I]=f_0|_{k}[g]$ and 
$h|_{k,2}[g]|_k[I]=h_{k,2}|_{k}[g]$. So if $f_0\in A_k^+(\Gamma)$
 (in this case the sign $+$ means that $k$ is even), 
then $f_0|_k[g]$ is an even function with respect to $\tau_{12}$ and hence we have 
\[
W\left(\frac{\p (f_0|_k[g])}{\p \tau_{12}}\right)=0.
\]
This means that for any $f_0 \in A_k^+(\Gamma)$, 
there exists a Jacobi form $\Phi\in J_{k,2}^{+}(\Gamma)$ 
such that 
\[
\xi(\Phi)=(\xi_0(\Phi),\xi_{k,2}(\Phi))=(f_0,0)\in A_k(\Gamma)
\times A_{k,2}(\Gamma).
\]
First we consider the case $\Gamma=\Gamma_0(2)$.
(The case $\Gamma=\Gamma_2$ will be explained later.) 
We write the Jacobi form corresponding 
 to $(X_2,0)$, $(Y_4,0)$, $(Z_4,0)$, $(K_6,0)$ by 
 $\psi_2$, $\psi_{4,1}$, $\psi_{4,2}$, $\psi_{6}$, respectively.
For any $F\in A_k^+(\Gamma_0(2))$, we have 
$f_i\in A^+(\Gamma_0(2))$ such that 
$F=f_1X_2+f_2Y_4+f_3Z_4+f_4K_6$, so if we put 
$\Phi_0=f_1\psi_2+f_2\psi_{4,1}+f_3\psi_{4,2}+f_4\psi_{6}\in J_{k,1}^+(\Gamma_0(2))$, 
then $\xi_0(\Phi_0)=F$. So for any $\Phi\in J_{k,1}^{+}(\Gamma_0(2))$, 
there exists $\Phi_0\in J_{k,1}^+(\Gamma_0(2))$ such that 
$\xi_0(\Phi-\Phi_0)=0$. Now 
we will determine $\Phi\in J_{k,1}^+(\Gamma_0(2))$ such that $\xi(\Phi)=(0,h)\in A_k(\Gamma_0(2))\times A_{k,2}^+(\Gamma_0(2))$. 
As we have seen, for any $F_1\in A_{k_1}^+(\Gamma)$, $F_2\in A_{k_2}^+(\Gamma)$, 
there exist $\Phi_i\in J_{k,1}^+$ such that 
$\xi(\Phi_i)=(F_i,0)$.
By \eqref{rel1}, we have 
\begin{align*}
\xi_0(F_1\Phi_2-F_2\Phi_1) & = 0, \\
\xi_{k_1+k_2,2}(F_1\Phi_2-F_2\Phi_1) & = \frac{(2\pi i)^2}{k_1k_2}\{F_1,F_2\}.
\end{align*}
Since $A_{*,2}^+(\Gamma_0(2)):=\oplus_{k=0}^{\infty}A_{2k,2}(\Gamma_0(2))$ is generated by $\{f_1,f_2\}$ for $f_1$, $f_2\in 
A^+(\Gamma_0(2))$ by \cite{aokivector}, we see that 
the map 
$\xi:J_{k,1}^+(\Gamma_0(2))\rightarrow A_k^+(\Gamma)\times A_{k,2}^+(\Gamma)$ is 
surjective. except for $k=0$. So we have 
\[
J_{k,1}^+(\Gamma_0(2)) =A^+(\Gamma_0(2))\psi_2+A^+(\Gamma_0(2))\psi_{4,1}
+A^+(\Gamma_0(2))\psi_{4,2}+A^+(\Gamma_0(2))\psi_{6}.
\]
Since the map $\xi$ is injective, we have 
\[
\dim J_{k,1}^+(\Gamma_0(2))=\dim A_k^+(\Gamma_{0}(2)) 
\oplus \dim A_{k,2}^+(\Gamma_0(2)).
\]
By \cite{ibuaoki, aokivector}, we have 
\begin{align*}
\sum_{k=0}^{\infty}\dim A_k^+(\Gamma_0(2))t^k=& \frac{1}{(1-t^2)(1-t^4)^2(1-t^6)}, \\
\sum_{k=1}^{\infty}\dim A_{k,2}^+(\Gamma_0(2))t^k=& 
\frac{2t^6+2t^8+t^{10}-2t^{12}-t^{14}+t^{16}}{(1-t^2)(1-t^4)^2(1-t^6)}.
\end{align*}
so we have 
\begin{align*}
\sum_{k=1}^{\infty}\dim J_{k,1}^+(\Gamma_0(2))t^k = &  
\sum_{k=1}(\dim A_k^+(\Gamma_0(2))+
\dim A_{k,2}^+(\Gamma_0(2)) 
\\ = & 
\frac{t^2+2t^4+t^6}{(1-t^2)(1-t^4)^2(1-t^6)}.
\end{align*}
This dimension formula means that $\psi_{2}$, $\psi_{4,1}$, $\psi_{4,2}$, $\psi_6$ are
free over $A^+(\Gamma_0(2))$. 

Next we consider $J_{k,1}^{-}(\Gamma_0(2))$. 
We first show that there exists 
$\Phi\in J_{k,1}^-(\Gamma_0(2))$ such that 
$\xi_0(\Phi_0)=\chi_{19}$. This is not obvious as before since  
$W\left(\frac{\p (\chi_{19}|[g])}{\p \tau_{12}})\right)\neq 0$ for any $g \in Sp(2,\Z)$.
The point is to find a vector valued modular form $h\in A_{k,2}^{-}(\Gamma_0(2))$ such that 
\[
W\left(\frac{2\pi i}{19}\frac{\p (\chi_{19}|[g])}{\p \tau_{12}}+h_{11}^g\right)=0
 \]
 for any $g\in Sp(2,\Z)$. 
We first see the case when $g=1_4$. 
By $W(\theta_{1111})=0$, we have $W(K_6)=0$. 
Since $K_6$ is an even function with respect to $\tau_{12}$, we also have 
$W(\p_{ij}K_6)=0$ for any $i$, $j$. 
We also have $W(\p_{12}X_2)=W(\p_{12}Y_4)=W(\p_{12}Z_4)=0$. The 
derivative $\p_{12}\chi_{19}$ is the summation of the 
determinants replacing the each row of the 
defining determinant of $\chi_{19}$ 
by derivatives by $\p_{12}$.
Under $W$, the derivatives of each row except for the third row vanishes. The fourth column vanishes under $W$, so expanding the third row after making $\p_{12}$
operate, we have   
\[
512W(\p_{12}\chi_{19})=-W(\p_{12}^2K_6)W(\{X_2,Y_4,Z_4\}_{11})/2.
\]
For the sake of simplicity, we write 
\begin{align*}
& A=\theta_{00}(\tau_{11}), \quad 
B=\theta_{01}(\tau_{11}),  \quad 
C=\theta_{10}(\tau_{11}), \quad 
\\ & a=\theta_{00}(\tau_{22}), \quad  
b=\theta_{01}(\tau_{22}),  \quad 
c=\theta_{10}(\tau_{22}),
\end{align*}
throughout the paper. It is well known that 
$A^4=B^4+C^4$ and $a^4=b^4+c^4$ (\cite{igusagraded1}).
Using  $W(\theta_{1111})=W(\p_{12}^2\theta_{1111})=0$ and  
\eqref{1111derivative}, we have 
\[
W(\p_{12}^2K_6)=2A^2B^2C^6a^2b^2c^6\times W((\p_{12}\theta_{1111})^2)/4096
=A^4B^4C^8a^4b^4c^8/32768.
\]
On the other hand, we have 
\[
W(Y_4Z_4)=A^4B^4C^8a^4b^4c^8/16384.
\]
So we have 
\[
W(\p_{12}^2K_6)=W(Y_4Z_4)/2.
\]
This means that 
\[
W(512\p_{12}\chi_{19}+Y_4Z_4\{X_2,Y_4,Z_4\}_{11}/4)=0.
\]
 To see the image of $W$ in the condition \eqref{wittcondition}
for $g\neq 1_4$, we must consider the behaviour on 
 the orbits $g\begin{pmatrix} \tau_{11} & 0 \\ 0 & \tau_{22} \end{pmatrix}$.
 By Proposition \ref{wittcondition} (2), we need 
 representatives of $\Gamma_0(2)\backslash Sp(2,\Z)/\Gamma_0$
 for $\Gamma_0=\iota(SL_2(\Z)\times SL_2(\Z))$. 

\begin{lemma}\label{rightrep}
For any prime $p$, a complete set of representatives of 
$\Gamma_0(p)\backslash Sp(2,\Z)$ is given by 
\[
\begin{pmatrix} 0 & 0 & 1 & 0 \\ 0 & 0 & 0 & 1 \\
-1 & 0 & a & b \\ 0 & -1 & b & c \end{pmatrix},
\begin{pmatrix}
0  & 0 & 1 & 0 \\ 0 & 1 & 0 & 0 \\
-1 & b & a & 0 \\ 0 & 0 & b & 1 \end{pmatrix},
\begin{pmatrix} 1 & 0 & 0 & 0 \\
0 & 0 & 0 & 1 \\ 0 & 0 & 1 & 0 \\
0 & -1 & 0 & a \end{pmatrix}, 
\begin{pmatrix} 1 & 0 & 0 & 0 \\
0 & 1 & 0 & 0 \\ 0 & 0 & 1 & 0 \\ 0 & 0 & 0 & 1 \end{pmatrix},
\]
where $a$, $b$, $c$ runs over representatives of $p\Z$.
\end{lemma}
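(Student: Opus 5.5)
The statement is presumably meant with $a$, $b$, $c$ running over representatives of $\Z/p\Z$, so that there are $p^3+p^2+p+1$ matrices in the list; I will prove it in that form by reducing the question to one about Lagrangian subspaces of $\F_p^4$. For $M=\begin{pmatrix} A & B \\ C & D\end{pmatrix}\in Sp(2,\Z)$, let $L(M)\subset \F_p^4$ be the row space of the $2\times 4$ matrix $(C\ D)\bmod p$. The symplectic relations $C\,^{t}D=D\,^{t}C$ and $A\,^{t}D-B\,^{t}C=1_2$ show that $(C\ D)$ has rank $2$ mod $p$ and that $L(M)$ is isotropic, hence Lagrangian, for the standard symplectic form on $\F_p^4$.

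\textbf{Step 1: $L$ classifies the cosets.} First, left multiplication by $\gamma=\begin{pmatrix} A' & B' \\ C' & D'\end{pmatrix}\in\Gamma_0(p)$ replaces $(C\ D)$ by $C'(A\ B)+D'(C\ D)\equiv D'(C\ D)$ mod $p$. Since $D'$ is invertible mod $p$, $L$ is constant on the cosets $\Gamma_0(p)M$. Conversely, suppose $L(M_1)=L(M_2)$. Then the bottom block row of $M_1M_2^{-1}$ is $(C_1\ D_1)$ times the first block column of the matrix $\begin{pmatrix} {}^{t}D_2 & -{}^{t}B_2 \\ -{}^{t}C_2 & {}^{t}A_2\end{pmatrix}=M_2^{-1}$. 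This product is $C_1\,^{t}D_2-D_1\,^{t}C_2$, and it vanishes mod $p$ exactly when the row spaces of $(C_1\ D_1)$ and $(C_2\ D_2)$ agree. Here one uses that a Lagrangian $L_2$ equals its own symplectic orthogonal, and that the form pairing $(C_1\ D_1)$ with $(C_2\ D_2)$ is exactly $C_1\,^{t}D_2-D_1\,^{t}C_2$. Hence $M_1M_2^{-1}\in\Gamma_0(p)$, so $L$ is injective on $\Gamma_0(p)\backslash Sp(2,\Z)$.

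\textbf{Step 2: counting.} The number of Lagrangian planes in $\F_p^4$ is $(p^4-1)(p^3-p)/\bigl((p^2-1)(p^2-p)\bigr)=(p+1)(p^2+1)=p^3+p^2+p+1$. This matches the size of the list, so it suffices to show that the listed matrices lie in $Sp(2,\Z)$ and have pairwise distinct $L$. Surjectivity of $L$ is then automatic, though it also follows from the surjectivity of $Sp(2,\Z)\to Sp(2,\F_p)$.

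\textbf{Step 3: the four families.} Membership in $Sp(2,\Z)$ is a direct check of ${}^{t}M J M=J$. For the first family the blocks are $A=0$, $B=1_2$, $C=-1_2$, $D=S$ with $S$ symmetric, and the condition reduces to symmetry of $S$. The second and third families are embedded copies of $\begin{pmatrix} 0&1\\-1&a\end{pmatrix}\in SL_2(\Z)$ in one coordinate, twisted by $b$ in the second family. To separate them, I sort by the rank of $C$ mod $p$, which is an invariant of $L$: it equals $2-\dim(L\cap(0\oplus\F_p^2))$.
\begin{itemize}
\item Rank $2$ (first family): $L$ is the row space of $(1_2\ \ -S)$, and these are distinct for distinct symmetric $S$ mod $p$. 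This gives $p^3$ planes.
\item Rank $1$ (second and third families): the line $L\cap(0\oplus\F_p^2)$ is spanned by $e_4$ in the second family and by $e_3$ in the third. This already separates the two families. Within the second family, $L$ has rows $(-1,b,a,0)$ and $(0,0,b,1)$; from these one reads off $a$ and $b$, giving $p^2$ planes. Within the third family the $p$ planes are distinguished by $a$.
\item Rank $0$: the identity, giving $L=0\oplus\F_p^2$.
\end{itemize}
The totals $p^3+p^2+p+1$ match Step 2, which completes the argument.

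I expect the main care to be needed in the rank-$1$ case. The point is to confirm that the prescribed shape (in the second family, $L\cap(0\oplus\F_p^2)=\langle e_4\rangle$ and the $(1,2)$ entry of $C$ equal to $b$) really produces $p^2$ distinct planes. One also has to make sure the orthogonality formula in Step 1 is used with the correct sign conventions.
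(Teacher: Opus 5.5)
Your proof is correct, and it takes a different route from the paper. The paper gives no argument of its own: it takes Hashimoto's list of left coset representatives of $Sp(2,\Z)/\Gamma_0(p)$ and inverts them.

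You instead prove the statement from scratch. You identify $\Gamma_0(p)\backslash Sp(2,\Z)$ with Lagrangian planes in $\mathbb{F}_p^4$ via the row space of $(C\ D)\bmod p$, count those planes as $(p+1)(p^2+1)$, and check that the list hits each plane exactly once. Because the count matches, you never need surjectivity of $Sp(2,\Z)\to Sp(2,\mathbb{F}_p)$. This is a nice economy.

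What each route buys:
\begin{itemize}
\item The paper's route is a one-line citation.
\item Yours is self-contained and explains the index. Your Step 1 classification works verbatim for $\Gamma_0^{(n)}(p)\backslash Sp(n,\Z)$.
\item Yours also makes the later double coset computations modulo $\Gamma_0=\iota(SL_2(\Z)\times SL_2(\Z))$ transparent. Right multiplication by $\Gamma_0$ acts on $L$ through $SL_2(\mathbb{F}_p)\times SL_2(\mathbb{F}_p)$, so those double cosets are orbits on Lagrangians.
\end{itemize}

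There is one small slip, and it sits exactly where you said care was needed. In the second family, $L\cap(0\oplus\mathbb{F}_p^2)$ is spanned by the row $(0,0,b,1)=be_3+e_4$, not by $e_4$, unless $b\equiv 0$. This does not damage the argument:
\begin{itemize}
\item $be_3+e_4$ is never proportional to $e_3$, so the second and third families are still separated.
\item This line determines $b$, after normalizing the last coordinate to $1$.
\item $a$ is then determined as the third coordinate of the unique vector of $L$ of the form $(-1,*,*,0)$.
\end{itemize}
Spelling out these invariants is what justifies ``one reads off $a$ and $b$'', since it shows the values depend only on $L$ and not on the chosen basis.
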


\begin{proof} This is obtained by taking the inverse of 
representatives of $Sp(2,\Z)/\Gamma_0(p)$ given in 
\cite{hashimoto} p.470 (7.4).
\end{proof} 

\begin{lemma} \label{level2rep}
A complete set of representatives of $\Gamma_0(2)\backslash Sp(2,\Z)/\Gamma_0$ 
is given by $\{1_4,M_1\}$ where 
\[
M_1=\begin{pmatrix} 1_2 & 0  \\
S_0 & 1_2 \end{pmatrix}, \qquad S_0=\begin{pmatrix} 0 & 1 \\ 1 & 0 \end{pmatrix}.
\]
\end{lemma}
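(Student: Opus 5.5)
The plan is to translate the double coset space into an orbit problem over $\F_2$ and count orbits. To a coset $\Gamma_0(2)g$ with $g=\begin{pmatrix} A & B \\ C & D\end{pmatrix}\in Sp(2,\Z)$, attach the row space $L(g)\subset \F_2^4$ of the $2\times 4$ matrix $(C\ D)\bmod 2$.

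First I check that this assignment is well defined and injective.
\begin{itemize}
\item For $\gamma=\begin{pmatrix} A' & B' \\ C' & D'\end{pmatrix}\in\Gamma_0(2)$, the bottom block row of $\gamma g$ is $C'(A\ B)+D'(C\ D)\equiv D'(C\ D) \bmod 2$. Since $D'$ is invertible mod $2$, the row space is unchanged.
\item Since $g$ is symplectic, $L(g)$ is a Lagrangian (maximal isotropic) subspace of $\F_2^4$ for the standard alternating form.
\item If $L(g)=L(g')$, then the bottom row of $g'g^{-1}$ is congruent mod $2$ to $(0\ *)$. Hence $g'g^{-1}\in\Gamma_0(2)$.
\end{itemize}
So $\Gamma_0(2)\backslash Sp(2,\Z)$ injects into the set of Lagrangians of $\F_2^4$. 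There are $(2^2+1)(2+1)=15$ of these, and this equals $[Sp(2,\Z):\Gamma_0(2)]$, which one reads off from Lemma \ref{rightrep} with $p=2$: $8+4+2+1=15$. Hence the map is a bijection. Right multiplication by $\gamma_0\in\Gamma_0$ corresponds to $L\mapsto L\gamma_0$, with $\gamma_0$ acting through $SL_2(\F_2)\times SL_2(\F_2)$.

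Next, order the coordinates as $(x_1,x_2,y_1,y_2)$ and write $\F_2^4=V_1\oplus V_2$ with $V_i=\langle e_i,f_i\rangle$. Then $\iota(SL_2\times SL_2)$ acts as $SL_2(\F_2)$ on each hyperbolic plane $V_i$ separately. A Lagrangian $L$ falls into one of two types.
\begin{itemize}
\item \textbf{Split type:} $L\cap V_1\neq 0$. Then $L=\ell_1\oplus\ell_2$ with $\ell_i$ a line in $V_i$. There are $3\cdot 3=9$ of these, and they form a single orbit because $SL_2(\F_2)$ is transitive on the three lines of $V_i$.
\item \textbf{Graph type:} $L\cap V_1=L\cap V_2=0$. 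Then $L$ is the graph of a linear isomorphism $\phi:V_1\to V_2$ with $\omega_2(\phi x,\phi y)=-\omega_1(x,y)=\omega_1(x,y)$. Such $\phi$ form a torsor under $SL_2(\F_2)$, so there are $6$ of them, and $SL_2\times SL_2$ acts transitively by $\phi\mapsto h_2^{-1}\phi h_1$.
\end{itemize}
Since $9+6=15$, there are exactly two orbits, and hence exactly two double cosets.

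Finally, identify the two orbits with the proposed representatives.
\begin{itemize}
\item For $1_4$, $(C\ D)=(0\ 1_2)$, so $L=\langle f_1,f_2\rangle$, which is split.
\item For $M_1$, $(C\ D)=(S_0\ 1_2)$ gives rows $(0,1,1,0)$ and $(1,0,0,1)$, i.e. $e_2+f_1$ and $e_1+f_2$. Their sum $e_1+e_2+f_1+f_2$ is also outside $V_1\cup V_2$, so $L$ meets neither $V_i$ and is of graph type.
\end{itemize}
Thus $\{1_4,M_1\}$ is a complete set of representatives.

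The only step needing care is the bookkeeping of conventions: that left $\Gamma_0(2)$-multiplication fixes the row space of $(C\ D)$ and right $\Gamma_0$-multiplication acts linearly and compatibly with the decomposition $V_1\oplus V_2$. I would verify this by writing $\iota(g_1,g_2)$ explicitly. If one prefers to avoid the Lagrangian language, the same argument can be run directly on the $15$ representatives of Lemma \ref{rightrep}, reducing each to $1_4$ or $M_1$ by right multiplication by explicit elements of $\Gamma_0$.
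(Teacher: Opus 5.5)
Your argument is correct, and it takes a genuinely different route from the paper.

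\textbf{The paper's route.} The paper takes the explicit list of $15$ right coset representatives from Lemma~\ref{rightrep} with $p=2$. Nine of them already lie in $\Gamma_0$. The six with $b=1$ are checked by hand to fall into the double coset of $\begin{pmatrix} 0 & 1_2 \\ -1_2 & S_0\end{pmatrix}$. Right multiplication by $\begin{pmatrix} 0 & -1_2 \\ 1_2 & 0\end{pmatrix}\in\Gamma_0$ then turns this into $M_1$.

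\textbf{Your route and what it buys.} You replace this bookkeeping by the invariant ``does the Lagrangian $L(g)\subset\mathbb{F}_2^4$ meet $V_1$ (equivalently $V_2$)''. This gains three things.
\begin{enumerate}
\item It proves cleanly that $1_4$ and $M_1$ are \emph{not} equivalent, a point the paper leaves to direct checking.
\item It makes Lemma~\ref{rightrep} unnecessary. Injectivity and $\Gamma_0$-equivariance of $g\mapsto L(g)$ already identify the double cosets with the orbits meeting the image. You exhibit a point in each of the two orbits, so the count $15=15$ is only a consistency check.
\item It works verbatim for $\Gamma_0(p)$ with any prime $p$, because $(p+1)^2+p(p^2-1)=(p+1)(p^2+1)$. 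So it also gives the level $3$ lemma without the reduction via $IMI$.
\end{enumerate}
The paper's version, in exchange, is fully explicit and relies only on a known coset list.

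\textbf{Two details to write out.}
\begin{itemize}
\item Justify the split case in one line. $L\cap V_1$ is a line $\ell_1$ because $V_1$ is not isotropic, and $L\subset\ell_1^{\perp}=\ell_1\oplus V_2$ forces $L=\ell_1\oplus\ell_2$. This also shows $L\cap V_1\neq 0$ iff $L\cap V_2\neq 0$, which your dichotomy needs.
\item Cite surjectivity of $SL_2(\Z)\to SL_2(\mathbb{F}_2)$ for the transitivity statements.
\end{itemize}
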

\begin{proof}
 Among representatives in Lemma \ref{rightrep}, 
 there are only $6$ elements that are not contained in $\Gamma_0$, 
 so we check those directly and we see that 
 $1_4$ and $\begin{pmatrix} 0 & 1_2 \\ -1_2 & S_0 \end{pmatrix}$ are representatives.
By multiplying $\begin{pmatrix} 0 & -1_2 \\ 1_2 & 0 \end{pmatrix} \in \Gamma_0$ 
to the latter from right, we have $M_1$. \end{proof}

Since the bracket 
preserves the action of $Sp(2,\R)$, we have 
\[
\chi_{19}|[g]=\{X_2|_2[g],Y_4|_4[g],Z_4|_4[g],K_6|_6[g]\}
\]
for any $g \in Sp(2,\R)$. When $g=M_0$, we can write down this action by the transformation formula of 
theta constants.
By Lemma \ref{upperkappa}, we have $\kappa(M)^2=1$. 
If $m=M\circ n$, then by Proposition \ref{transformation}, we have 
$\theta_{m}(M\tau)(C\tau+D)^{-1/2}=\kappa(M)e(\phi_n(M))\theta_{n}(\tau)$.
By this we have 
\begin{align*}
X_2|[M_1]= & (\theta_{0000}^4+\theta_{0110}^4+\theta_{1001}^4+\theta_{1111}^4)/4, \\
Y_4|[M_1]= & -(\theta_{0000}\theta_{0110}\theta_{1001}\theta_{1111})^2, \\
Z_4|[M_1]= & (\theta_{0110}^4-\theta_{0010}^4)^2/16384, \\
K_6|[M_1] = & -(\theta_{0100}\theta_{0010}\theta_{1000}\theta_{0001}
\theta_{1100}\theta_{0011})^2/4096.
\end{align*}
So we have $W(Y_4|[M_1])=0$ and since $Y_4|[M_1]$ 
is an even function with respect to $\tau_{12}$. we have $W(\p_{ij}Y_4|_4[M_1])=0$ 
for any $i$, $j$. 
So we have 
\[
512W(\p_{12}(\chi_{19}|_{19}[M_1])
=
-W(\p_{12}^2(Y_4|_4[M_1]))W(\{X_2|[M_1],Z_4|[M_1],K_6|[M_1]\}_{11})/2.
\]
Then by \eqref{1111derivative}, we have 
\begin{align*}
W(\p_{12}^2(Y_4|[M_1]))& =-2A^2B^2C^2a^2b^2c^2W((\p_{12}\theta_{1111})^2)
\\ & = -(1/8)A^4B^4c^4a^4b^4c^4,
\\
W(K_6|[M_1])& = -A^4B^4C^4a^4b^4c^4/4096.
\end{align*}
So we have 
\[
W(\p_{12}^2(Y_4|[M_1])-512W(K_6|[M_1])=0.
\]
If we put 
$h=c_1h^{(1)}+c_2h^{(2)}$
for $h^{(1)}=Y_4Z_4\{X_2,Y_4,Z_4\}$ and $h^{(2)}=K_6\{X_2,Z_4,K_6\}$ for some constants $c_1$, $c_2$, 
then $h\in A_{19,2}(\Gamma_0(2))$ and 
$W(h_{11})=c_1W((h^{(1)})_{11})$, $W(h^{M_0}_{11})=c_2
W((h^{(2)})^{M_0}_{11})$. 
If we put 
$c_1=\frac{(2\pi i)^2}{19}\frac{1}{256}$ and $c_2=\frac{(2\pi i)^2}{19}$,
then we have 
\[
W\biggl(\frac{2\pi i}{19}\frac{\p(\chi_{19}|[M_1])}{\p \tau_{12}}+h^{M_1}_{11}\biggr)=0.
\]
This means that there exists $\Phi_{19}\in J_{k,1}(\Gamma_0(2))$ 
such that 
\[
\xi(\Phi_{19})=(\chi_{19},h).
\]
Now if we assume that $\Phi\in J_{k,1}^-(\Gamma_0(2))$, 
then since $A^-(\Gamma)=\chi_{19}A^+(\Gamma_0(2))$, there exists 
$f\in A^+(\Gamma_0(2))$ such that $\xi_0(\Phi-f\phi_{19})=0$.  
So it is enough to consider $\Phi \in J_{k,1}(\Gamma_0(2))$ such that 
$\xi_0(\Phi)=0$. Then the condition for existence of 
$\Phi$ for $h\in A_{k,2}(\Gamma_0(2))$ such that $\xi_2(\Phi)=(0,h)$ is 
$W(h_{11})=0$ and $W(h_{11}^{[M_1]})=0$. 
It is clear that $W(h_{11})=W((h^{[M_1]})_{11})=0$ for 
$h=\{X_2,Y_4,K_6\}$, $\{Y_4,Z_4,K_6\}$ since $W(Y_4|[M_1])=W(K_6)=0$. 
We consider $h=f_1\{X_2,Z_4,K_6\}+f_2\{X_2,Y_4,Z_4\}$ with $f_1\in A_{k-13}^+(\Gamma_0(2))$, 
$f_{2}\in A_{k-11}^{+}(\Gamma_0(2))$ and see the condition that 
$W(h_{11})=W((h^{M_1})_{11})=0$. Since we see $W(\{X_2,Y_4,Z_4\}_{11})\neq 0$
(e.g. by calculating the Fourier coefficients), 
we have $W(f_2)=0$, which means $f_2\in K A^+(\Gamma_0(2))$. 
Taking $h|[M_1]$, we also see that $W(f_1|[M_1])=0$.
This means that $f_1\in Y A^+(\Gamma_0(2))$. 
Since we have
\[
k_1f_1\{f_2,f_3,f_4\}+k_2f_2\{f_3,f_4,f_1\}+k_3f_3\{f_4,f_1,f_2\}+k_4f_4\{f_1,f_2,f_3\}=0
\]
for any $f_i\in A_{k_i}(\Gamma)$, we have 
\[
Y_4\{X_2,Z_4,K_6\}\in A^+(\Gamma)K_6\{X_2,Y_4,Z_4\}+A^+(\Gamma)\{X_2,Y_4,K_6\}+
A^+(\Gamma)\{Y_4,Z_4,K_6\}.
\]
So the image of $\Phi\in J_{k,1}(\Gamma_0(2))$ such that $\xi_0(\Phi)=0$ is 
\[
 A^+(\Gamma_0(2))K\{X_2,Y_4,Z_4\}+A^+(\Gamma_0(2))\{X_2,Y_4,K_6\}+
A^+(\Gamma_0(2))\{Y_4,Z_4,K_6\}.
\]
So $J_{k,1}^-(\Gamma_0(2))$ is spanned by 
the inverse images by $\xi$ of $(\chi_{19},h)$, $(0,\{X_2,Y_4,K_6\})$, $(0,\{Y_4,Z_4,K_6\})$, 
$(0,K_6\{X_2,Y_4,Z_4\})$. 
This is free over $A^+(\Gamma_0(2))$ by \cite{aokivector}, so the dimensions are 
given by 
\[
\sum_{k=1}^{\infty}J_{k,1}^-(\Gamma_0(2))=
\frac{t^{19}}{(1-t^2)(1-t^4)^2(1-t^6)}
+\frac{t^{13}+t^{15}+t^{17}}{(1-t^2)(1-t^4)^2(1-t^6)}.
\]
So we proved the case when $\Gamma=\Gamma_0(2)$. 

We sketch the case $N=1$ shortly since the proof is 
simpler than the case $N=2$. 
We have 
\begin{align*}
A^+(\Gamma_2)= & \C[\varphi_4,\varphi_6,\chi_{10},\chi_{12}],\\
A^+(\Gamma_2)=& \chi_{35}A^+(\Gamma_2),
\end{align*}
where $\varphi_k$ are the Eisenstein series of weight $k$ such that 
the constant terms are $1$, and
$\chi_{10}$ and $\chi_{12}$ are unique cusp forms of weight $10$ and $12$, 
such that  the Fourier coeffieients $a(1,1,1;\chi_{10})=
a(1,1,1;\chi_{12})=1$, where 
$a(t_1,t_2,t_{12};f)$ means the Fourier coefficient 
of $f$ at $e(t_1\tau_{11}+t_2\tau_{22}+t_{12}\tau_{12})$. 
In particular, we have 
\[
\chi_{10}=(\theta_{0000}\theta_{0001}\theta_{0010}\theta_{0011}
\theta_{0100}\theta_{0110}\theta_{1000}\theta_{1001}\theta_{1100}\theta_{1111})^2
/4096.
\]
We also define
\[
\chi_{35}=\{\varphi_4,\varphi_6,\chi_{10},\chi_{12}\}/2^9\cdot 3^4.
\]
By the same argument as in the case $N=2$, we have 
\[
\xi(J_{k,1}^+(\Gamma_2))=A_k^+(\Gamma_2)\times A_{k,2}^+(\Gamma_2).
\]
The images $\xi(\Phi)$ of generators are given by
\[
(\varphi_4,0), (\varphi_6,0),(\chi_{10},0),(\chi_{12},0).
\]
 
For $J_{k,1}^-(\Gamma_2)$, the only new thing is to construct 
$\Phi_{35}$ such that $\xi_0(\Phi_{35})=\chi_{35}$, where 
\[
\chi_{35}=\frac{1}{2^9\cdot 3^4}\{\varphi_4,\varphi_6,\chi_{10},\chi_{12}\}. 
\]
Since 
\begin{align*}
W(\p_{12}^2\chi_{10})=& 2\prod_{i=1}^{2}A^6B^6C^6a^6b^6c^6
W((\p_{12}\theta_{1111})^2)/4096,
\\ A^8B^8C^8a^8b^8c^8/32768=&
2\Delta(\tau_{11})\Delta(\tau_{22}),
\end{align*}
where $\Delta(\tau)=e(\tau)\prod_{n=1}^{\infty}(1-e(n\tau))^{24}$ is the 
Ramanujan's $\Delta$ function.
Since we have 
$a(1,1,0;\chi_{12})=10$, $a(1,1,\pm 1;\chi_{12})=1$, we have 
\[
W(\chi_{12})= 
12e(\tau_{11})e(\tau_{22})+\cdots=12\Delta(\tau_{11})\Delta(\tau_{22}).
\]
So we have 
\[
W(\p_{12}^2\chi_{10})=\frac{1}{6}W(\chi_{12}).
\]
So there exists $\Phi_{35}\in J_{35,1}(\Gamma_2)$ 
such that 
\[
\xi(\Phi_{35})=
\left(\chi_{35},
-\frac{(2\pi i)^2}{35\cdot 2^{11}\cdot 3^5} 
\chi_{12}\{\varphi_4,\varphi_6,\chi_{12}\}\right).
\]
Now since $A^-(\Gamma_2)=\chi_{35}A(\Gamma_2)$, 
for any $\Phi\in J_{k,1}(\Gamma_2)$, there exists 
$\Phi_0\in J_{k,1}(\Gamma_2)$ such that 
$\xi(\Phi-\Phi_0)=0$.
So we will determine the images of the shape $(0,h)$.
The forms $h$ is in the space spanned by 
$\{\varphi_4,\varphi_6,\chi_{10}\}$, 
$\{\varphi_4,\chi_{10},\chi_{12}\}$,
$\{\varphi_6,\chi_{10},\chi_{12}\}$ and 
$\{\varphi_4,\varphi_6,\chi_{12}\}$
over $A^+(\Gamma_2)$.  
We have
\[
W(\{\varphi_4,\varphi_6,\chi_{10}\}_{11})=
W(\{\varphi_4,\chi_{10},\chi_{12}\}_{11})=
W(\{\varphi_6,\chi_{10},\chi_{12}\}_{11})=0.
\]
So we have Jacobi forms 
$\Phi_{21}$, $\Phi_{27}$, $\Phi_{29}$ such that 
$\xi(\Phi_{21}) =(0,\{\varphi_4,\phi_6,\chi_{10}\})$, 
$\xi(\Phi_{27})  = (0,\{\varphi_4,\chi_{10},\chi_{12}\})$, 
$\xi(\Phi_{29})  = (0,\{\varphi_6,\chi_{10},\chi_{12}\})$.
So we must ask if we have $W(h_{11})=0$ for
some $h=f\{\varphi_4,\varphi_6,\chi_{12}\}$.
If $(0,h)$ is in the image of $\xi$, then since 
we can check $W(\{\varphi_4,\varphi_6,\chi_{12}\}_{11})
\neq 0$, we should have $W(f)=0$. This is equivalent to say that $f\in \chi_{10}A^+(\Gamma_2)$. 
But we have 
\begin{multline*}
10\chi_{10}\{\varphi_4,\varphi_6,\chi_{12}\}
+4\varphi_4\{\varphi_6,\chi_{12},\chi_{10}\}
\\
+6\varphi_6\{\chi_{12},\chi_{10},\varphi_4\}
+12\chi_{12}\{\chi_{10},\varphi_4,\varphi_6\}
=0.
\end{multline*}
So $J_{*,1}^-(\Gamma_2)$ is 
spanned by $\Phi_{35}$, $\Phi_{21}$, $\Phi_{27}$, $\Phi_{29}$ over $A^+(\Gamma_2)$.
So we prove Theorem 1.1 and Corollary 1.2 for $\Gamma=\Gamma_2$.

(In \cite{ibujacobi}, we claimed there exists a Jacobi form with 
$\xi(\Phi)=(\chi_{35},0)$. But the second component should not be $0$ as we explained above.)

\section{Proof for level $3$}
In cases $N\geq 3$, there are several differences 
from the case $N=1$, $2$. For example, we have 
$I\not\in \Gamma_0(N)^{\psi}$, so for 
$f\in A_k^+(\Gamma_0(N)^\psi)$ and $g\in \Gamma_2$,
the form $f|_k[g]$ is not always an even function of 
$\tau_{12}$ and in general we have 
$W(\p_{12}f^g)\neq 0$, so we need more careful 
consideration.

Now we consider the case $N=3$ in detail.
As in \cite{ibuaoki}, we put
\[
A_2=\begin{pmatrix} 2 & 1 \\ 1 & 2 \end{pmatrix},
\]
\[
E_6=\begin{pmatrix} 2 & -1 & 0 & 0 & 0 & 0 \\
-1 & 2 & -1 & 0 & 0 & 0 \\
0 & -1 & 2 & -1 & 0 & -1 \\
0 & 0 & -1 & 2 & -1 & 0 \\
0 & 0 & 0 & -1 & 2 & 0 \\
0 & 0 & -1 & 0 & 0 & 2 
\end{pmatrix},
\]
\[
E_{6s}=3E_6^{-1}=\begin{pmatrix} 4 & 5 & 6 & 4 & 2 & 3 
\\ 5 & 10 & 12 & 8 & 4 & 6 \\
6 & 12 & 18 & 12 & 6 & 9 \\
4 & 8 & 12 & 10 & 5 & 6 \\
2 & 4 & 6 & 5 & 4 & 3 \\
3 & 6 & 9 & 6 & 3 & 6 \end{pmatrix},
\]
\[
S_4=\begin{pmatrix} 1 & 0 & 3/2 & 0 \\ 
0 & 1 & 0 & 3/2 \\
3/2 & 0 & 3 & 0 \\ 0 & 3/2 & 0 & 3 \end{pmatrix}.
\]
(In \cite{ibuaoki} we write $E_{6s}=E_6^*$ but 
we change the notation since we need superscript for 
other notation.)

For any $\tau\in H_n$ and any $m\times m$ even integral matrix 
$S$, we put 
\[
\theta_{S}(\tau)=\sum_{p\in M_{n,m}(\Z)}e(Tr(pS\,^{t}p \tau)/2).
\]
Then for $\tau\in H_2$, we have 
$\theta_{A_2}(\tau)\in A_2^+(\Gamma_0(3)^{\psi})$
and $\theta_{E_6}(\tau)$, 
$\theta_{E_{6s}}(\tau)\in A_3^{+}(\Gamma_0(3)^{\psi})$. 
We put 
\[
c_4=\sum_{x,y\in \Z^4}(c^2-d^2)e(xS_4\,^{t}x\tau_{11}
+2xS_4\,^{t}y\tau_{12}+yS_4\,^{t}y\tau_{22}),
\]
where $c=(x_1y_3-x_3y_1)+(x_2y_4-y_2x_4)$ and 
$d=(x_1y_4-x_4y_1)+(x_3y_2-x_2y_3)+(x_1y_2-y_1x_2)$.
(This was denoted $\gamma_4$ in \cite{ibuaoki}.)
We have $c_4\in A_4^+(\Gamma_0(3)^{\psi})$.
To make notation consistent with those in the former 
papers \cite{ibulevel3,ibuaoki}, we put
\begin{align*}
\a_1=\alpha_1=\theta_{A_2}(\tau),
\quad \b_3=\theta_{E_6}-10\theta_{A_2}^3+9\theta_{E_{6s}},
\quad \d_3=\theta_{E_6}-9\theta_{E_{6s}}.
\end{align*}
We also put 
\[
\chi_{14}=\frac{1}{2^9\cdot 3^{10}}\{\a_1,\b_3,c_4,\d_3\}
\]
(In \cite{ibuaoki} p. 257 line 2, $\delta_4$ should read 
$\delta_3$).
Then we have 
\begin{align*}
A^+(\Gamma_0(3)^{\psi})=&\C[\a_1,\b_3,c_4,\d_3], \\
A^-(\Gamma_0(3)^{\psi})=& \chi_{14}A^{+}(\Gamma_0(3)^{\psi}).
\end{align*}
For later use, we introduce simpler notation.
We put 
\begin{align*}
a_1 & =\alpha_1, \\
b_3 & = (\beta_3+\delta_3)/2+5\alpha_1^3 =\theta_{E_6},
\\
e_3 & =8\a_1^3+2\b_3-\d_3=\theta_{E_6}
-12\theta_{A_2}^3+27\theta_{E_{6s}},\\
c_4 & =(12\theta_{A_2}\theta_{E_6}-27\theta_{A_2}^4-\theta_{E_8})/162, 
\end{align*}
where $E_8$ is the even unimodular matrix of rank $8$,  
and $\theta_{E_8}=\varphi_4$, where $\varphi_4$ is 
the Siegel Eisenstein series of weight $4$ of level $1$ of degree 2  
with constant term one. Then obviously we have 
$A^+(\Gamma_0(3)^{\psi})=\C[a_1,b_3,e_3,c_4]$. 
We first consider $J_{k,1}^+(\Gamma_0(3)^{\psi})$.
We consider 
\[
Z=\begin{pmatrix} \tau & ^{t}z \\
z & \omega \end{pmatrix} \in H_3
\]
for $\tau\in H_2$, $\omega\in H_1$, 
and $\theta_{A_2}(Z)$ and $\theta_{E_6}(Z)$.
Taking the non-zero Fourier Jacobi coefficients
at $e(\omega)$ of these theta series, we can define 
Jacobi forms $\Phi_1$ of weight $1$ and $\Phi_3$ of 
weight $3$ such that 
$\xi_0(\Phi_1)=a_1$, $\xi_0(\Phi_3)=\theta_{E_6}$.
By \cite{aokivector}, we know that  
$A_{*,2}^+(\Gamma_0(3)^{\psi})$ is generated 
over $A^{+}(\Gamma_0(3)^{\psi})$ by 
$\{\a_1,\b_3\}$, $\{\a_1,\d_3\}$, $\{\a_1,c_4\}$,
$\{\b_3,\d_3\}$, $\{\b_3,c_4\}$, $\{\d_3,c_4\}$.
Since $A_{k,2}(\Gamma_0(3)^{\psi})=0$ for $k\leq 3$,
we have $\xi(\Phi_1)=(a_1,0)$, $\xi(\Phi_3)=(\vartheta_{E_6},0)$.
In the same way, since $A_{4,2}(\Gamma_2)=0$, 
we have $\Phi_4\in J_{4,1}(\Gamma_2)$ such that 
$\xi(\Phi_4)=(\varphi_4,0)$. 
Unfortunately $E_{6s}$ has no vector of length $2$,
so we cannot construct another Jacobi form of 
weight $3$ from a Siegel modular form of degree $3$.
 
We must calculate several images of the Witt operator 
on the divisors of $\chi_{10}=0$.
As written in \cite{ibuaoki}, we have 
\begin{equation}\label{level3ten}
\chi_{10}=c_4e_3^2/6144.
\end{equation}
The $Sp(2,\Z)$ orbit of diagonals is divided into 
two divisors $\c_4=0$ and $e_3=0$.  
We write $M_1$ as in Lemma \ref{level2rep}
and $J=\begin{pmatrix} 0 & -1_2 \\ 1_2 & 0 \end{pmatrix}$.
We put  
\[
K=-I(M_1J)I=\begin{pmatrix} 0 & -1_2 \\ 1_2 & S_0 \end{pmatrix}, \quad S_0=\begin{pmatrix} 0 & 1 \\ 1 & 0 \end{pmatrix}.
\]
\begin{lemma}
We have 
\[
\Gamma_0(3)\backslash Sp(2,\Z)/\Gamma_0=
\{1_4,M_1\}=\{1_4,K\}.
\]
\end{lemma}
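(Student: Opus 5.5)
We need the double cosets $\Gamma_0(3)\backslash Sp(2,\Z)/\Gamma_0$ with $\Gamma_0=\iota(SL_2(\Z)\times SL_2(\Z))$. We will also show that $M_1$ and $K$ lie in the same double coset and that this coset differs from $\Gamma_0(3)\Gamma_0$.

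\emph{Step 1: reduce to an action on Lagrangian lines.} A right coset $\Gamma_0(3)g$ is determined by the bottom block row $(C\ D)$ of $g$ reduced mod $3$, up to left multiplication by $GL_2(\F_3)$. Equivalently, it is determined by the Lagrangian subspace of $\F_3^4$ spanned by the rows of $(C\ D)\bmod 3$. Since $Sp(2,\Z)\to Sp(2,\F_3)$ is surjective, $\Gamma_0(3)\backslash Sp(2,\Z)$ is identified with the set $\mathcal L$ of Lagrangian planes in $\F_3^4$. Lemma~\ref{rightrep} confirms the count $27+9+3+1=40=|\mathcal L|$. The double cosets are then the orbits of $SL_2(\F_3)\times SL_2(\F_3)$ acting on $\mathcal L$ from the right.

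\emph{Step 2: classify the orbits.} Write $\F_3^4=V_1\oplus V_2$ with $V_1=\langle e_1,f_1\rangle$ and $V_2=\langle e_2,f_2\rangle$ as symplectic planes; the factor $\Gamma_0$ acts by $SL(V_1)\times SL(V_2)$. For a Lagrangian $L$, consider $L\cap V_1$, which has dimension $0$ or $1$.
\begin{itemize}
\item If $\dim(L\cap V_1)=1$, then also $\dim(L\cap V_2)=1$, since $(L\cap V_1)^\perp\cap L$ forces the rest into $V_2$. Hence $L=\ell_1\oplus\ell_2$. Because $SL_2(\F_3)$ is transitive on lines in $\F_3^2$, all such $L$ form one orbit, which is that of $1_4$ (the plane $\langle f_1,f_2\rangle$).
\item If $L\cap V_1=0$, then $L$ is the graph of a linear isomorphism $\phi:V_1\to V_2$ with $\omega_2(\phi x,\phi y)=-\omega_1(x,y)$. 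Thus $\phi$ is an anti-symplectic isomorphism. The group $SL(V_1)\times SL(V_2)$ acts by $\phi\mapsto h_2\phi h_1^{-1}$, and this action is simply transitive on such maps. So all these $L$ form one orbit.
\end{itemize}
A count checks this: there are $|SL_2(\F_3)|=24$ graphs and $4\cdot4=16$ split planes, and $24+16=40$.

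\emph{Step 3: identify representatives.} For $M_1$ we have $(C\ D)=(S_0\ 1_2)$, whose row span $\langle (0,1,1,0),(1,0,0,1)\rangle$ meets neither $V_1$ nor $V_2$; so $M_1$ lies in the graph orbit. For $K=-I(M_1J)I$, note that $J\in\Gamma_0$. Conjugation by $I$ normalizes both $\Gamma_0(3)$ and $\Gamma_0$, and $-1_4\in\Gamma_0$. Hence $K$ lies in the double coset of $IM_1I$. Since $I=\iota(1,\mathrm{diag}(-1,-1))$ up to the sign structure, the map $L\mapsto LI$ preserves both orbit types, so $IM_1I$ is again in the graph orbit. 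Alternatively, one checks directly that the bottom row $(1_2\ S_0)$ of $K$ spans a plane meeting $V_1$ trivially. Therefore $\{1_4,M_1\}=\{1_4,K\}$.

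The only delicate point is Step 1: checking that the coordinate conventions in $Sp(2,\Z)$ match the splitting $V_1\oplus V_2$ under $\iota$, and that the right action of $\Gamma_0$ on bottom rows is exactly the action of $SL(V_1)\times SL(V_2)$. Step 2 is the standard Witt-type argument. The same proof works for any prime $p$, which is consistent with Lemma~\ref{level2rep}.
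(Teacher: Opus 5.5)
Your proof is correct, but it takes a different route from the paper's.

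\textbf{The paper's route.} It starts from the explicit list of right coset representatives of $\Gamma_0(p)\backslash Sp(2,\Z)$ in Lemma~\ref{rightrep}, taken from \cite{hashimoto}. It observes that all of them lie in $\Gamma_0$ except those with $b=\pm1$, and reduces $b=-1$ to $b=1$ by conjugating with $I$. It then checks by matrix multiplication that $M_0^{-1}$ times each remaining representative lies in $\Gamma_0$, where $M_0=\begin{pmatrix}0&1_2\\-1_2&S_0\end{pmatrix}$. Finally it uses $M_1=M_0J$ with $J\in\Gamma_0$.

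\textbf{Your route.} You identify $\Gamma_0(3)\backslash Sp(2,\Z)$ with the $40$ Lagrangian planes in $\mathbb{F}_3^4$. You then classify the orbits of $SL(V_1)\times SL(V_2)$ on these planes by the invariant $\dim(L\cap V_1)$.

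\textbf{What each buys.} The paper's argument is a short finite verification once the list is available. Yours is intrinsic and works verbatim for every prime $p$, recovering Lemma~\ref{level2rep}. It also gives the sizes $16$ and $24$ of the two double cosets. Most usefully, the invariant shows the two double cosets are genuinely distinct, a point the paper's proof leaves implicit.

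\textbf{Small corrections.}
\begin{itemize}
\item In the first bullet of Step 2, the clean reason is $L\subset (L\cap V_1)^{\perp}=(L\cap V_1)\oplus V_2$.
\item The product $SL(V_1)\times SL(V_2)$, of order $576$, acts transitively but not simply transitively on the $24$ anti-symplectic isomorphisms. Only $SL(V_2)$ by itself acts simply transitively, and transitivity is all you need.
\item One has $I=\iota(1_2,-1_2)$ exactly, so $I\in\Gamma_0\cap\Gamma_0(3)$ and no ``up to the sign structure'' hedge is needed.
\end{itemize}

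Your direct check on the bottom block row of $K$ is the cleanest way to finish. In fact $KM_1^{-1}=\begin{pmatrix}S_0&-1_2\\0&S_0\end{pmatrix}$, so $K$ and $M_1$ lie in the same right coset of $\Gamma_0(N)$ for every $N$.
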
 

\begin{proof}
In the representatives $M$ in Lemma \ref{rightrep},
those which are not in $\Gamma_0$ are given by 
$b=\pm 1$. By seeing $IMI$ where $I={\rm diag}(1,-1,1,-1)$, 
we may assume that $b=1$. If we put 
$M_0=\begin{pmatrix} 0 & 1_2 \\ -1_2 & S \end{pmatrix}$
for $S=\begin{pmatrix} 0 & 1 \\ 1 & 0 \end{pmatrix}$,
then we have 
\begin{align*}
M_0^{-1}\begin{pmatrix} 0 & 0 & 1 & 0 \\
0 & 0 & 0 & 1 \\
-1 & 0 & a & 1 \\
0 & -1 & 1 & c \end{pmatrix}
& =
\begin{pmatrix}1 & 0 & -a & 0 \\ 0 & 1 & 0 & c \\
0 & 0 & 1 & 0 \\ 0 & 0 & 0 & 1 \end{pmatrix}\in \Gamma_0,
\\
M_0^{-1}\begin{pmatrix} 0 & 0 & 1 & 0 \\
0 & 1 & 0 & 0 \\ -1 & 1 & a & 0 \\
0 & 0 & 1 & 1 \end{pmatrix}
& =\begin{pmatrix} 1 & 0 & -a & 0 \\
0 & 0 & 0 & -1 \\ 0 & 0 & 1 & 0 \\ 0 & 1 & 0 & 0 \end{pmatrix}
\in \Gamma_0.
\end{align*}
So the representatives are $1_4$ and $M_0$, but 
$M_1=M_0J_2$ and $J_2\in \Gamma_0$. 
\end{proof}

For any $f \in A_k^+(\Gamma_0(3)^{\psi})$,  we write 
\[
f^*=f|_k[K].
\]
\subsection{Structure of $J_{*,1}^+(\Gamma_0(3)^{\psi})$}
Since $\xi(\Phi_1)=(a_1,0)$ and $\xi(\Phi_3)=(b_3,0)$ and $\xi_4(\Phi_4)=(\varphi_4,0)$, 
by \eqref{wittcondition}, 
we have $W(\p_{12}a_1)=W(\p_{12}a_1^*)=W(\p_{12}b_3)=W(\p_{12}b_3^*)=0$
$W(\p_{12}\varphi_4)=0$. ($\varphi_4^*=\varphi_4$. In this case, this is an 
even function of $\tau_{12}$ so the above fact is also clear from this.)
Since $\theta_{S}(\tau)$ is obviously an even function of $\tau_{12}$ for any 
even positive definite symmetric matrix $S$, we have
also $W(\p_{12}e_3)=0$.
By direct calculation, we can show that $W(c_4)=0$. 
By \eqref{level3ten}, since $\chi_{10}^*=\chi_{10}$ and $W(\chi_{10})=0$,
we have 
\[
W(c_4^*)W(e_3^*)^2=0.
\]
Since we can show directly that $W(e_3)\neq 0$, so we have $W(\p_{12}c_4)=0$.
But if $W(c_4^*)=0$, then $W(c_4|[M])=0$ for all $M\in Sp(2,\Z)$,
so $c_4$ is divisible by $\chi_5$, which is a contradiction since $c_4/\chi_5$ 
is of weight $-1$. So we have 
$W(c_4^*)\neq 0$ and $W(e_3^*)=0$. So we have $W((e_3^2)^*)=0$ and 
$W(\p_{12}((e_3^2)^*)=2W(\p_{12}e_3^*)W(e_3^*)=0$, so 
there exists $\Phi_6\in J_{6,1}^{I}(\Gamma_0(3)^{\psi})$ such that 
$\xi(\Phi_6)=(e_3^2,0)$. 
By the way,  we have 
\[
2\Delta(\tau_{11})\Delta(\tau_{22})=W(\p_{12}^2\chi_{10})=2W(c_4^*)W((\p_{12}e_3^*)^2),
\]
where $\Delta$ is the Ramanujan delta.
So we have $W(\p_{12}e_3^*)\neq 0$, which will be used later. 

Now for $\Gamma=\Gamma_0(3)^{\psi}$, we consider the module
\[
\fM=A^+(\Gamma)a_1+A^+(\Gamma)b_3+
A^+(\Gamma)e_3^2+A^+(\Gamma)\varphi_4.
\]
This expression is not free, but 
we will show that $\xi_0(J_{*,1}(\Gamma_0(3)^{\psi})
=\fM$. 
Since $W(\p_{12}e_3)\neq 0$ and $A_{3,2}(\Gamma_0(3)^{\psi})=0$, it is clear that there is no 
$\Phi\in J_{3,1}(\Gamma_0(3)^{\psi})$ such that 
$\xi(\Phi)=(e_3,*)$. It is clear that the submodule 
of $\C[a_1,b_3,e_3,\varphi_4]$ consisting of those polynomials with no
constant term and no term of the form $ce_3$ where $c$ is a constant
is given by $\fM$. 
By the relation \eqref{moduleaction}, 
we have $\xi(f\Phi_1)=(fa_1,*)$, $\xi(f\Phi_3)=(fb_3,*)$.
$\xi(f\Phi_4) =(f\varphi_4,*)$, $\xi(f\Phi_6)=(fe_3^2,*)$, 
so it is clear that 
\[
\xi_0(J_{*,1}^{+}(\Gamma_0(3)^{\psi}))=\xi(\fM).
\]
So for $\phi\in J_{*,1}^{+}(\Gamma_0(3)^{\psi})$, 
there exists 
\[
\Phi_0\in A^+(\Gamma_0(3)^{\psi})\Phi_1+
A^+(\Gamma_0(3)^{\psi})\Phi_3+
A^+(\Gamma_0(3)^{\psi})\Phi_4+
A^+(\Gamma_0(3)^{\psi})\Phi_6
\]
such that 
$\xi_0(\Phi-\Phi_0)=0$, so we may assume that
$\xi_0(\Phi)=0$. 
We can show that by \eqref{rel1}, 
there exists $\Phi\in J_{*,1}(\Gamma_0(3)^{\psi})$ such that 
$\xi(\Phi)=(0,h)$ for $h=\{a_1,b_3\}$, $\{a_1,\varphi_4\}$, 
$\{b_3,\varphi_4\}$, $\{a_1,e_3^2\}$, $\{b_3,e_3^2\}$,
$\{\varphi_4,e_3^2\}$. 
We must show that $(0,h)$ should be generated by 
these forms.
To see this we must determine $h\in A_{k,2}(\Gamma_0(3)^{\psi})$ such that $W(h_{11})=W(h^*_{11})=0$.
We have already explained that 
\begin{align}\label{wittzero}
& W(\p_{12}a_1)=W(\p_{12}b_3)=W(\p_{12}a_1^*)=
W(\p_{12}b_3^*)=
W(\p_{12}e_3)
\\ & =W(\p_{12}\varphi_4)=
W(\p_{12}\varphi_4^*)=0, \notag
\end{align}
but $W(\p_{12}e_3^*)\neq 0$. 
By \eqref{wittzero}, for $h=\{a_1,b_3\}$, $\{a_1,\varphi_4\}$ or $\{b_4,\varphi_4\}$,
we have $W(h_{11})=W(h_{11}^*)=0$.
Now put 
\begin{equation}\label{defh}
h=F_1\{a_1,e_3\}+F_2\{b_3,e_3\}+F_3\{\varphi_4,e_3\}.
\end{equation}
By \eqref{wittzero}, we have 
\[
W(\{a_1,e_3\}_{11})=W(\{b_3,e_3\}_{11})=
W(\{\varphi_4,e_3\}_{11})=0,
\]
so we have $W(h_{11})=0$. 
On the other hand, if we assume that $W(h_{11}^*)=0$, 
then we have
\[
W(F_1^*\{a_1^*,e_3^*\}_{11}
+F_2^*\{b_3^*,e_3^*\}_{11}
+F_3^*\{\varphi_4,e_3^*\}_{11})=0.
\]
By \eqref{wittzero}, this means 
\[
W(F_1^*a_1^*+3F_2^*b_3^*+4F_3^*\varphi_4)W(\p_{12}e_3^*)=0.
\]
Since $W(\p_{12}e_3^*)\neq 0$,  we have 
\begin{equation}\label{wittrelation}
W(F_1^*a_1^*+3F_2^*b_3^*+4F_3^*\varphi_4)=0.
\end{equation}
Since the kernel of $W$ on 
$\C[a_1^*,b_3^*,\varphi_4,e_3^*]$ is the ideal 
generated by $e_3^*$, the relation \eqref{wittrelation}
means that 
\[
F_1^*a_1^*+3F_2^*b_3^*+4F_3^*\varphi_4=G^*e_3^*
\]
for some $G\in A^+(\Gamma_0(3)^{\psi})$.
Since $f\rightarrow f^*$ is an isomorphism, 
the above relation is equivalent to the relation
\begin{equation}\label{level3relation}
F_1a_1+3F_2b_3+4F_3\varphi_4=Ge_3.
\end{equation}
Now we will see what are $F_i$.
Since $a_1$, $b_3$, $\varphi_4$, $e_3$ 
are algebraically independent, $A^+(\Gamma_0(3)^{\psi})
=\C[a_1,b_3,\varphi_4,e_3]$ is a weighted 
polynomial ring, for any $i=1$, $2$, $3$, we may uniquely write 
\begin{equation}
F_i=e_3G_i(a_1,b_3,e_3,\varphi_4)+H_i(a_1,b_3,\varphi_4).
\end{equation}
Then by \eqref{level3relation}, we have 
\begin{equation}\label{level3relH}
a_1H_1+3b_3H_2+4\varphi_4H_4=0.
\end{equation}
For $i=1$, $2$, we also write 
\[
H_i(a_1,b_3,\varphi_4)=P_i(a_1,b_3)+\varphi_4Q_i(a_1,b_3,\varphi_4).
\]
for some polynomials $P_i$, $Q_i$.
So \eqref{level3relation} means 
\begin{align*}
& a_1P_1(a_1,b_3)+3b_3P_2(a_1,b_3)  = 0, \\
& a_1Q_1(a_1,b_3,\varphi_4)+3b_3Q_2(a_1,b_3,\varphi_4)+4H_3(a_1,b_3,\varphi_4)=0.
\end{align*}
This means that 
$P_1(a_1,b_3)=-3b_3R(a_1,b_3)$ and $P_2(a_1,b_3)=a_1R(a_1,b_3)$ 
for some polynomial $R$. 
So rewriting $h$ defined by \eqref{defh} by using these 
expressions, we have 
\begin{align*}
h = &  G_1e_3\{e_3,a_1\}+G_2e_3\{e_3,b_3\}+G_3e_3\{e_3,\varphi_4\}
\\ & +R(-3b_3\{e_3,a_1\}+a_1\{e_3,b_3\})
\\ & +\frac{Q_1}{4}(4\varphi_4\{e_3,a_1\}-a_1\{e_3,\varphi_4\})
+\frac{Q_2}{4}(4\varphi_4\{e_3,b_3\}-3b_3\{e_3,\varphi_4\}.
\end{align*}
Here by the Jacobi identity \eqref{rel2}, we have 
\begin{align*}
 -3b_3\{e_3,a_1\}+a_1\{e_3,b_3\} &=3e_3\{a_1,b_3\}, 
\\  4\varphi_4\{e_3,a_1\}-a_1\{e_3,\varphi_4\} &=
3e_3\{\varphi_4,a_1\},
\\ 4\varphi_4\{e_3,b_3\}-3b_3\{e_3,\varphi_4\} &=
3e_3\{\varphi_4,b_3\}.
\end{align*}
So we see that $h$ is spanned over $A^*(\Gamma_0(3)^{\psi})$ by 
\begin{equation}\label{generator3}
\{a_1,b_3\}, \{a_1,\varphi_4\}, \{b_3,\varphi_4\},
e_3\{a_1,e_3\}, e_3\{b_3,e_3\}, e_3\{\varphi_4,e_3\}.
\end{equation}
The latter three forms are equal to 
$\{a_1,e_3^2\}/2$, $\{b_3,e_3^2\}/2$, $\{\varphi_4,e_3^2\}$.
Now by \eqref{rel1}, if there are Jacobi forms $\Phi_f$ and $\Phi_g$ such that $\xi(\Phi_f)=(f,0)$ and $\xi(\Phi_g)=(g,0)$, then $\xi(f\Phi_g-g\Phi_f)=
const (0,\{f,g\})$. 
So for any form $h$ in \eqref{generator3}, and also 
for any form $h$ in $A_{*,2}(\Gamma_0(3)^{\psi})$ 
spanned by forms in \eqref{generator3} over $A^+(\Gamma_0(3)^{\psi})$, 
there exists an element $\Phi$ in the module 
spanned by 
$\Phi_1$, $\Phi_3$, $\Phi_4$, $\Phi_6$ over 
$A(\Gamma_0(3)^{\psi})$ such that $\xi(\Phi)=(0,h)$. 
This means that we have 
\[
J_{*,1}^*(\Gamma_0(3)^{\psi})
=
A^+(\Gamma_0(3)^{\psi})\Phi_1+
A^+(\Gamma_0(3)^{\psi})\Phi_3+
A^+(\Gamma_0(3)^{\psi})\Phi_4+
A^+(\Gamma_0(3)^{\psi})\Phi_6.
\]
Now to show that $\Phi_1$, $\Phi_3$, $\Phi_4$, $\Phi_6$ 
are free generators of $J_{*,1}^{+}(\Gamma_0(3)^{\psi})$ 
over $A^+(\Gamma_0(3)^{\psi})$, we use the following Lemma.
 
\begin{lemma}\label{freebasis}
We write $\Gamma=\Gamma_2$, $\Gamma_0(2)$, 
$\Gamma_0(3)^{\psi}$, $\Gamma_0(4)^{\psi}$, or $\Gamma_0^0(2)^{\psi}$,
Assume that $G_i\in A_{k_i}^{I}(\Gamma)$ for 
$i=1$, $2$, $3$, $4$. If 
$\{G_1,G_i\}$ ($i=2$, $3$, $4$) are free over $A^{I}(\Gamma)$, 
then $(G_i,0)$ for $1\leq i\leq 4$ are free basis over $A^I(\Gamma)$.
\end{lemma}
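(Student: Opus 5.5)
Suppose we have a relation $\sum_{i=1}^4 f_i (G_i,0)=0$ with $f_i\in A^I(\Gamma)$ homogeneous, using the module action \eqref{moduleaction}. The plan is to use the two components of this relation separately and reduce to the assumed freeness of the brackets $\{G_1,G_i\}$.

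The first component gives
\[
\sum_i f_iG_i=0 .
\]
The second component gives
\[
\sum_i c_i\{f_i,G_i\}=0, \qquad c_i=\frac{(2\pi i)^2}{k_i(k_i+\ell_i)},
\]
where $\ell_i$ is the weight of $f_i$. Since the total weight $k=k_i+\ell_i$ is the same for every $i$, this is equivalent to
\[
\sum_i \frac{1}{k_i}\{f_i,G_i\}=0 .
\]

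Next I rewrite this vector relation in terms of $\{G_1,G_i\}$. The approach is to apply the Jacobi identity \eqref{rel2} and the Leibniz-type behaviour of the bracket to eliminate the $f_i$ inside the brackets, multiplying through by $G_1$ and using the first component. Concretely, by \eqref{rel2} applied to $(G_1,f_i,G_i)$,
\[
k_1G_1\{f_i,G_i\}=-\ell_i f_i\{G_i,G_1\}-k_iG_i\{G_1,f_i\}.
\]
An alternative formulation is cleaner. Put $\Psi=\sum f_i\Phi_i$, where $\xi(\Phi_i)=(G_i,0)$ in the setting of the lemma (or work purely with vectors). Then
\[
G_1\cdot\Psi-\sum_i f_i\,(G_1\cdot\Phi_i-G_i\cdot\Phi_1)=\Bigl(\sum f_iG_i\Bigr)\Phi_1 .
\]
By \eqref{rel1}, $G_1(G_i,0)-G_i(G_1,0)=\text{const}_i\,(0,\{G_1,G_i\})$ with a nonzero constant. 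Since the module action is associative (it reflects multiplication of Jacobi forms, or can be checked using \eqref{rel4} and \eqref{rel5}), the relation implies
\[
0=G_1\cdot\sum_i f_i(G_i,0)=\sum_{i\ge2} f_i\,\text{const}_i\,(0,\{G_1,G_i\})+\Bigl(\sum_i f_iG_i\Bigr)(G_1,0).
\]
The last term vanishes by the first component.

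This yields $\sum_{i\ge2}\text{const}_i f_i\{G_1,G_i\}=0$, so freeness of $\{G_1,G_2\},\{G_1,G_3\},\{G_1,G_4\}$ forces $f_2=f_3=f_4=0$. The first component then reduces to $f_1G_1=0$, hence $f_1=0$ because $A^I(\Gamma)$ is a domain and $G_1\neq0$. Note that $G_1\ne0$ follows from the freeness hypothesis, since $\{G_1,G_2\}\neq 0$.

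The main obstacle is justifying associativity, i.e. $G_1\cdot(f\cdot(G,0))=(G_1f)\cdot(G,0)$, for the twisted action \eqref{moduleaction} on vectors that need not come from Jacobi forms. I would either verify it directly from bilinearity and the derivation property $\{G_1f,G\}$ expanded via the Leibniz rule of $\p_{ij}$, or use Lemma \ref{merojacobi} to realize every vector as $\xi$ of a meromorphic Jacobi form. In the latter route the action is genuine multiplication of functions, and injectivity of $\xi$ (which holds equally for meromorphic forms by the invertibility of $\Theta$) transfers associativity back to the vectors.
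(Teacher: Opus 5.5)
Your proof is correct, and it reaches the paper's key relation by a different mechanism. The paper never uses the module structure on vectors. It starts from the two component equations $\sum_i F_iG_i=0$ and $\sum_i k_i^{-1}\{F_i,G_i\}=0$ and expands $\{\sum_i F_iG_i,G_1\}$ by the Leibniz rule $\{FG,G_1\}=G\{F,G_1\}+F\{G,G_1\}$. It then subtracts $k_1G_1$ times the second equation and applies \eqref{rel2} to the triple $(G_1,F_j,G_j)$ to eliminate $\{F_j,G_1\}$ and $\{F_j,G_j\}$. This ends with $\sum_{j\ge 2}(1+l_j/k_j)F_j\{G_j,G_1\}=0$.

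You instead multiply the whole relation by $G_1$ inside the module and trade $G_1(G_i,0)$ for $G_i(G_1,0)$ via \eqref{rel1}. The second component you get is the same identity up to a nonzero constant factor; note $1+l_j/k_j=k/k_j$, with $k$ the common weight. Both routes rest only on the product rule for $\p_{ij}$ and the bilinear shape of the bracket. You package these into associativity of the twisted action \eqref{moduleaction}, while the paper packages them into the bracket Leibniz rule and \eqref{rel2}.

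Your version makes the origin of the relation transparent: it is just $G_1$ times the relation. The price is that you need associativity on arbitrary vectors, which the paper never states; the paper's manipulation is self-contained. Both of your justifications of associativity work:
\begin{itemize}
\item Directly, you only compare the coefficients of $f_1f_2\,\p_{ij}g$, $f_1g\,\p_{ij}f_2$ and $f_2g\,\p_{ij}f_1$, which agree by elementary algebra.
\item Via functions, you need neither injectivity of $\xi$ nor Lemma \ref{merojacobi}. The identity $\xi(f\Phi)=f\,\xi(\Phi)$ is a formal identity among Taylor coefficients, so any $\Phi$ with the prescribed coefficients realizes a given vector, even $f_0+f_{20}z_1^2+f_{11}z_1z_2+f_{02}z_2^2$.
\end{itemize}
You also make explicit the last step $f_1G_1=0\Rightarrow f_1=0$ (using $G_1\neq 0$ and that $A^I(\Gamma)$ is a domain), which the paper leaves implicit.
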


\begin{proof}
Assume that 
\[
\sum_{i=1}^{4}F_i(G_i,0)=0
\]
for $F_i\in A^I(\Gamma)$. 
It is enough to show that $F_i=0$. We have 
By \eqref{moduleaction}, we have 
\[
F(G,0)=\biggl(FG,\frac{(2\pi i)^2}{k(k+l)}\{F,G\}\biggr)
\]
for $F\in A_{k}(\Gamma)$ and $G \in A_{l}^I(\Gamma)$.
Since weight $F_i$ plus weight $G_i$ are the same, we have 
\begin{align}
\sum_{i=1}^{4}F_iG_i & =0, \label{shiki1} \\
\sum_{i=1}^{4}\{F_i,G_i\}/k_i & = 0. \label{shiki2}
\end{align}
We have 
\[
\{F_iG_i,G_1\}=G_i\{F_i,G_1\}+F_i\{G_i,G_1\}.
\]
By \eqref{shiki1} and by $\{G_1,G_1\}=0$, we have 
\[
G_1\{F_1,G_1\}+\sum_{j=2}^{4}(G_i\{F_i,G_1\}+F_i\{G_i,G_1\})=0.
\]
Subtracting \eqref{shiki2} times $k_1G_1$ from this, we have 
\[
\sum_{j=2}^{4}(F_j\{G_j,G_1\}+G_j\{F_j,G_1\}-\frac{k_1}{k_j}G_1\{F_j,G_j\})=0.
\]
By the relation \eqref{rel2},  we have 
\[
k_1G_1\{F_j,G_j\}-k_jG_j\{F_j,G_1\}+l_jF_j\{G_j,G_1\}=0, 
\]
where $l_j$ is the weight of $F_j$. So erasing $\{F_j,G_1\}$ and 
$\{F_j,G_j\}$, we have 
\[
\sum_{j=2}^{4}(1+\frac{l_j}{k_j})F_j\{G_j,G_1\}=0.
\]
But $\{G_j,G_1\}$ are linearly independent over $A^I(\Gamma)$ by 
the assumption, so $F_j=0$. 
\end{proof} 

Now if we put  
$G_2=a_1$, $G_3=b_3$, $G_4=\varphi_4$, $G_1=e_3^2$ 
in Lemma \ref{freebasis}, then $\{a_1,e_3^2\}$, $\{b_3,e_3^2\}$, 
$\{\varphi_4,e_3^2\}$ 
are linearly independent over $A^{+}(\Gamma_0(3)^{\psi})$.
(See \cite{aokivector}, noting that 
$\{f,e_3^2\}=2e_3\{f,e_3\}$.) Or we can show this directly 
by regarding any $\{f,g\}$ as a vector of coefficients of 
$u_1^2$, $u_1u_2$, $u_2^2$ and seeing that the Fourier 
expansion of 
the determinant of the $3\times 3$ matrix
\[
(\{a_1,e_3\}, \{b_3,e_3\}, \{\varphi_4,e_3\})
\]
is given by 
\[
 16874416668672(e(-\tau_{12})-e(\tau_{12}))(e(2\tau_{11}+3\tau_{22})+ e(2\tau_{22}+3\tau_{11}))+\cdots \neq 0.
 \]

\begin{proposition}
As a $A^+(\Gamma_0(3)^{\psi})$ module, 
$J_{*,1}(\Gamma_0(3)^{\psi})$ is spanned by 
$\Phi_1$, $\Phi_3$, $\Phi_4$, $\Phi_6$. 
These are free generators and dimensions are given by 
\[
\sum_{k=1}^{\infty}
\dim J_{k,1}^{+}(\Gamma_0(3)^{\psi})t^{k}
=
\frac{t+t^3+t^4+t^6}
{(1-t)(1-t^3)^2(1-t^4)}.
\]
\end{proposition}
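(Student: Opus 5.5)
The proposition collects what has just been established, so the plan is to assemble three ingredients: spanning, freeness and the dimension count.

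\emph{Spanning.} This is the argument just given. For $\Phi\in J_{k,1}^{+}(\Gamma_0(3)^{\psi})$ we have $\xi_0(\Phi)\in\fM$, since the $u_1u_2$-condition excludes a term $ce_3$ in weight $3$. Hence there is $\Phi_0$ in the span of $\Phi_1,\Phi_3,\Phi_4,\Phi_6$ with $\xi_0(\Phi-\Phi_0)=0$. Then $\xi(\Phi-\Phi_0)=(0,h)$, and $h$ satisfies $W(h_{11})=W(h^*_{11})=0$. By the analysis of \eqref{defh}--\eqref{level3relation}, such an $h$ lies in the $A^+(\Gamma_0(3)^{\psi})$-span of the six brackets \eqref{generator3}. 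By \eqref{rel1} each of these brackets is $\xi$ of a combination $f\Phi_g-g\Phi_f$ of the four generators. Since $\xi$ is injective, $\Phi$ lies in their span.

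One point needs a word: the general $h\in A^+_{k,2}(\Gamma_0(3)^{\psi})$ is an $A^+$-combination of the brackets $\{x,y\}$ of the generators $a_1,b_3,e_3,\varphi_4$. This follows from \cite{aokivector}, after rewriting the generators there in terms of $a_1,b_3,e_3,\varphi_4$ by bilinearity and the Leibniz rule. The brackets not involving $e_3$ already satisfy both Witt conditions, so it suffices to treat $h$ of the form \eqref{defh} modulo those.

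\emph{Freeness.} Apply Lemma \ref{freebasis} with $G_1=e_3^2$, $G_2=a_1$, $G_3=b_3$, $G_4=\varphi_4$. The required independence of $\{a_1,e_3^2\},\{b_3,e_3^2\},\{\varphi_4,e_3^2\}$ was shown via the nonvanishing of the $3\times3$ determinant. This gives that the vectors $(G_i,0)=\xi(\Phi_i)$ are free over $A^+$. Since $\xi$ is injective and compatible with the module action \eqref{moduleaction}, $\Phi_1,\Phi_3,\Phi_4,\Phi_6$ are free.

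\emph{Dimension.} Freeness together with $A^+(\Gamma_0(3)^{\psi})=\C[a_1,b_3,e_3,\varphi_4]$, with weights $1,3,3,4$, immediately gives the series
\[
\frac{t+t^3+t^4+t^6}{(1-t)(1-t^3)^2(1-t^4)}.
\]

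The main obstacle is the step in the spanning argument that rests on \cite{aokivector}: making sure every element of $A^+_{k,2}$ is a combination of the six brackets with coefficients in the polynomial ring, and not merely with rational coefficients. If needed, I would cross-check this with the dimension formula of $A_{k,2}^{+}$ from \cite{aokivector}, exactly as was done for level $2$.
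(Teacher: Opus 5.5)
Your proposal is correct and follows the paper's proof essentially step for step. You show $\xi_0$ lands in $\fM$ (using $A_{3,2}(\Gamma_0(3)^{\psi})=0$ and $W(\p_{12}e_3^*)\neq 0$ to exclude a lone $e_3$), handle the remaining $(0,h)$ via the Witt analysis of \eqref{defh} together with \eqref{rel1}, and get freeness from Lemma \ref{freebasis} with $G_1=e_3^2$ and the $3\times 3$ determinant, which gives the dimension series; like the paper, you take from \cite{aokivector} that $A^+_{*,2}(\Gamma_0(3)^{\psi})$ is generated by brackets with polynomial coefficients.
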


\subsection{Structure of $J_{*,1}^-(\Gamma_0(3)^{\psi})$}
To avoid inessential constants
each time, we put 
\begin{equation}
X_{14}=\{a_1,b_3,c_4,e_3\}=2^8\cdot 3^{11}\chi_{14}.
\end{equation}
First we show that there exists a Jacobi form 
$\Phi_{14}\in J_{14,1}(\Gamma_0(3)^{\psi})$ such that 
$\xi_0(\Phi_{14})=X_{14}$. 
To show this, we first consider condition the diagonal orbit for $M=1_4$ and calculate $W(p_{12}X_{14})$. 
By \cite{ibuaoki} p. 258, we have 
\[
W(c_4)=W(\p_{ij}c_4)
=W(\p_{12}\p_{ii}c_4)=0 \qquad (i,j=1,2).
\]
So $W(\{\p_{12}a_1,b_3,c_4,e_3\})=0$
since the components of the first column and the third column are $0$ except for the third row. Similarly we have $W(\{a_1,\p_{12}b_2,c_4,e_3\})
=W(\{a_1,b_3,c_4,\p_{12}e_3\})=0$. The third row of $W(\{a_1,b_3,\p_{12}c_4,e_3\})$ is zero except for the (3,3) component, so we have 
\[
W(\p_{12}X_{14})=W(\p_{12}^2c_4)\{a_1,b_3,e_3\}_{11}/2.
\]
Here $W(\p_{12}^2c_4)$  is in the symmetric tensor of
$A_6(\Gamma_0^{(1)}(3))$, since $2\p_{12}^2-\p_{11}\p_{22}$ 
is a differential operator preserving automorphy under the Witt 
operator increasing the weight by $2$, and $W(\p_{11}\p_{22}c_4)=0$. 
For $\tau_{ii}\in H_1$, we put 
\begin{align*}
\F_1(\tau_{11})& =\theta_{A_2}(\tau_{11})=
\sum_{x\in \Z^2}e(xA_2\,^{t}x\tau_{11}/2), 
\\ 
\F_2(\tau_{11})& =\theta_{E_6}(\tau_{11})=
\sum_{x\in \Z^6}e(xE_6\,^{t}x\tau_{11}/2).
\end{align*}
Then $\oplus_{k=0}^{\infty}A_k(\Gamma_0^{(1)}(3)^{\psi})
=\C[\F_1,\F_2]$. (See \cite{ibuaoki} p. 258. Although $\F_2$ is of weight $3$, we do not write it as $\F_3$ to make it consistent with 
notation in \cite{ibuaoki}.)
We also put $\G_1=\F_1(\tau_{22})$, $\G_2=\F_2(\tau_{22})$.
By \cite{ibuaoki} loc., cit., we have 
\begin{align*}
W(a_1) & = \F_1\G_1, \\
W(\theta_{E_6}) & = \F_2\G_2, \\
W(\theta_{E_{6s}})& = (4\F_1^3-\F_2)(3\G_1^3-\G_2)/9, 
\\
W(e_3) & = 4(3\F_1^3-\F_2)(3\G_1^3-\G_2).
\end{align*}

Calculating the Fourier coefficients, or seeing that 
$W(\p_{12}^2\chi_{10})=2\Delta(\tau_{11})\Delta(\tau_{22})$ and 
$\Delta(\tau_{11})=-(\F_1^3-\F_2)(3\F_1^3-\F_2)/432$,  we see that 
\begin{align*}
W(\p_{12}^2c_4)& =
\frac{1}{3^5}(\F_1^3-\F_2)(3\F_1^3-\F_2)
(\G_1^3-\G_2)(3\G_1^3-\G_2)
\\ & =W(e_3)\times \frac{1}{2^2\cdot 3^5}(\F_1^3-\F_2)(\G_1^3-\G_2).
\end{align*}
If we put 
\[
f_3=-12\a_1^3+3\theta_{E_6}+9\theta_{E_{6s}}, 
\]
then we have 
\[
W(f_3)=4(\F_1^3-\F_2)(\G_1^3-\G_2),\quad 
W(\p_{12}^2c_4)=2^{-4}\cdot 3^{-5}W(e_3f_3).
\]
This means that we have 
\[
W\left(\frac{2\pi i}{14}\frac{\p X_{14}}{\p \tau_{12}}
-\frac{(2\pi i)^2}{14\cdot 2^5\cdot 3^5}e_3f_3\{a_1,b_3,e_3\}_{11}\right)
=0,
\]
so the condition \eqref{wittcondition} is satisfied for 
$(X_{14},\frac{(2\pi i)^2}{14}h_0)$ for $M=1_4$, where 
$h_0=-2^{-5}\cdot 3^{-5}e_3f_3\{a_1,b_2,e_3\}$ for $M=1_4$.

Next we calculate $W(\p_{12}X_{14}^*)$. Here 
we write $f^*=f|_{k}[N]$ as before.
To see the Witt image, we change the expression of $X_{14}$
slightly. We have $\varphi_4^*=\varphi_4$ and 
$W(\p_{ij}\varphi_4^*)=0$ for any $i$, $j$ ($1\leq i,j\leq 2$).
In order to use this fact, we change $c_4$ by $\varphi_4$ as follows.
By \cite{ibuaoki} p. 259, we have 
\[
c_4=\frac{1}{162}(-27a_1^4+12a_1b_3+a_1e_3-\varphi_4).
\]
So we have
\[
X_{14}=-\frac{1}{162}\{a_1,b_3,\varphi_4,e_3\}.
\]
We have 
\begin{align*}
W(\p_{12}a_1^*) & =W(\p_{12}\p_{11}a_1^*)=W(\p_{12}\p_{22}a_1^*)=0, \\
W(\p_{12}b_3^*)& = W(\p_{12}\p_{11}b_3^*)=W(\p_{12}\p_{22}b_3^*)=0, \\
W(\p_{12}\varphi_4) & = W(\p_{12}\p_{11}\varphi_4)=W(\p_{12}\p_{22}\varphi_4)=0,\\
W(e_3^*) & = W(\p_{11}e_3^*)=W(\p_{22}e_3^*)=0.
\end{align*}
So we have
\[
W(\{\p_{12}a_1^*,b_3^*,\varphi_4,e_3^*\})=
W(\{a_1^*\p_{12}b_3^*,\varphi_4,e_3^*\})=
W(\{a_1^*,b_3^*,\p_{12}\varphi_4,e_3^*\})=0,
\]
since the $(i,4)$ components for $i\neq 3$ are all 
$0$ and the components of the derivatives by $\p_{12}$ of 
the $j$-th column of $\{a_1^*,b_3^*,c_4^*,\varphi_4\}$ 
for $j\neq 4$ are all $0$ except for the third component.  
The third row of $\{a_1^*,b_3^*,\varphi_4,\p_{12}e_3^*\}$ 
are all $0$ except for the fourth column, so we have  
\begin{align*}
W\biggl(
\p_{12}\{a_1^*,b_3^*,\varphi_4,e_3^*\}\biggr)
 & =-W(\p_{12}^2e_3^*)W(\{a_1^*,b_3^*,\varphi_4\}_{11})/2, 
\\  
W(\p_{12}X_{14}) & 
=\frac{1}{2\cdot 162}W(\p_{12}e_3^*)W(\{a_1^*,b_3^*,\varphi_4\}_{11}).
\end{align*}
Here $W(\p_{12}^2e_3^*)$ is a tensor of modular 
forms of one variable belonging to some
discrete group by the same reason as before.

We first explain how to obtain $e_3^*$.
We have 
\[
K=JM_2,\quad M_2=\begin{pmatrix} 1_2 & S_0 \\ 0 & 1_2 
\end{pmatrix} \text{ for } S_0=\begin{pmatrix} 0 & 1 \\ 1 
& 0 \end{pmatrix}, \quad J=\begin{pmatrix} 0 & -1_2 \\ 1_2 & 0 \end{pmatrix}.
\]
Here the Fourier expansion of $f|[M_2]$ 
is easily obtained by the Fourier expansion of 
$f$, so the problem is to write down $f|[J]$.
By \cite{andrianovmaloletkin}, for an even positive integer $m$ and for any $m\times m$
symmetric matrix $S$ and $\tau\in H_n$, 
we have 
\begin{equation}
\theta_{S}(-\tau^{-1})=\det(-i\tau)^{m/2}\det(S)^{-n/2}
\theta_{S^{-1}}(\tau),
\end{equation}
so we have 
\begin{equation}\label{inverseformula}
\theta_{S}|_{m/2}[J]=\det(\tau)^{-m/2}\theta_{S}(-\tau^{-1})
=(-i)^{nm}\det(S)^{-n/2}\theta_{S^{-1}}(\tau).
\end{equation}
Applying this formula for $n=2$, we have 
\begin{align*}
(\theta_{A_2}|_1[J])(\tau)& =-(1/3)\vartheta_{A_2/3}
(\tau)=(-1/3)\theta_{A_2}(\tau/3), 
\\
(\theta_{E_6}|_3[J])(\tau)& = -(1/3)\theta_{E_6^{-1}}(\tau)
=-(1/3)\theta_{E_{6s}/3}(\tau)=(-1/3)\theta_{E_{6s}}(\tau/3), \\
(\theta_{E_{6s}}|_{3}[J])(\tau)& = -(1/3^5)\theta_{E_{6s}^{-1}}(\tau)=-(1/3^5)\theta_{E_6}(\tau/3),\\
\varphi_4|_4[J]& =\varphi_4.
\end{align*}
So it is not difficult to calculate 
many Fourier coefficients of $W(f^*)$ 
for concrete $f$.

We determine the discete group to which $W(f^*)$ should belong.
For $P_i=\begin{pmatrix} p_i & q_i \\ r_i & s_i \end{pmatrix}
\in SL_2(\Z)$, we should consider the condition
\[
K\begin{pmatrix} p_1 & 0 & q_1 & 0 \\ 0 & p_2 & 0 & q_2 \\ r_1 & 0 & s_1 & 0 \\ 0 & r_2 & 0 & s_2 
\end{pmatrix}
K^{-1}
=
\begin{pmatrix} s_1 & -r_1 & -r_1 & 0 \\
-r_2 & s_2 & 0  & -r_2 \\
r_2-q_1 & p_1-s_2 & p_1 & r_2 \\ 
p_2-s_1 & r_1-q_2 & r_1 & p_2 
\end{pmatrix}
\in \Gamma_0(3).
\]
This means 
\[
P_2\equiv S_0P_1S_0 \bmod 3 \qquad \text{ for } S_0=\begin{pmatrix}
0 & 1 \\1 &0 \end{pmatrix}.
\]
So we put 
\[
\Gamma_{S_0}(3)=\{(P_1,P_2); P_i\in SL_2(\Z), P_2
\equiv S_0P_1S_0\bmod 3\}.
\]
For $(P_1,P_2)\in \Gamma_{S_0}(3)$, we have 
\[
\begin{vmatrix} p_1 & r_2 \\ r_1 & p_2 \end{vmatrix}
\equiv p_1s_1-r_1q_1 = 1\bmod 3.
\]
So we have 
$K\iota(P_1,P_2)K^{-1}\in \Gamma_0(3)^{\psi}$ and 
$f^*|_k[\iota(P_1,P_2)]=f^*$. 
Now we put $U=\begin{pmatrix} S_0 & 0 \\ 0 & S_0 \end{pmatrix}$.
Then we have $KU=UK$. 
Since $UI\in \Gamma_0(3)^{\psi}$, we have 
$f|[U]=f|[I]$, and we also have $f|[I]=(-1)^kf$ because 
$f\in A_k^+(\Gamma_0(3)^{\psi})$.  
So we have 
\[
f^*|[U]=f|[K][U]=f|[U][K]=f|[I][K]=(-1)^k f|[K]=(-1)^k f^*.
\]
This means that 
\[
f^*\begin{pmatrix} \tau_{22} & \tau_{12} \\ \tau_{12} & \tau_{11} \end{pmatrix}=
f^*\begin{pmatrix} \tau_{11} & \tau_{12} \\ \tau_{12} & \tau_{22} \end{pmatrix}.
\]
By the above considerations, we have the following two conditions. \\
(1) $W(f^*)|_{k}[\iota(A_1,A_2)]=
W(f^*)$ for $(A_1,A_2)\in \Gamma_3$. \\
(2) We have 
$W(f^*)(\tau_{22},\tau_{11})=W(f^*)(\tau_{11},\tau_{22})$.

If we put 
$\Gamma(3)=\{g\in SL_2(\Z);g\equiv 1_2 \bmod 3\}$,
then we have $\Gamma(3)\subset \Gamma_{S_0}(3)$. 
Using the notation in \cite{ebeling} p.135, we put 
\begin{align*}
\theta_0(\omega) &= \sum_{x,y\in \Z}q^{x^2-xy+y^2}
=1+6(q+q^3+q^4+\cdots), \\
\theta_{1}(\omega)&=q^{1/3}\sum_{z,y\in \Z}
q^{x^2-xy+y^2+x-y}=3q^{1/3}(1+q+2q^2+\cdots),
\end{align*}
where we write $q=e(\omega)$ for $\omega\in H_1$. 
Here we have $\theta_0(\omega)=\theta_{A_2}(\omega)$. 
Then by \cite{ebeling} Theorem 5.4,  we have 
\[
A(\Gamma(3))=\C[\theta_0,\theta_1].
\]
For the sake of simplicity, we put 
\[
x_0=\theta_0(\tau_{11}),\quad x_1=\theta_1(\tau_{11}),
\quad y_0=\theta_0(\tau_{22}), 
\quad y_1=\theta_{1}(\tau_{22}).
\]
By calculating sufficiently many Fourier coefficients, we can show that 
\begin{align*}
W(a_1^*)= & -(x_0y_0+2x_1y_0+2x_0y_1-2x_1y_1)/3, \\
W(b_3^*) = & -(x_0^3 y_0^3 + 6 x_0 x_1^2 y_0^3 
+ 2 x_1^3 y_0^3 + 6 x_0^3 y_0 y_1^2 - 
 18 x_0 x_1^2 y_0 y_1^2 + 12 x_1^3 y_0 y_1^2 
 \\ & + 2 x_0^3 y_1^3 + 12 x_0 x_1^2 y_1^3 + 4 x_1^3 y_1^3)/3,\\
W(e_3^*)  =  & \,0, \\
W(\varphi_4)  = & (x_0^4+8x_0x_1^3)(y_0^4+8y_0y_1^3), \\
W(c_4^*)= & \frac{8}{81}x_1y_1(x_0-x_1)(y_0-y_1)
(x_0^2+x_0x_1+x_1^2)(y_0^2+y_0y_1+y_1^2).
\end{align*}
On the other hand, $W(\p_{12}^2e_3^*)$ 
belongs to the symmetric tensor of $A_5(\Gamma(3))$.
Indeed as explained in \eqref{EZdiff},
the function $W(((3/2)\p_{12}^2-\p_{11}\p_{22})e_3^*)$ is modular, but 
$W(\p_{ii}e_3^*)=W(\p_{11}\p_{22}e_3^*)=0$, so $W(\p_{12}^2e_3^*)$ is modular. 
More concretely, by using the relation  
\[
e_3^*=\theta_{E_6}^*-12(\theta_{A_2}
^{*})^3+27\theta_{E_{6s}}^*,
\]
we have    
\begin{align*}
W(\p_{12}^2e_3^*)
= & 
\frac{16}{9}(x_0 y_0 +   2 x_1 y_0 + 2 x_0 y_1 - 2 x_1 y_1) \times
\\ & x_1y_1(x_0 -x_1)(y_0-y_1) (x_0^2 + x_0 x_1 + x_1^2)  
(y_0^2+y_0y_1+y_1^2)
\\ = &  -2\cdot 3^3 W(a_1^*c_4^*).
\end{align*}
So if we define 
\[
H=\frac{(2\pi i)^2}{14}\biggl(-\frac{1}{2^5\cdot 3^5}e_3f_3\{a_1,b_3,e_3\}+\frac{(2\pi i)^2}{14}\cdot \frac{3^3}{162} a_1c_4\{a_1,b_3,\varphi_4\}\biggr)\in A_{14,2}(\Gamma_0(3)^{\psi}),
\]
then since $W(e_3^*)=W(c_4)=0$, a pair $(X_{14},H)$
satisfies the condition \eqref{wittcondition} both for $M=1_4$ and $K$, so there exists $\Phi_{14}$ such that 
$\xi(\Phi_{14})=(X_{14},H)$.
Now for any form $\Phi \in J_{k,1}^{-}(\Gamma_0(3)^{\psi})$,
we have $\xi_0(\Phi)\in A^{-}(\Gamma_0(3)^{\psi})=X_{14}A^+(\Gamma_0(3)^{\psi})$, 
so we may assume that $\xi_0(\Phi-f\Phi_{14})=0$ 
for some $f\in A^+(\Gamma_0(3)^{\psi})$.
So we will determine $\Phi$ such that $\xi_0(\Phi)=0$.
This is equivalent to determine $h\in A_{*,2}^{-}(\Gamma_0(3)^{\psi})$
such that  $W(h_{11})=W(h^*_{11})=0$. 
By \cite{aokivector}, there exists $F_i \in A^+(\Gamma_0(3)^{\psi})$ such that 
\[
h=F_1\{a_1,b_3,e_3\}+F_2\{a_1,b_3,c_4\}+F_3\{a_1,c_4,e_3\}+F_4\{b_3,c_4,e_3\}.
\]
Since $W(c_4)=W(\p_{ii}c_4)=W(e_3^*)=W(\p_{ii}e_3^*)=0$ for $i=1$, $2$, it is clear by definition that 
\[
W(\{f,c_4,e_3\}_{11})=W(\{f^*,c_4^*,e_3^*\}_{11})=0
\]
for any $f \in A^+(\Gamma_0(3)^{\psi})$. 
In particular, there exist $\Phi_{9}\in J_{9,1}^-(\Gamma_0(3)^{\psi})$ and $\Phi_{11}
\in J_{11,1}(\Gamma_0(3)^{\psi})$ such that 
\[
\xi(\Phi_9)=(0,\{a_1,c_4,e_3\}),  
\quad 
\xi(\Phi_{11})=(0,\{b_3,c_4,e_3\}).
\]
So we may assume $F_3=F_4=0$ 
and the condition becomes
\begin{align*}
W(h_{11})& =W(F_1)W(\{a_1,b_3,e_3\}_{11})=0, \\
W(h_{11}^*)&=W(F_2^*)W(\{a_1^*,b_3^*,c_4^*\}_{11})=0.
\end{align*}
By seeing the Fourier expansion, we can show that 
\begin{align*}
& W(\{a_1,b_3,e_3\}_{11}/2)=1889568(e(\tau_{22}+2\tau_{11})-e(\tau_{11}+2\tau_{22}))+\cdots \neq 0,  
\\ & W(\{a_1^*,b_3^*,c_4^*\}_{11}/2)
=\frac{16}{243}(-e((2\tau_{11}+\tau_{22})/3)+e((2\tau_{22}+\tau_{11})/3))+\cdots \neq 0. 
\end{align*}
This means that $W(F_1)=0$ and $W(F_2^*)=0$,
so  $F_1\in c_4A^+(\Gamma_0(3)^{\psi})$ 
and $F_2\in e_3A^+(\Gamma_0(3)^{\psi})$, and 
$h$ is in the space spanned by 
\[
c_4\{a_1,b_3,e_3\},\quad e_3\{a_1,b_3,e_3\},
\quad \{a_1,e_3,c_4\}, \quad \{b_3,e_3,c_4\}
\]
over $A^+(\Gamma_0(3)^{\psi})$. By \eqref{rel3}, 
we have 
\[
4\c_4\{a_1,b_3,e_3\}+a_1\{b_3,e_3,c_4\}
+3b_3\{e_3,c_4,a_1\}+3e_3\{c_4,a_1,b_3\}=0,
\]
so the generator $e_3\{a_1,b_3,c_4\}$ is redundant.
We denote the inverse image in $J_{*,1}(\Gamma_0(3)^{\psi})$ of 
$(0,c_4\{a_1,b_3,e_3\})$ by $\Phi_{12}$. 
Then $J_{*,1}^-(\Gamma_0(3)^{\psi})$ are generated by 
$\Phi_{14}$, $\Phi_9$, $\Phi_{11}$, $\Phi_{12}$ over 
$A^+(\Gamma_0(3)^{\psi})$. It is obvious that these are
free generators.

\begin{proposition}
$J_{*,1}^{-}(\Gamma_0(3)^{\psi})$ is 
spanned over $A^+(\Gamma_0(3)^{\psi})$ 
by $\Phi_{14}$, $\Phi_{9}$, $\Phi_{11}$, $\Phi_{12}$.
These are free generators. Dimensions are given by 
\[
\sum_{k=1}^{\infty}\dim J_{k,1}^{-}(\Gamma_0(3)^{\psi})t^k
=
\frac{t^9+t^{11}+t^{12}+t^{14}}
{(1-t)(1-t^3)^2(1-t^4)}.
\]
\end{proposition}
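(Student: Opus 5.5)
The argument preceding the statement already shows that every element of $J_{*,1}^{-}(\Gamma_0(3)^{\psi})$ lies in the $A^+(\Gamma_0(3)^{\psi})$-span of $\Phi_{14}$, $\Phi_9$, $\Phi_{11}$, $\Phi_{12}$. What remains is freeness and the dimension formula. The plan is to prove freeness directly through the injective map $\xi$, and then read off the generating function.

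For freeness, suppose
\[
F_{14}\Phi_{14}+F_9\Phi_9+F_{11}\Phi_{11}+F_{12}\Phi_{12}=0
\]
with $F_i\in A^+(\Gamma_0(3)^{\psi})$ homogeneous of the appropriate weights. First apply $\xi_0$. Since $\xi_0(\Phi_9)=\xi_0(\Phi_{11})=\xi_0(\Phi_{12})=0$ and $\xi_0(\Phi_{14})=X_{14}\neq 0$, we get $F_{14}X_{14}=0$. The ring of modular forms is an integral domain, so $F_{14}=0$. Next apply $\xi_{k,2}$. By \eqref{moduleaction}, when the $\xi_0$-part of a Jacobi form vanishes the module action reduces to $f(0,h)=(0,fh)$ as in \eqref{rel5}. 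This gives
\[
F_9\{a_1,c_4,e_3\}+F_{11}\{b_3,c_4,e_3\}+F_{12}\,c_4\{a_1,b_3,e_3\}=0 .
\]
It therefore suffices to show that $\{a_1,c_4,e_3\}$, $\{b_3,c_4,e_3\}$ and $\{a_1,b_3,e_3\}$ are linearly independent over $A^+(\Gamma_0(3)^{\psi})$; the factor $c_4$ can be cancelled, again because we are in a domain.

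For this independence I will cite the structure theorem of \cite{aokivector}. That theorem says $A_{*,2}^{-}(\Gamma_0(3)^{\psi})$ is free over $A^+$ on the four brackets $\{a_1,b_3,e_3\}$, $\{a_1,b_3,c_4\}$, $\{a_1,c_4,e_3\}$, $\{b_3,c_4,e_3\}$, and this freeness is what makes the decomposition of $h$ used above unique. Any three of the four are then independent. If I wanted to avoid relying on the precise form of that citation, the fallback would be a direct check: regard the three brackets as column vectors of their $u_1^2,u_1u_2,u_2^2$ coefficients and verify that the first Fourier terms of the resulting $3\times 3$ determinant are nonzero, exactly as was done for $J^+$. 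This is the only step with real content, and I expect it to be the main obstacle if the citation does not directly give a basis in these generators.

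For the dimension formula, freeness and the fact that the generators have weights $9,11,12,14$ give
\[
\sum_k \dim J^-_{k,1}(\Gamma_0(3)^{\psi})\,t^k=\bigl(t^9+t^{11}+t^{12}+t^{14}\bigr)\sum_k\dim A^+_k(\Gamma_0(3)^{\psi})\,t^k .
\]
Since $A^+(\Gamma_0(3)^{\psi})=\C[a_1,b_3,e_3,c_4]$ is a polynomial ring with generators of weights $1,3,3,4$, its Hilbert series is $1/\bigl((1-t)(1-t^3)^2(1-t^4)\bigr)$. Substituting this gives the formula in the statement.
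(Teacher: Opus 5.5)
Your reduction is the natural one and is exactly what the paper compresses into ``it is obvious that these are free generators''. Applying $\xi_0$ forces $F_{14}=0$. Then \eqref{rel5} turns the relation into $F_9\{a_1,c_4,e_3\}+F_{11}\{b_3,c_4,e_3\}+F_{12}c_4\{a_1,b_3,e_3\}=0$. The Hilbert series computation is also correct.

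The gap is in the one step you yourself call essential. You claim that $A^{-}_{*,2}(\Gamma_0(3)^{\psi})$ is free on $\{a_1,b_3,e_3\}$, $\{a_1,b_3,c_4\}$, $\{a_1,c_4,e_3\}$, $\{b_3,c_4,e_3\}$. That is not what \cite{aokivector} gives, and it is false. By \eqref{rel3} (a Laplace expansion of a $4\times 4$ determinant in which the row $(k_if_i)$ is repeated), these four brackets satisfy a nontrivial $A^+$-linear relation with coefficients $\pm a_1$, $\pm 3b_3$, $\pm 4c_4$, $\pm 3e_3$. This is precisely the relation the paper uses, in the spanning argument you rely on, to discard $e_3\{a_1,b_3,c_4\}$. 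More generally, the module has rank $3=\dim\Sym^2$ over $A^+$, so no four of its elements can be free. Your inference ``any three of the four are then independent'' therefore rests on a false premise, and freeness has not yet been proved.

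The conclusion is nevertheless true, and either of the following repairs it.

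(a) Use \cite{aokivector} in its correct form: the four brackets generate, with \eqref{rel3} as the only fundamental relation. A relation with zero coefficient on $\{a_1,b_3,c_4\}$ is then $G$ times \eqref{rel3} with $3Ge_3=0$, so $G=0$.

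(b) Argue directly, without the Fourier computation of your fallback. Let $M$ be the $4\times 4$ matrix with $\det M=X_{14}=\{a_1,b_3,c_4,e_3\}$. The coefficients of $u_1^2$, $u_1u_2$, $u_2^2$ in $\{f,g,e_3\}$ are $1$, $2$, $1$ times the $3\times 3$ minors of $M$ on the columns of $f,g,e_3$ and on the three row sets containing the first row. Now apply Jacobi's complementary minor theorem for the adjugate. The $3\times 3$ determinant of the coefficient vectors of $\{a_1,b_3,e_3\}$, $\{a_1,c_4,e_3\}$, $\{b_3,c_4,e_3\}$ equals $\pm 2(\det M)^2$ times the $(1,4)$ entry $3e_3$ of $M$, i.e.\ $\pm 6e_3X_{14}^2\neq 0$. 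So these three brackets are independent even over meromorphic functions.

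With either fix, your proof is complete.
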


\section{Siegel modular forms of level $4$}
We have two cases here. One is the space of 
Jacobi forms of $\Gamma_0(4)^{\psi}$ and 
the other of $\Gamma_0^0(2)^{\psi}$. 
We loosely call them the case of level $4$ and sometimes write 
$\Gamma=\Gamma_0(4)^{\psi}$ or $\Gamma=\Gamma_0^0(2)^{\psi}$.
These two groups are conjugate in $Sp(2,\Q)$ but  
the semi-direct products of $\Gamma$ and the Heisenberg part
$H(\Z)$ are not conjugate, so the space of Jacobi forms are 
different. 

\subsection{Graded rings of level $4$}
First we review the structure of the ring of scalar valued 
Siegel modular forms of any weight and vector 
valued Siegel modular forms of $\Gamma_0(4)^{\psi}$ and 
$\Gamma_0^0(2)^{\psi}$ over the ring of scalar valued 
Siegel modular forms of plus part.
These are essentially the same but 
we choose different generators for each that are 
more suitable for calculation of 
Jacobi forms. In both cases we use the same notation
$a_1$, $b_2$, $c_2$, $d_3$ of weight $1$, $2$, $2$, $3$
but definitions are different according to cases.
We use the same notation to economize the description, since 
the structure of the graded rings 
for $\Gamma_0(4)^{\psi}$ and $\Gamma_0^0(2)^{\psi}$ 
can be written completely in the same way.
We put $\Gamma=\Gamma_0(4)^{\psi}$ or 
$\Gamma_0^0(2)^{\psi}$. 

Later we will define Siegel modular forms $a_1$, $b_2$, $c_2$, $d_3$ of $\Gamma
=\Gamma_0(4)^{\psi}$ or $\Gamma_0^0(2)^{\psi}$ of 
weight $1$, $2$, $2$, $3$, respectively,
and for each case, we put 
\[
\chi_{11}=-\frac{1}{2^{18}\cdot 3}
\begin{vmatrix} 
a_1 & 2b_2 & 2c_2 & 3d_3 \\
\p_{11}a_1 & \p_{11}b_2 & \p_{11}c_2 & \p_{11}d_3 \\
\p_{12}a_1 & \p_{12}b_2 & \p_{12}c_2 & \p_{12}d_3 \\
\p_{22}a_1 & \p_{22}b_2 & \p_{22}c_2 & \p_{22}d_3
\end{vmatrix}.
\] 
Of course this depends on a choise of $\Gamma$. 
Instead of $A^{\pm}(\Gamma)$ we write 
$A^I(\Gamma)=A^+(\Gamma)$ and $A^{II}(\Gamma)=A^{-}(\Gamma)$ since $\pm$ might be a bit confusing.
We have 
\begin{theorem}
We have 
\[
A(\Gamma)=A^I(\Gamma)\oplus A^{II}(\Gamma),
\]
where
\[
A^I(\Gamma)=\C[a_1,b_2,c_2,d_3], \qquad 
A^{II}(\Gamma)=\chi_{11}\C[a_1,b_2,c_3,d_3].
\]
The modules $A_{*,2}^I(\Gamma) =\oplus_{k=0}^{\infty}A_{k,2}^{I}(\Gamma)$ and 
$A_{*,2}^{II}(\Gamma)  =\oplus_{k=0}^{\infty}
A_{k,2}^{II}(\Gamma)$ of vector valued modular forms of weight $\det^k\Sym^2$ of type $I$ and $II$ 
over $A=A^I(\Gamma)$ are 
given by 
\begin{align}\label{gamma00vectorI}
A_{*,2}(\Gamma)^{I} &  =
A\{a_1,b_2\}+A \{a_1,c_2\}+ A\{a_1,d_3\}
+A\{b_2,c_2\}+A\{b_2,d_3\}+A\{c_2,d_3\},
\\ A_{*,2}(\Gamma)^{II}
& =
A\{a_1,b_2,c_2\}+A\{a_1,b_2,d_3\}
+A\{a_1,c_2,d_3\}+A\{b_2,c_2,d_3\}. \label{gamma00vectorII}
\end{align}
(See \cite{aokivector}).
The fundamental relations of the above generators of 
the modules are just the usual 
relations in \eqref{rel2} and \eqref{rel3}.
\end{theorem}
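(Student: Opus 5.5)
This theorem restates results from \cite{ibuaoki} (the scalar-valued ring) and \cite{aokivector} (the $\Sym^2$-valued modules), so my plan is to reduce it to those papers. The real work is to check that the generators $a_1,b_2,c_2,d_3$ chosen here for $\Gamma=\Gamma_0(4)^{\psi}$ and for $\Gamma_0^0(2)^{\psi}$ are related to the generators used there by an invertible weighted change of variables.

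\emph{Step 1: the scalar ring.} Since $\gamma_2^{-1}\Gamma_0(4)\gamma_2=\Gamma_0^0(2)$ and $\gamma_2$ normalizes the character $\psi$, the map $f\mapsto f|_k[\gamma_2]$ is a graded ring isomorphism $A(\Gamma_0(4)^{\psi})\cong A(\Gamma_0^0(2)^{\psi})$. Because $\gamma_2$ commutes with $I$, this isomorphism respects the type I/II decomposition. It is therefore enough to know the structure for one group. \cite{ibuaoki} gives $A^I(\Gamma)$ as a weighted polynomial ring with generators of weights $1,2,2,3$, and $A^{II}(\Gamma)$ as a free rank-one module generated by the weight-$11$ bracket of those generators.

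To pass to the present generators, I would write each of $a_1,b_2,c_2,d_3$ as a polynomial in the old generators. These are explicit theta constants or theta series, so the comparison needs only finitely many Fourier coefficients, and by the dimension formula each graded piece has small dimension. The transformation is triangular with respect to weight: weight $1$ is one-dimensional, weight $2$ has dimension three (it contains $a_1^2$), and weight $3$ has dimension four. So invertibility reduces to checking that:
\begin{itemize}
\item $a_1\neq0$,
\item $a_1^2,b_2,c_2$ are linearly independent,
\item $d_3\notin\C a_1^3+\C a_1b_2+\C a_1c_2$.
\end{itemize}
Under such a triangular change, the bracket $\{a_1,b_2,c_2,d_3\}$ changes by a nonzero constant, because the brackets are multilinear and vanish when a polynomial in the other arguments is inserted, by the Jacobi-type identities \eqref{rel3}. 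Hence $\chi_{11}$ generates $A^{II}$. I would also check by Fourier coefficients that $\chi_{11}\neq0$, which fixes the normalization.

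\emph{Step 2: vector valued modules.} \cite{aokivector} describes $A^I_{*,2}$ and $A^{II}_{*,2}$ as generated by the two-brackets and the three-brackets of the ring generators, with relations \eqref{rel2} and \eqref{rel3}. Under a triangular polynomial change of generators, each new bracket $\{f,g\}$ is an $A$-linear combination of old brackets, by the derivation property $\{FG,H\}=F\{G,H\}+G\{F,H\}$ and \eqref{rel2}. The same holds for three-brackets. The inverse change gives the reverse inclusion, so \eqref{gamma00vectorI} and \eqref{gamma00vectorII} hold for the new generators. The relations transform likewise, since the relation modules correspond under the same base change. As a consistency check, I would compare Hilbert series with the dimension formulas of \cite{aokivector}.

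The main obstacle is Step 1's explicit identification for the two groups. This requires computing enough Fourier coefficients of the new $b_2,c_2,d_3$, using the theta transformation formulas when they are defined via $\gamma_2$-conjugates, to confirm the linear independence conditions. Everything else is formal.
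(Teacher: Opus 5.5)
Your proposal is correct, but it argues differently from what the paper indicates. The paper gives no proof of this theorem. It cites \cite{aokivector} for the $\Sym^2$-valued modules and says that the proof that the new $a_1,b_2,c_2,d_3$ are generators is ``similar to the one in \cite{ibuaoki}''. That means re-running the Aoki--Ibukiyama argument for the new forms, and for new forms of the same weights the essential input is that their bracket $\chi_{11}$ does not vanish identically.

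You instead take the structure theorems of \cite{ibuaoki,aokivector} as black boxes and transport them along a graded change of generators, checking invertibility by linear independence in weights $1,2,3$. Once the new forms are known to lie in $A^I(\Gamma)$, the two checks are equivalent. A nonzero bracket forces algebraic independence, and a polynomial subring generated in weights $1,2,2,3$ has the same Hilbert series as $A^I(\Gamma)$, hence equals it. So the paper's route needs only the single Fourier check $\chi_{11}\neq 0$, while yours makes the transfer of the vector-valued statements formal.

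The cleanest way to carry out that transfer is the identity $\{y_a,y_b\}=\sum_{i,j}(\partial y_a/\partial x_i)(\partial y_b/\partial x_j)\{x_i,x_j\}$, which follows from Euler's identity and the chain rule. The analogous identity holds for three-brackets. The Jacobian matrix has constant nonzero determinant. The Jacobi relations are contractions with the Euler field, which is intrinsic. Hence both the generators and the relation modules correspond.

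Two small corrections:
\begin{itemize}
\item Before expressing $a_1,b_2,c_2,d_3$ in the old generators, you must first check, via Proposition~\ref{transformation}, that they really lie in $A^I_k(\Gamma)$ with the correct character.
\item The vanishing of $\{f_1,f_2,f_3,P(f_1,f_2,f_3)\}$ comes from Euler's identity and the chain rule (the column of $P$ is a combination of the other columns), not from \eqref{rel3}.
\end{itemize}
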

\noindent
Here for $\Gamma=\Gamma_0(4)^{\psi}$
we put 
\begin{align*}
a_1 & =\theta_{0000}^2+\theta_{0001}^2+\theta_{0010}^2+\theta_{0011}^2, \\
b_2 & = \theta_{0000}^4+\theta_{0001}^4+\theta_{0010}^4+\theta_{0011}^4,\\
c_2 & = \theta_{0000}\theta_{0001}\theta_{0010}\theta_{0011}, \\
d_3 & = \theta_{0000}^6+\theta_{0001}^6+\theta_{0010}^6+\theta_{0011}^6.
\end{align*}
For $\Gamma=\Gamma_0^0(2)^{\psi}$ we put 
\begin{align*}
a_1 & = \theta_{0000}^2,\\
b_2 & = \theta_{0000}^4+\theta_{0001}^4+\theta_{0010}^4
+\theta_{0011}^4, \\
c_2 & = \theta_{0000}^4+\theta_{0100}^4+\theta_{1000}^4+\theta_{1100}^4,\\
d_3 & = (\theta_{0001}\theta_{0010}\theta_{0011})^2.
\end{align*}

These generators for each $\Gamma$ are slightly different from those in 
\cite{ibuaoki, aokivector}, 
but we chose convenient generators for our purpose in this paper. 
The proof that they are generators is similar to the one in \cite{ibuaoki} and omitted here.

From next section, we prove our main theorem for 
two level $4$ cases.

\section{Proof for $\Gamma_0(4)^{\psi}$}
First we describe the decomposition of $Sp(2,\Z)$-orbits of diagonals under $\Gamma_0(4)$.

\begin{lemma}\label{gamma04orbit}
We have 
\[
\Gamma_0(4)\backslash Sp(2,\Z)/\Gamma_0=
\{1_4, M_1, M_1^2\},
\]
where we put 
\[
M_1=
\begin{pmatrix} 1_2 & 0 \\
S_0  & 1_2 
\end{pmatrix}.
\]
\end{lemma}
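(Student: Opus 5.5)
The plan is to translate the double coset space into a problem over $\Z/4\Z$ and classify orbits of Lagrangian submodules under $SL_2\times SL_2$. Lemma \ref{rightrep} is stated only for $p$ prime, so I would not use it directly; instead I would use that the reduction map $Sp(2,\Z)\to Sp(2,\Z/4\Z)$ is surjective and that $\Gamma_0(4)$ is the full preimage of the parabolic subgroup $\{C\equiv 0\}$.

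\emph{Step 1: Lagrangians.} Send $g=\begin{pmatrix} A & B\\ C & D\end{pmatrix}$ to the row span $L(g)\subset(\Z/4\Z)^4$ of $(C\ D)\bmod 4$. This is a free rank-two Lagrangian direct summand for the standard symplectic form. For $\gamma\in\Gamma_0(4)$ the bottom block row of $\gamma g$ is $(C_\gamma A+D_\gamma C,\ C_\gamma B+D_\gamma D)\equiv D_\gamma(C\ D)$, so $L(\gamma g)=L(g)$. Conversely, $L(g)=L(g')$ forces $g'g^{-1}\equiv$ an element with $C\equiv 0\bmod 4$. By surjectivity onto $Sp(2,\Z/4\Z)$, this gives a bijection
\[
\Gamma_0(4)\backslash Sp(2,\Z)\cong\{\text{free Lagrangian summands of }(\Z/4\Z)^4\},
\]
and right multiplication by $\gamma_0\in\Gamma_0$ corresponds to $L\mapsto L\gamma_0$.

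\emph{Step 2: the decomposition and the invariant.} Order coordinates as $(c_1,c_2,d_1,d_2)$. Then $(\Z/4\Z)^4=P_1\perp P_2$ with $P_i$ the hyperbolic plane in $(c_i,d_i)$. The factor $\iota(SL_2(\Z)\times SL_2(\Z))$ acts through $SL_2(\Z/4\Z)$ on each $P_i$ separately. The invariant is the isomorphism type of $L\cap P_1$, which is free of rank one, or $0$, or $\cong 2\Z/4\Z$; by the symmetry of $L$ the same type occurs for $L\cap P_2$. For the proposed representatives:
\begin{itemize}
\item[] For $1_4$, $L=\langle(0,0,1,0),(0,0,0,1)\rangle$ and $L\cap P_1$ is free.
\item[] For $M_1$, $L=\langle(0,1,1,0),(1,0,0,1)\rangle$ and $L\cap P_1=0$.
\item[] For $M_1^2$, $L=\langle(0,2,1,0),(2,0,0,1)\rangle$ and $L\cap P_1=\langle(0,0,2,0)\rangle\cong 2\Z/4\Z$.
\end{itemize}
So the three cosets are pairwise distinct.

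\emph{Step 3: each type is a single orbit.}
\begin{itemize}
\item[] \emph{Free case.} $L=\ell_1\oplus\ell_2$ with $\ell_i\subset P_i$ free lines, because the intersections are isotropic and $L$ is a summand of rank $2$. $SL_2(\Z/4\Z)$ is transitive on free lines (unimodular vectors), so $L$ lies in the orbit of $1_4$.
\item[] \emph{Case $L\cap P_1=0$.} The projections $L\to P_1$ and $L\to P_2$ are isomorphisms, so $L$ is the graph of an isomorphism $\phi:P_1\to P_2$. $\phi$ multiplies the symplectic form by $-1$ because $L$ is isotropic. Acting by $SL_2$ on $P_2$ normalizes $\phi$ to a fixed one, giving the $M_1$ orbit.
\item[] \emph{Intermediate case.} This is the step I expect to be the main obstacle. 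I would first use $SL_2$ on each factor to move $L\cap P_1$ to $\langle 2e\rangle$ and $L\cap P_2$ to $\langle 2f\rangle$, with $e=(0,0,1,0)$ and $f=(0,0,0,1)$. Next, pick generators of $L$ of the shape $e+x$ and $f+y$ with $x\in P_2$ and $y\in P_1$; this is possible since $L\cap P_1\ne 0$ forces the reduction $L\bmod 2$ to be a split Lagrangian mod $2$ meeting $P_1$ in $\langle e\rangle$. Then reduce $x,y$ to $2\times$(a $c$-vector) using the stabilizers of $\langle 2e\rangle$, $\langle 2f\rangle$ in $SL_2(\Z/4\Z)$, i.e.\ the $\Gamma_0(2)$-type lower triangular matrices, together with the isotropy constraint $\langle e+x,f+y\rangle=0$. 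The bookkeeping may be simpler if done as a finite explicit check instead: enumerate representatives of $\Gamma_0(4)\backslash Sp(2,\Z)$ by the analogue of \cite{hashimoto} (7.4), with $b\in\{0,1,2,3\}$ and the $b\equiv 2$ entries, and for each non-split one exhibit $\gamma_0\in\Gamma_0$ moving it to $M_1$ or $M_1^2$, exactly as in the level $3$ lemma with $M_0^{-1}\cdot(\ )\in\Gamma_0$.
\end{itemize}
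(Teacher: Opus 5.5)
Your proposal is correct, and it takes a genuinely different route from the paper.

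\textbf{How the paper argues.} The paper stays with explicit matrices. It writes $\Gamma_0(4)\backslash\Gamma_0(2)=\{u(2S)\}$, with $u(S)=\begin{pmatrix}1_2&0\\ S&1_2\end{pmatrix}$ and $S$ symmetric modulo $2$. It imports $\Gamma_0(2)\backslash Sp(2,\Z)/\Gamma_0=\{1_4,M_1\}$ from Lemma \ref{level2rep}, so representatives can be taken among the $u(2S)$ and the $u(2S)M_1$. It then decides equivalences by reading entries of $u(2S_1)\,g\,u(-2S_2)$ modulo $4$ for $g\in\Gamma_0$. The parity of the off-diagonal entry of $S$ separates $1_4$ from $u(2S_0)=M_1^2$. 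Every $u(2S)M_1$ is moved to $M_1$ by taking $q_i=0$, $p_i=s_i=\pm1$ and suitable even $r_i$.

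\textbf{How your route compares.} You instead classify free Lagrangian summands $L\subset(\Z/4\Z)^4$ under $SL_2(\Z/4\Z)\times SL_2(\Z/4\Z)$ acting on $P_1\perp P_2$. This is self-contained: it needs neither Lemma \ref{rightrep} nor the level $2$ result. The invariant $|L\cap P_1|\in\{1,2,4\}$ also makes pairwise distinctness transparent, whereas the paper only says ``seeing the $(3,2)$ and $(4,1)$ components''. The paper's route is shorter given what is already proved, and it produces the representatives directly in the form used for the theta transformation table that follows.

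\textbf{Two points you leave implicit.}
\begin{itemize}
\item The isotropic subgroup $2P_1\cong(\Z/2\Z)^2$ must be excluded as a value of $L\cap P_1$. Since $L$ is a summand, $L\cap 2V=2L$, so $L\cap P_1=2P_1$ would force $L\bmod 2=P_1\bmod 2$, which is not isotropic.
\item ``By symmetry'' should be the identity $L\cap P_2=\pi_2(L)^{\perp}$ in $P_2$, which follows from $L=L^{\perp}$. It gives $|L\cap P_2|=|L\cap P_1|$. You need this to get $L=\ell_1\oplus\ell_2$ by counting in the free case, and to get the graph description in the case $L\cap P_1=0$.
\end{itemize}

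\textbf{The intermediate case.} The case you flagged closes in a few lines, so no fallback enumeration is needed.
\begin{itemize}
\item Since $2e\in L\cap 2V=2L$, some $l_1\in L$ has $P_1$-component $u\equiv e\bmod 2$. Any element of $SL_2(\Z/4\Z)$ sending $u$ to $e$ fixes $2e=2u$.
\item Doing the same on $P_2$ gives generators $e+x$ and $f+y$ with $x\in 2P_2$ and $y\in 2P_1$.
\item Replace $e+x$ by $e+x-2\alpha(f+y)$ and $f+y$ by $f+y-2\beta(e+x)$, using $2(f+y)=2f$ and $2(e+x)=2e$. This leaves $x=2\delta f'$ and $y=2\epsilon e'$, where $e'=(1,0,0,0)$ and $f'=(0,1,0,0)$.
\item Isotropy reads $2\delta-2\epsilon\equiv 0 \pmod 4$. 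The choice $\delta=\epsilon=0$ would make $L\cap P_1=\langle e\rangle$ free, so $\delta=\epsilon=1$.
\item Hence $L$ is spanned by $(0,2,1,0)$ and $(2,0,0,1)$, which is the class of $M_1^2$.
\end{itemize}
As a check, the orbits of $1_4$, $M_1$, $M_1^2$ have sizes $36$, $48$, $36$, summing to $120=[Sp(2,\Z):\Gamma_0(4)]$.
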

Of course we have $M_1^2=\begin{pmatrix} 1_2 &  0 \\ 2S_0 & 1_2 \end{pmatrix}$.
\begin{proof}
For any $2\times 2$ symmetric matrix $S$, we put 
\[
u(S)=\begin{pmatrix} 1_2 & 0_2 \\ S & 1_2 \end{pmatrix}.
\]
We have 
\[
\Gamma_0(4)\backslash \Gamma_0(2)
=
\{u(2S); S=\,^{t}S\in M_2(\Z), S\bmod 2\Z\},
\]
where $S\bmod 2\Z$ means $S$ runs over representatives
modulo $2$.
Indeed, if $g=\begin{pmatrix} A & B \\ 2C & D \end{pmatrix}
\in \Gamma_0(2)$, then 
\[
g \times u(2S)^{-1}=\begin{pmatrix} A -2BS & B \\ 2(C-DS) & D
\end{pmatrix}.
\]
Since $\det(D)$ is odd and $D^{-1}C$ is symmetric, 
we may choose $S$ such that $2(C-DS)\equiv 0\bmod 4$, so we have $g\in \Gamma_0(4)u(2S)$. 
Now we already know that 
\[
\Gamma_0(2)\backslash Sp(2,\Z)/\Gamma_0=
\{1_4,M_1\},
\]
so representatives in the present case can be taken in 
$u(2S)$ or $u(2S)M_1$. Here $u(2S)$ and $u(2S)M_1$ are never equivalent since they are not equivalent for 
$\Gamma_0(2)$. We assume that $u(2S_1)gu(-2S_2)\in \Gamma_0(4)$ 
for some $g \in \Gamma_0$ and $2\times 2$ symmetric matrices $S_1$, $S_2$. We write $S_i=\begin{pmatrix}
x_i & y_i \\ y_i & z_i \end{pmatrix}$ and 
$g=\iota\left(\begin{pmatrix} p_1 & q_1 \\ r_1 & s_1\end{pmatrix},
\begin{pmatrix} p_2 & q_2 \\ r_2 & s_2 \end{pmatrix}\right)
\in Sp(2,\Z)$ for $\begin{pmatrix} p_i & q_i \\ r_i & s_i \end{pmatrix}\in 
SL_2(\Z)$.
Seeing $(3,1)$ and $(4,2)$ component of 
$u(2S_1)gu(-2S_2)\in \Gamma_0(4)$, we see that 
$r_1$, $r_2$ are even. So $p_i$, $s_i$ should be odd.
Then we see $y_1\equiv y_2 \bmod 2$ if 
$u(2S_1)gu(-2S_2)\in \Gamma_0(4)$. When $y_1=y_2=0$ 
or $y_1=y_2=1$, we see that 
$u(2S_1)$ and $u(2S_2)$ are equvalent 
by taking $q_1=q_2=0$ and $p_i=s_i=1$ and even 
$r_i$ suitably.
When $(y_1,y_2)=(0,1)$ or $(1,0)$, then seeing 
the $(3,2)$ and $(4,1)$ components, we see that 
$u(2S_1)$ and $u(2S_2)$ are not equivalent.
So we have different representatives for 
$S=0$ and $S=S_0$. 
Next we consider $u(2S_1)M_1gM_1^{-1}u(-2S_2)$.
We write $2S_i+S=\begin{pmatrix} x_i & y_i\\
y_i & z_i \end{pmatrix}$. Then $y_1\equiv y_2 \equiv 1 
\bmod 2$.
Taking $q_1=q_2=0$, the left lower block is given by
\[
\begin{pmatrix} x_1p_1-s_1x_2 + r_1 & y_1p_2-s_1y_2 \\
y_1p_1-s_2y_2 & z_1p_2-s_2z_2+r_2 \end{pmatrix}.
\]
By taking $r_i$ suitably, we can make both $(1,1)$ and $(2,2)$ components to be divisible by $4$. 
Now if $y_1\equiv y_2 \bmod 4$, then taking 
$p_1=p_2=s_1=s_2=1$, this matrix is divisible by $4$. 
If $y_1\equiv y_2+2\bmod 4$, then 
taking $p_1=s_1=1$ and $p_2=s_2=-1$, we have 
$y_1p_2-s_1y_2 = -(y_1+y_2)=-2y_2-2 \equiv 0 \bmod 4$
and $y_1p_1-s_2y_2= y_1+y_2\equiv 2y_2+2 \equiv 0 
\bmod 4$ since $y_i$ is odd. 
Hence all these are equivalent to $M_1$ itself.
This proves the claim.
\end{proof}
By Proposition \ref{transformation}, we see 
\[
\begin{array}{|l|c|} \hline
\text{\hspace{4ex}Transformations}    &  \text{Witt images} \\ \hline 
a_1   = \theta_{0000}^2+\theta_{0001}^2+\theta_{0010}^2+
\theta_{0011}^2  & (A^2+B^2)(a^2+b^2) \\
b_2   = \theta_{0000}^4+\theta_{0001}^4+\theta_{0010}^4+\theta_{0011}^4 & (A^4+B^4)(a^4+b^4) \\
c_2  = \theta_{0000}\theta_{0001}\theta_{0010}\theta_{0011} & A^2B^2a^2b^2 \\
d_3  = \theta_{0000}^6+\theta_{0001}^6+\theta_{0010}^6+\theta_{0011}^6  &  (A^6+B^6)(a^6+b^6) \\ \hline 
a_1|[M_1]  = \theta_{0000}^2+\theta_{1001}^2+\theta_{0110}^2-\theta_{1111}^2 & A^2a^2+B^2c^2+C^2b^2 \\
b_2|[M_1] = \theta_{0000}^4+\theta_{1001}^4+\theta_{0110}^4+\theta_{1111}^4 & A^4a^4+B^4c^4+C^4b^4 \\
c_2|[M_1] = e(-1/4)\theta_{0000}\theta_{1001}\theta_{0110}\theta_{1111} & 0 \\
d_3|[M_1] = \theta_{0000}^6+\theta_{1001}^6+\theta_{0110}^6-\theta_{1111}^6 & A^6a^6+B^6c^6+C^6b^6 \\ \hline 
a_1|[M_1^2] =  a_1 &  (A^2+B^2)(a^2+b^2) \\
b_2|[M_1^2] =  b_2 & (A^4+B^4)(a^4+b^4) \\
c_2|[M_1^2] =  -c_2 & -A^2B^2a^2b^2 \\
d_3|[M_1^2] = d_3 & (A^6+B^6)(a^6+b^6) \\ \hline 
\end{array}
\]
We put
\begin{align*}
f_3 & =d_3+2^{-1}a_1(a_1^2-3b_2-6c_2), \\
g_3 & = d_3+2^{-1}a_1(a_1^2-3b_2+6c_2).
\end{align*}
We can show that $F \in \C[a_1,b_2,c_2,d_3]$ with $W(F)=0$ is 
generated by $f_3$. Since $W(K_6)=0$, we have $K_6=f_3g$ for 
some $g$, but since $g_3=f_3|_3[M_1^2]$ and $K_6|[M_1^2]=K_6$, 
we have $g_3(g|_3[M_1^2])=f_3g$.  
Hence $g|_3[M_1^2]$ is divisiblle by $f_3$, so $K_6$ is a constant 
times $f_3g_3$. Comparing the Fourier coefficients, we see    
\[
f_3g_3=-36864 K_6.
\]
So we have 
\[
\chi_{10}=-c_2^2 f_3g_3/36864.
\]
We also see that $Ker(W) \cap \C[a_1|[M_1^2],b_2|[M_1^2],c_2|[M_1^2],d_3|[M_1^2])$ is generated by $g_2$ and 
$Ker(W)\cap \C[a_1|[M_1],b_2|[M_1],c_2|[M_1],d_3|[M_1]]$
is generated by $c_2$. 
The orbit of $Sp(2,\Z)$ of diagonals are divided by 
$\{f_3=0\}$, $\{g_3=0\}$, and $\{c_2=0\}$. 

\subsection{Proof for type I}
We have 
\begin{align*}
W(\p_{12}(c_2|[M_1])& =e(-1/4)W(\theta_{0000}\theta_{1001}\theta_{0110}
\p_{12}\theta_{1111})
\\ & =\frac{\sqrt{-1}}{4}A^2B^2C^2a^2b^2c^2\neq 0.
\end{align*}
So there is no 
$\Phi\in J_{2.1}^{I}(\Gamma_0(4)^{\psi})$ such that $\xi(\Phi)=(c_2,*)$,
since $A_{2,2}(\Gamma_0(4)^{\psi})=0$. 
On the other hand, we have 
$W(\p_{12}(f|[M]))=0$ for $f=a_1$, $b_2$, $d_3$ for $M=1_4$, $M_1$, $M_1^2$ 
and $W(\p_{12}(c_2|[M_1^2]))=W(\p_{12}c_2)=0$
and $W(\p_{12}(c_2^2|[M_1])=0$. So we have 
$(a_1,0)$, $(b_2,0)$, $(c_2^2,0)$, $(d_3,0)\in 
\xi(J_{*,1}^{I}(\Gamma_0(4)^{\psi}))$.
We will show that these four elements span 
$J_{*,1}^{I}(\Gamma_0(4)^{\psi})$ over $A^I(\Gamma_0(4)^{\psi})$ 
freely. These four elements are linearly independent over 
$A^I(\Gamma_0(4)^{\psi})$ by taking $G_1=c_1^2$, $G_2=a_1$,
$G_3=b_2$, $G_4=d_3$ in Lemma \ref{freebasis} and \cite{aokivector}.
 
Next we show these span the space 
$J_{*,1}^{I}(\Gamma_0(4)^{\psi})$. 
If we put 
\[
\fM=A^I(\Gamma_0(4)^{\psi})a_1+A^I(\Gamma_0(4)^{\psi})b_2
+A^I(\Gamma_0(4)^{\psi})c_2^2+A^I(\Gamma_0(4)^{\psi})d_3,
\]
then by using \eqref{moduleaction} similarly as in the case 
of level $3$, we can show that $\xi_0(J_{*,1}^{I}(\Gamma_0(4)^{\psi})
=\fM$. So subtracting some element in $\fM$ 
from a form $\Phi\in J_{*,1}^{I}(\Gamma_0(4)^{\psi})$, 
we may assume that
$\xi(\Phi)=(0,h)$ for $h\in A_{k,2}^{I}(\Gamma_0(4)^{\psi})$. 
Here we write $W(h_{11})=W_{11}(h)$ where $h_{11}$ is 
the coefficient of $u_1u_2$ of $h$. We write 
\[
\cM=\{h\in A_{*,2}^{I}(\Gamma_0(4)^{\psi}); W_{11}((h|[M])=0 
\text{ for all $M=1_4$, $M_1$, $M_1^2$}\}.
\]
Then $(0,h)\in {\rm Image}(\xi)$ if and only if $h\in \cM$.
We will determine $\cM$. 
We have 
$W(\p_{12}a_1|[M])=W(\p_{12}b_2|[M])=W(\p_{12}d_3|[M])=0$ 
for $M=1_4$, $M_1$, $M_1^2$ and 
$W(\p_{12}c_2|[M])=0$ for $M=1_4$, $M_1^2$, but 
$W(\p_{12}(c_2|[M_1])\neq 0$. So we have $\{a_1,b_2\}$,
$\{a_1,d_3\}$, $\{b_2,d_3\}\in \cM$. 
Now we assume that 
\[
h=F_1\{c_2,a_1\}+F_2\{c_2,b_2\}+F_3\{c_2,d_3\}\in \cM.
\]
We have $W_{11}(h|[M])=0$ for $M=1_4$ and $M_1^2$, so the problem 
is the condition for $M_1$. For the sake of simplicity, we write 
$f|[M_1]=f^*$ for any $f \in A^{I}(\Gamma_0(4)^{\psi})$.
Then for $f=a_1$, $b_2$, $d_3$, we have 
\[
W_{11}(\{c_2^*,f^*\})=W(2c_2^*\p_{12}f\*)-kW(f^*)W(\p_{12}c_2^*)
=-kW(f^*)W(\p_{12}c_2^*),
\]
where $k$ is the weight of $f$. So the condition $W_{11}(h|[M_1])=0$
is equivalent to the condition
\[
W((F_1a_1+2F_2b_2+3d_3F_3)|[M_1])=0.
\]
But the kernel of $W(f^*)$ is spanned by $c_2$, so we have 
\[
F_1a_1+2F_2b_2+3d_3F_3=c_2G
\]
for some $G$. 
Since $a_1$, $b_2$, $c_2$, $d_3$ are algebraically independent, 
we may write uniquely as 
\[
F_i=H_i(a_1,b_2,d_3)+c_2G_i(a_1,b_2,c_2,d_3)
\]
for $i=1$, $2$, $3$, and substituting $c_2=0$, we  have 
\begin{equation}\label{shikiH}
a_1H_1+2b_2H_2+3d_3H_3=0.
\end{equation}
Here we put 
\[
H_i(a_1,b_2,d_3)=P_i(a_1,b_2)+d_3Q_i(a_1,b_2,d_3)
\]
for $i=1$, $2$. Then seeing \eqref{shikiH}, we have 
\begin{align}
& a_1P_1(a_1,b_2)+2b_2P_2(a_1,b_2)=0, \label{shikiP}
\\ & a_1Q_1(a_1,b_2,d_3)+2b_2Q_2(a_1,b_2,d_3)+3H_3=0.
\label{shikiQ}
\end{align}
Here \eqref{shikiP} means that we have 
$P_1(a_1,b_2)=-2b_2R(a_1,b_2)$, $P_2(a_1,b_2)=a_1R(a_1,b_2)$
for some polynomial $R$. 
So going back to the first and erasing $P_i$ and $H_3$ by \eqref{shikiQ} and \eqref{shikiP}, we have 
\begin{align*}
& F_1\{c_2,a_1\}+F_2\{c_2,b_2\}+F_3\{c_2,d_3\}
\\ = & G_1c_2\{c_2,a_1\}+G_2c_2\{c_2,b_2\}+G_3c_2\{c_2,d_3\}
 + R(-2b_2\{c_2,a_1\}+a_1\{c_2,b_2\})
\\ & + \frac{Q_1}{3}(3d_3\{c_2,a_1\}-a_1\{c_2,d_3\})
 + \frac{Q_2}{3}(3d_3\{c_2,b_2\}-2b_2\{c_2,d_3\}).
\end{align*}
Here we have 
\begin{align*}
-2b_2\{c_2,a_1\}+a_1\{c_2,b_2\} & = 2c_2\{a_1,b_2\}, \\
3d_3\{c_2,a_1\}-a_1\{c_2,d_3\} & = 2c_2\{d_3,a_1\}, \\
3d_3\{c_2,b_2\}=2b_2\{c_2,d_3\} & = 2c_2\{d_3,b_2\}.
\end{align*}
So we have 
\[
\cM=Span(\{a_1,b_2\}, \{a_1,d_3\}, \{b_2,d_3\},
c_2\{c_2,a_1\}, c_2\{c_2,b_2\}, c_2\{c_2,d_3\}).
\]
Here we note that $\{c_2^2,f\}=2c_2\{c_2,f\}$. 
By \eqref{rel1}, we see that 
the module 
\[
A^{I}(\Gamma_0(4)^{\psi})(a_1,0)
+A^I(\Gamma_0(4)^{\psi})(b_2,0)
+A^I(\Gamma_0(4)^{\psi})(c_2^2,0)
+A^I(\Gamma_0(4)^{\psi})(d_3,0)
\]
contains $(0,\cM)$.  So we completed a proof of the following theorem.
\begin{theorem}
For $k=1$, $2$, $3$, $4$, we denote by $\Phi_k$ 
the element of $J_{k,1}^{I}(\Gamma_0(4)^{\psi})$
such that $\xi(\Phi_1)=(a_1,0)$, $\xi(\Phi_2)=(b_2,0)$, 
$\xi(\Phi_3)=(d_3,0)$ and $\xi(\Phi_4)=(c_2^2,0)$. 
Then we have 
\[
J_{*,1}^{I}(\Gamma_0(4)^{\psi})
=A^I(\Gamma_0(4)^{\psi})\Phi_1\oplus 
A^I(\Gamma_0(4)^{\psi})\Phi_2\oplus 
A^I(\Gamma_0(4)^{\psi})\Phi_3\oplus 
A^I(\Gamma_0(4)^{\psi})\Phi_4.
\]
\end{theorem}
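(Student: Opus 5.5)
The proof has three parts: show that every Jacobi form has its $\xi$-image in the span of the four generators (this is what the preceding discussion establishes), check that the four generators really exist, and show they are free.

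\emph{Existence.} By Proposition \ref{propwittcondition} and Lemma \ref{gamma04orbit}, a pair $(F,0)$ with $F\in A_k^I(\Gamma_0(4)^{\psi})$ lies in $\xi(J_{k,1})$ exactly when
\[
W(\p_{12}(F|[M]))=0 \quad \text{for } M=1_4,\ M_1,\ M_1^2.
\]
For $F=a_1,b_2,d_3$ the transforms $F|[M]$ are sums of even powers of theta constants with characteristics. By the table their Witt images and those of their $\p_{12}$-derivatives vanish, since every theta constant appearing other than $\theta_{1111}$ is even in $\tau_{12}$, and $\theta_{1111}$ enters only through $\theta_{1111}^2$ with $W(\theta_{1111})=0$. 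For $c_2^2$ we have $c_2^2|[M_1]=(c_2|[M_1])^2$ with $W(c_2|[M_1])=0$, so $W(\p_{12}(c_2^2|[M_1]))=2W(c_2|[M_1])W(\p_{12}(c_2|[M_1]))=0$. At $M=1_4$ and $M_1^2$, the form $c_2^2$ is even in $\tau_{12}$. This produces $\Phi_1,\Phi_2,\Phi_3,\Phi_4$.

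\emph{Spanning.} Let $\Phi\in J_{k,1}^I(\Gamma_0(4)^{\psi})$. First we show $\xi_0(\Phi)\in\fM$. Write $\xi_0(\Phi)$ as a polynomial in $a_1,b_2,c_2,d_3$. Modulo $\fM$, the only monomial not already in $\fM$ is a constant multiple $\lambda c_2$, which lives in weight $2$. If $\lambda\neq 0$, subtracting elements of $\fM$ (which come from Jacobi forms by \eqref{moduleaction}) leaves a Jacobi form with $\xi_0=\lambda c_2$. Its $\xi_{2,2}$-component lies in $A_{2,2}=0$. The Witt condition at $M_1$ then forces $W(\p_{12}(c_2|[M_1]))=0$, contradicting the explicit nonzero value computed above. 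So $\lambda=0$.

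Subtracting an element of $\sum A^I\Phi_i$, we may then assume $\xi(\Phi)=(0,h)$ with $h\in\cM$. The computation preceding the theorem shows that $\cM$ is spanned over $A^I$ by $\{a_1,b_2\},\{a_1,d_3\},\{b_2,d_3\}$ and $c_2\{c_2,f\}=\tfrac12\{c_2^2,f\}$ for $f=a_1,b_2,d_3$. By \eqref{rel1}, each $(0,\{G,G'\})$ with $G,G'\in\{a_1,b_2,d_3,c_2^2\}$ is an $A^I$-combination of the $(G,0)$. Injectivity of $\xi$ then gives $\Phi\in\sum A^I\Phi_i$.

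\emph{Freeness.} Apply Lemma \ref{freebasis} with $G_1=c_2^2$ and $G_2,G_3,G_4=a_1,b_2,d_3$. We need $\{c_2^2,a_1\},\{c_2^2,b_2\},\{c_2^2,d_3\}$, that is $2c_2\{c_2,f\}$, to be linearly independent over $A^I$. Since $c_2$ is a nonzerodivisor, this reduces to the independence of $\{c_2,a_1\},\{c_2,b_2\},\{c_2,d_3\}$. That follows from \eqref{gamma00vectorI}: the fundamental relations there are only \eqref{rel2}, and each such relation involves a bracket not containing $c_2$.

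I expect this last step to be the main obstacle. If the citation to \cite{aokivector} is not considered sharp enough, I would verify it directly by showing that the $3\times3$ determinant of coefficient vectors of these three brackets has a nonvanishing Fourier coefficient, as was done in the level $3$ case. As a final check, $\dim A_k^I+\dim A_{k,2}^I$ should match the generating function $(t+t^2+t^3+t^4)/((1-t)(1-t^2)^2(1-t^3))$.
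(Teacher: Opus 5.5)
Your proposal is correct and follows essentially the same route as the paper: existence from the Witt conditions at $1_4$, $M_1$, $M_1^2$, exclusion of $(c_2,*)$ via $A_{2,2}=0$ and $W(\p_{12}(c_2|[M_1]))\neq 0$, reduction to $\xi(\Phi)=(0,h)$ with $h\in\cM$ and then \eqref{rel1}, and freeness via Lemma \ref{freebasis}. The one weak spot is your argument for the independence of $\{c_2,a_1\},\{c_2,b_2\},\{c_2,d_3\}$: "each relation involves a bracket not containing $c_2$" does not by itself exclude a combination of relations in which those brackets cancel, but this closes in one line, since column operations on the $4\times 4$ determinant give $\det(\{c_2,a_1\},\{c_2,b_2\},\{c_2,d_3\})=4c_2^2\{c_2,a_1,b_2,d_3\}$, a nonzero multiple of $c_2^2\chi_{11}$.
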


\subsection{Proof for type II}
First we show that there is some $h\in A_{11,2}(\Gamma_0(4)^{\psi})$ 
such that $(\chi_{11},h)\in {\rm Image}(\xi)$. 
To see this, we need $W(\p_{12}\chi_{11})$ for various orbits of 
diagonals.
For the sake of simplicity, we put $\X_{11}=-2^{18}\cdot 3\chi_{11}$.  
In the definition of $\chi_{11}$, we can replace $d_3$ by $f_3$ or $g_3$.
We have $W(\p_{12}f_3)=W(\p_{12}\p_{ii}f_3)=0$ for $i=1$. $2$, and 
since $g_3|[M_1^2]=f_3$, we have 
\begin{align*}
W(\p_{12}X_{11}) & =-W(\p_{12}^2f_3)W(\{a_2,b_2,c_2\}_{11})/2, \\
W(\p_{12}(X_{11}|[M_1^2]) & = -W(\p_{12}^2f_3)W(\{a_2|[M_1^2],b_2|[M_1^2],c_2|[M_1^2]\}_{11})/2.
\end{align*}
Now we show that $W(\p_{12}(\X_{11}|[M_1]))=0$. 
We have $W(c_2|[M_1])=W(\p_{12}(c_2|[M_1])=W(\p_{12}\p_{ii}(c_2|[M_1])=0$
for $i=1$, $2$. Since $c_2|[M_1]$ is an odd function with respect to $\tau_{12}$, we have 
$W(\p_{12}^2(c_2|[M_1]))=0$. If we write $A_i$ the derivative by $\p_{12}$ 
of $i$-th row of $\X_{11}$, then if $i\neq 3$, then 
the third row ant the $i$-the row of $W(A_i)$ are
both vectors $(0,0,*,0)$ for some $*$, so $W(A_i)=0$. 
The third row of $W(A_3)$ is the zero vector, so again $W(A_3)=0$
and $W(\p_{12}(\X_{11}|[M_1]))=0$. 
Now we would like to find a form $F \in A_{5}^I(\Gamma_0(4)^{\psi})$ 
such that $W(F|[M])=W(\p_{12}^2 (f_3|[M]))$ for $M=1_4$ and $M_1^2$. 
For that, we must calculate $W(\p_{12}^2f_3)$. 
Since $f_3g_3=-36864 K_6$, we have 
\begin{align*}
-36863W(\p_{12}^2K_6) & =
W(\p_{12}^2f_3)W(g_3)+2W(\p_{12}f_3\p_{12}g_3)+W(f_3)W(\p_{12}^2g_3)
\\ & =W(\p_{12}^2f_3)W(g_3).
\end{align*}
Here 
\begin{align*}
W(4096\p_{12}^2K_6)& =2W(\theta_{0100}\theta_{0110}\theta_{1000}\theta_{1001}\theta_{1100})^2)
W((\p_{12}\theta_{1111})^2)
\\ & = \frac{1}{8}A^4B^4C^{8}a^4b^4c^{8}.
\end{align*}
On the other hand, we have
\[
W(g_3)=6A^2a^2B^2b^2(A^2+B^2)(a^2+b^2), 
\]
so we have 
\[
W(\p_{12}^2f_3)=-\frac{3}{16}A^2B^2a^2b^2(A^2+B^2)(A^2-B^2)^2(a^2+b^2)(a^2-b^2)^2.
\]
On the other hand, if we put 
\[
F_0
=-\frac{1}{6}a_1^5+\frac{5}{6}a_1^3b_2-a_1b_2^2-\frac{1}{3}a_1^2d_3+8a_1c_2^2+\frac{2}{3}
b_2d_3,
\]
then we have 
\[
W(\p_{12}^2f_3)=-\frac{3}{16}W(F_0).
\]
It is obvious that $F_0|[M_1^2]=F_0$ and
 $W(\p_{12}^2(g_3|[M_1^2])=-\frac{3}{16}W(F_0|[M_1^2])$.
Now put 
\[
H=\frac{3(2\pi i)^2}{16\cdot 11}F_0\{a_1,b_2,c_2\}.
\]
Then we have $W((H|[M_1])_{11})=0$ since 
$W(c_2|[M_1])=W(\p_{ii}(c_2|[M_1])=0$ for $i=1$, $2$, and 
for any $M\in Sp(2,\Z)$, we have 
\[
W\biggl(\frac{(2\pi i)^2}{11}\p_{12}(\X_{11}|[M])+H_{11}^M\biggr)=0.
\]
So $(\X_{11},H)=\xi(\Phi_{11})$ for some $\Phi\in J_{11,1}^{II}(\Gamma_0(4)^{\psi})$.
So if $(f,h)\in \xi(J_{*,1}^{II}(\Gamma_0(4)^{\psi})$, then 
we can adjust $f=0$ by subtracting some element in 
$A^{I}(\Gamma_0(4)^{\psi})\Phi_{11}$.
So now we must determine elements in $J_{*,1}^{II}(\Gamma_0(4))$ 
whose images by $\xi$ are $(0,h)$.
This means that we must determine $h\in A_{*,2}^{II}(\Gamma_0(4)^{\psi})$
such that $W(h^{[M]}_{11})=0$ for all $M=1_4$, $M_1$, $M_1^2$. 
First we determine such $h$ for each $M$. 
We put 
\begin{align*}
V_0 & =\{h\in A_{*,2}^{II}(\Gamma_0(4)^{\psi}); W(h_{11})=0\},\\
V_1 & = \{h\in A_{*,2}^{II}(\Gamma_0(4)^{\psi}); W((h|[M_1])_{11})=0\}, \\
V_2 & = \{h \in A_{*,2}^{II}(\Gamma_0(4)^{\psi}); W((h|[M_1^2])_{11})=0\}.
\end{align*}
The module $A_{*,2}^{II}(\Gamma_0(4)^{\psi})$ is spanned over $A^I(\Gamma_0(4)^{\psi})$ 
by $\{a_1,b_2,c_2\}$, $\{a_1,b_2,d_3\}$, $\{a_1,c_2,d_3\}$, $\{b_2,c_2,d_3\}$. 
Since $d_3=f_3-2^{-1}a_1(a_1^2-3b_2-6c_2)$, we have 
\begin{align*}
\{a_1,b_2,d_3\} & =\{a_1,b_2,f_3\}+3\{a_1,b_2,c_2\},\\
\{a_1,c_2,d_3\} & = \{a_1,c_2,f_3\}-(3/2)\{a_1,b_2,c_2\}, \\
\{b_2,c_2,d_3\} & = \{b_2,c_2,f_3\}-(9/2)a_1^2\{a_1,b_2,c_2\}.
\end{align*}
So $A_{*,2}^{II}(\Gamma_0(4)^{\psi})$ is also spanned by 
$\{a_1,b_2,c_2\}$, $\{a_1,b_2,f_3\}$, $\{a_1,c_2,f_3\}$, $\{b_2,c_2,f_3\}$. 
Here we have $W(f_3)=W(\p_{ii}f_3)=0$ ($i=1,2$). We also have 
$W(\{a_1,b_2,c_2\}_{11})\neq 0$. So we have 
\[
V_0=Span(f_3\{a_1,b_2,c_2\},\{a_1,b_2,f_3\},\{a_1,c_2,f_3\},\{b_2,c_2,f_3\}),
\]
where Span means the span over $A^{I}(\Gamma_0(4)^{\psi})$.
Completely in the same way, we also have 
\begin{align*}
V_1 & = Span(c_2\{a_1,b_2,g_3\},\{a_1,b_2,c_2\},\{a_1.c_2,g_3\},\{b_2,c_2,g_3\}),  \\ 
V_2 & = Span(g_3\{a_1,b_2,c_2\},\{a_1,b_2,g_3\},\{a_1,c_2,g_3\},\{b_2,c_2,g_3\}).
\end{align*}
Here if we put $W_0=span(\{a_1,c_2,f_3\},\{b_2,c_2,f_3\})$, then we have 
$W_0\subset V_0\cap V_1$. Since
\begin{equation}\label{triplerelation}
3f_3\{a_1,b_2,c_2\}=-a_1\{b_2,c_2,f_3\}-b_2\{a_1,c_2,f_3\}-c_2\{a_1,b_2,f_3\}.
\end{equation}
We have 
\[
V_0=Span(W_0,\{a_1,b_2,f_3\}), \qquad V_1=Span(W_0,\{a_1,b_2,c_2\}).
\]
Now we consider a relation 
\[
F_1\{a_1,b_2,f_3\}=F_2\{a_1,b_2,c_2\}+F_3\{a_1,c_2,f_3\}+F_4\{b_2,c_2,f_3\}.
\]
Since the fundamental relation of these four generators are just 
\eqref{triplerelation} (See \cite{aokivector}), we see that  
$F_1$ is divisible by $c_2$ and $F_2$ is divisible by $f_3$. So 
we have 
\[
V_0\cap V_1
=Span(f_3\{a_1,b_2,c_2\},\{a_1,c_2,f_3\},\{b_2,c_2,f_3\}).
\]
In the same way, we have 
\[
V_1\cap V_2 = Span(g_3\{a_1,b_2,c_2\},\{a_1,c_2,g_3\},\{b_2,c_2,g_3\}).
\]
Since $g_3=f_3+6a_1c_2$, elements of 
$V_1\cap V_2$ is written as 
\begin{multline*}
F_1(f_3+6a_1c_2)\{a_1,b_2,c_2\}
+F_2\{a_1,c_2,g_3\}+F_3\{b_2,c_2,g_3\}
\\ = 
(F_1(f_3+6a_1c_2)+6c_2F_3)\{a_1,b_2,c_2\}+F_2\{a_1,c_2,f_3\}+F_3\{b_2,c_2,f_3\}
\end{multline*}
for some elements $F_i\in A^I(\Gamma_0(4)^{\psi})$. 
(We note that $\{a_1,c_2,f_3\}=\{a_1,c_2,g_3\}$.) 
If this is in $V_0\cap V_1$, then since $\{a_1,b_2,c_2\}$, $\{a_1,c_2,f_3\}$,
 $\{b_2,c_2,f_3\}$ are linearly independent over $A^{I}(\Gamma_0(4)^{\psi})$, 
we have $a_1F_1+F_3=f_3G$ for some $G \in A^{I}(\Gamma_0(4)^{\psi})$. 
In other words, elements of $V_0\cap V_1\cap V_2$ is given by 
\[
F_1(g_3\{a_1,b_2,c_2\}-a_1\{b_2,c_2,f_3\})+F_2\{a_1,c_2,g_3\}
+G_3f_3\{b_2,c_2,g_3\}.
\]
This means that 
\[
V_0\cap V_1 \cap V_2=Span(g_3\{a_1,b_2,c_2\}-a_1\{b_2,c_2,f_3\},
\{a_1,c_2,g_3\},f_3\{b_2,c_2,g_3\}). 
\]

So we prove 
\begin{theorem}
$J_{*,1}^{II}(\Gamma_0(4)^{\psi})$ 
is spaned by four free generators over $A^{I}(\Gamma_0(4)^{\psi})$
of weight $11$, $9$, $7$, $11$, respectively,  whose images by $\xi$ are 
\begin{align*}
& \bigl(X_{11},\frac{3(2\pi i)^2}{16\cdot 11}F_0\{a_1,b_2,c_2\}\bigr),
\quad (0,g_3\{a_1,b_2,c_2\}-a_1\{b_2,c_2,f_3\}),
\\ & (0,\{a_1,c_2,g_3\}), \quad \quad (0,f_3\{b_2,c_2,g_3\}).
\end{align*}
\end{theorem}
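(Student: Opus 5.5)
The plan follows the level $2$ and level $3$ type II arguments, using the computations already made for $\Gamma_0(4)^{\psi}$.

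\emph{Step 1: reduce to $\xi_0(\Phi)=0$.} We already have $\Phi_{11}$ with $\xi(\Phi_{11})=(\X_{11},\frac{3(2\pi i)^2}{16\cdot 11}F_0\{a_1,b_2,c_2\})$. Take any $\Phi\in J_{k,1}^{II}(\Gamma_0(4)^{\psi})$. Then $\xi_0(\Phi)\in A^{II}(\Gamma_0(4)^{\psi})=\chi_{11}A^I(\Gamma_0(4)^{\psi})$. Subtracting $f\Phi_{11}$ for a suitable $f\in A^I$ gives $\xi_0(\Phi)=0$, by \eqref{moduleaction}.

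\emph{Step 2: describe the admissible $h$.} Now $\xi(\Phi)=(0,h)$ with $h\in A_{*,2}^{II}$. By Proposition \ref{propwittcondition} and Lemma \ref{gamma04orbit}, $(0,h)$ is in the image exactly when $h\in V_0\cap V_1\cap V_2$. For $(0,h)$ we have $f(0,h)=(0,fh)$ by \eqref{rel5}. Hence the elements with vanishing first component form the $A^I$-module generated by the three listed spanning vectors of $V_0\cap V_1\cap V_2$.

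Their weights are as follows. The forms $\{a_1,c_2,g_3\}$, $g_3\{a_1,b_2,c_2\}-a_1\{b_2,c_2,f_3\}$ and $f_3\{b_2,c_2,g_3\}$ have weights $7$, $9$ and $11$. This uses the rule that $\{f_1,f_2,f_3\}$ has weight $k_1+k_2+k_3+1$, together with the definitions of $\{\ ,\ ,\ \}$ and of $f_3,g_3$.

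Before relying on this, I would recheck the computation of $V_0\cap V_1\cap V_2$ given just before the statement. In particular I would verify that the condition coming from $V_2$ was intersected correctly. This is done by rewriting all elements in the common basis $\{a_1,b_2,c_2\},\{a_1,b_2,f_3\},\{a_1,c_2,f_3\},\{b_2,c_2,f_3\}$, whose only relation is \eqref{triplerelation}, and using $g_3=f_3+6a_1c_2$. This yields the four generators with weights $7,9,11,11$.

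\emph{Step 3: freeness.} The three forms in $V_0\cap V_1\cap V_2$ are independent over $A^I$. To see this, express them in the basis $\{a_1,b_2,c_2\},\{a_1,c_2,f_3\},\{b_2,c_2,f_3\}$, which is linearly independent by \cite{aokivector}. The transition matrix is
\[
\begin{pmatrix} g_3 & 0 & -a_1\\ 0&1&0\\ 6c_2f_3 & 0 & f_3\end{pmatrix},
\]
up to ordering. Its determinant is $f_3(g_3+6a_1c_2)\neq 0$, so the three vectors are free.

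Adding $\Phi_{11}$ keeps the family free. Its $\xi_0$ component $\chi_{11}\neq 0$ is not a zero divisor in $A$, while the other three generators have $\xi_0=0$. So any relation must have zero coefficient on $\Phi_{11}$.

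Alternatively, one can compare with the dimension generating function. Since $\xi$ is injective, freeness gives the stated series $(t^7+t^9+2t^{11})/((1-t)(1-t^2)^2(1-t^3))$.

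\emph{Main obstacle.} The delicate step is the exact determination of $V_0\cap V_1\cap V_2$. Three points need care there:
\begin{itemize}
\item one needs the precise kernels of $W$ after each transformation $M_1$, $M_1^2$ (generated by $f_3$, $c_2$, $g_3$ respectively);
\item one needs the non-vanishing of $W$ on the relevant $(\cdot)_{11}$ components;
\item the syzygy \eqref{triplerelation} must be the only relation among the generators.
\end{itemize}
I would double-check these using Fourier expansions, as was done in the level $3$ case.
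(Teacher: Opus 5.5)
Your route is the paper's: remove $\xi_0$ using $\Phi_{11}$, then compute $V_0\cap V_1\cap V_2$. The explicit independence check in Step 3 is a useful addition. But there is a genuine gap exactly at the step you flagged and then skipped. You assert that redoing the intersection ``yields'' the listed generators, and you feed $g_3\{a_1,b_2,c_2\}-a_1\{b_2,c_2,f_3\}$ into Step 3 as the row $(g_3,0,-a_1)$. That form is not in the image of $\xi$.

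Write $T=\{a_1,b_2,c_2\}$, $P=\{a_1,b_2,f_3\}$, $Q=\{a_1,c_2,f_3\}$, $R=\{b_2,c_2,f_3\}$. Expanding a $4\times 4$ determinant with the row $(k_jf_j)$ repeated gives the syzygy $a_1R-2b_2Q+2c_2P-3f_3T=0$. The printed \eqref{triplerelation} has wrong coefficients, and with it the $V_2$ condition below would not be well defined. Use $\{a_1,b_2,g_3\}=P+6a_1T$, $\{a_1,c_2,g_3\}=Q$ and $\{b_2,c_2,g_3\}=R+6c_2T$. Then $h=xT+yP+zQ+wR$ lies in $V_0$ iff $f_3\mid x$, in $V_1$ iff $c_2\mid y$, and in $V_2$ iff $g_3\mid x-6a_1y-6c_2w$. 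All three conditions are invariant under the syzygy.

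The listed form has $x=g_3$, so $W(h_{11})=W(g_3)W(T_{11})$ with $W(g_3)=6A^2B^2a^2b^2(A^2+B^2)(a^2+b^2)\neq 0$. It therefore violates the Witt condition at $1_4$, and also at $M_1^2$. The slip is in the paper's final substitution. Inserting $F_3=-a_1F_1+f_3G$ into $F_1g_3T+F_2\{a_1,c_2,g_3\}+F_3\{b_2,c_2,g_3\}$ produces $F_1\bigl(g_3T-a_1\{b_2,c_2,g_3\}\bigr)$, and $g_3T-a_1\{b_2,c_2,g_3\}=f_3T-a_1R$. The last bracket was miscopied with $f_3$ in place of $g_3$.

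Now solve the conditions. They give $x=f_3x_1$ and $y=c_2y_1$, and since $f_3\equiv -6a_1c_2 \bmod g_3$, also $w=-a_1(x_1+y_1)+g_3w_1$. Using $c_2P-a_1R=\frac32(f_3T-a_1R)+b_2Q$ and $g_3R=f_3\{b_2,c_2,g_3\}-6c_2(f_3T-a_1R)$, $V_0\cap V_1\cap V_2$ is generated by $f_3\{a_1,b_2,c_2\}-a_1\{b_2,c_2,f_3\}$, $\{a_1,c_2,g_3\}$ and $f_3\{b_2,c_2,g_3\}$, of weights $9,7,11$.

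With this replacement your Step 3 goes through: the rows become $(f_3,0,-a_1)$, $(0,1,0)$, $(6c_2f_3,0,f_3)$, with determinant $f_3g_3\neq 0$. So the weights $11,9,7,11$ and the generating function are right. What you can actually prove is the corrected statement, not the one with $\{b_2,c_2,f_3\}$ in the second generator.

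In the same spirit, re-derive rather than quote the second component of $\xi(\Phi_{11})$. The paper's own formulas $W(\partial_{12}X_{11})=-\frac12W(\partial_{12}^2f_3)W(T_{11})$ and $W(\partial_{12}^2f_3)=-\frac3{16}W(F_0)$, inserted into \eqref{wittcondition}, force $H=-\frac{3(2\pi i)^2}{32\cdot 11}F_0T$ rather than the printed constant. This does not affect weights or freeness.
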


\section{Proof for $\Gamma_0^0(2)^{\psi}$}\label{prooflevel2}
\begin{lemma}\label{gamma002orbit}
We have 
\[
\Gamma_0^0(2)\backslash Sp(2,\Z)/\Gamma_0=
\{1_4, M_1,M_2,M_3\}
\]
where we put 
\[
M_1=\begin{pmatrix} 1_2 & 0 \\ S_0 & 1_2 \end{pmatrix},
\quad 
M_2=
\begin{pmatrix} 1_2 & S_0  \\
0 & 1_2 \end{pmatrix},
\quad 
M_3=
\begin{pmatrix} 1_2 & -1_2-S_0  \\
1_2 & -S_0 
\end{pmatrix}, \quad S_0=\begin{pmatrix} 0 & 1 \\ 1 & 0 
\end{pmatrix}.
\]
\end{lemma}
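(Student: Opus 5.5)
We reduce to the already known decomposition for $\Gamma_0(4)$ by conjugation with $\gamma_2=\begin{pmatrix} 1_2 & 0\\ 0 & 2\cdot 1_2\end{pmatrix}$. Since $\gamma_2^{-1}\Gamma_0(4)\gamma_2=\Gamma_0^0(2)$, it is tempting to transport Lemma \ref{gamma04orbit}. However, $\gamma_2\notin Sp(2,\Z)$ and it does not normalize $\Gamma_0$, so this would only be a heuristic. Instead we argue directly, mimicking the proofs of Lemma \ref{level2rep} and Lemma \ref{gamma04orbit}.

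First, $\Gamma_0^0(2)\subset\Gamma_0(2)$ has index $[\Gamma_0(2):\Gamma_0^0(2)]=\#\{S=\,^tS \bmod 2\}=8$. Coset representatives of $\Gamma_0^0(2)\backslash\Gamma_0(2)$ are the translations $t(S)=\begin{pmatrix}1_2&S\\0&1_2\end{pmatrix}$ with $S$ symmetric modulo $2$. To check this, take $g=\begin{pmatrix}A&B\\C&D\end{pmatrix}\in\Gamma_0(2)$. Then $gt(S)^{-1}$ has upper right block $B-AS$. Because $A$ is invertible mod $2$ and $A^{-1}B$ is symmetric mod $2$ (using $A\,^tD\equiv 1_2$), we can choose $S\equiv A^{-1}B$. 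Combined with Lemma \ref{level2rep}, every double coset of $\Gamma_0^0(2)\backslash Sp(2,\Z)/\Gamma_0$ has a representative of the form $t(S)$ or $t(S)M_1$, with $S$ among the $8$ symmetric matrices mod $2$. Elements of the first family are never equivalent to elements of the second, since they are already inequivalent under $\Gamma_0(2)$.

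Next we sort each family under right multiplication by $\Gamma_0$, which contains $\iota(P_1,P_2)$ with arbitrary $P_i\in SL_2(\Z)$.

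\emph{First family.} The diagonal entries of $S$ can be removed by $\iota\!\left(\begin{pmatrix}1&x\\0&1\end{pmatrix},\begin{pmatrix}1&z\\0&1\end{pmatrix}\right)$. So only the off-diagonal entry $y\bmod 2$ matters, leaving $1_4$ and $t(S_0)=M_2$. To show these two are inequivalent, we follow the proof of Lemma \ref{gamma04orbit}: suppose $t(S_1)\,g\,t(-S_2)\in\Gamma_0^0(2)$ with $g=\iota(P_1,P_2)$. The conditions on the $B$ and $C$ blocks modulo $2$ force $q_i$ and $r_i$ even, hence $p_i,s_i$ odd. The $(1,4)$ and $(2,3)$ entries then force $y_1\equiv y_2\bmod 2$.

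\emph{Second family.} Write $t(S)M_1=\begin{pmatrix}1_2+SS_0&S\\S_0&1_2\end{pmatrix}$. Again we normalize $S$ using $\Gamma_0$ conjugated through $M_1$, exactly as was done for $u(2S)M_1$ in Lemma \ref{gamma04orbit}, i.e.\ by examining the block $M_1\iota(P_1,P_2)M_1^{-1}$ with suitable parities of $p_i,q_i,r_i,s_i$. We expect exactly two classes to survive. One is $M_1$ itself ($S=0$). For the other, we verify directly that $M_3=\begin{pmatrix}1_2&-1_2-S_0\\1_2&-S_0\end{pmatrix}$ is equivalent to $t(-1_2-S_0)M_1$ times an element of $\Gamma_0$. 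This reduces to checking that $(t(-1_2-S_0)M_1)^{-1}M_3$ lies in $\iota(SL_2(\Z)\times SL_2(\Z))$ after left multiplication by an element of $\Gamma_0^0(2)$, which is a direct matrix computation.

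The main obstacle is the bookkeeping in the second family. There are eight candidates $S$, and the action of $M_1\Gamma_0M_1^{-1}$ does not reduce the parity of $S$ in an obvious way. Here the most likely invariant distinguishing $M_1$ from $M_3$ is $\mathrm{tr}(S)\bmod 2$, or equivalently whether the $B$-block has an odd diagonal. We would test this first, by computing how each generator $\iota\!\left(\begin{pmatrix}1&1\\0&1\end{pmatrix},1_2\right)$, $\iota(J_1,1_2)$ (and the same in the second factor) changes $S$ modulo $2$ and modulo the $\Gamma_0^0(2)$-ambiguity. As a cross-check, the index $[Sp(2,\Z):\Gamma_0^0(2)]=15\cdot 8$ should be matched by the sum of the orbit sizes of the four representatives acting on $\Gamma_0^0(2)\backslash Sp(2,\Z)$ by right multiplication by $\Gamma_0$.
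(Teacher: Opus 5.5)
Your reduction to the two families $t(S)$ and $t(S)M_1$ is the same as the paper's, and your treatment of the first family is correct. The genuine gap is the second family, which is all of the content of the lemma beyond Lemma \ref{level2rep}. You do not show that the eight matrices $t(S)M_1$ fall into at most two double cosets, nor that $M_1$ and $M_3$ are inequivalent; both are only stated as expectations.

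The invariant you propose to test also fails. $\mathrm{tr}(S)\bmod 2$ does not separate the classes: $t(\mathrm{diag}(1,0))M_1$ lies in the double coset of $M_1=t(0)M_1$. The correct dividing line is that $t(S)M_1$ is equivalent to $M_3$ exactly when both diagonal entries of $S$ are odd ($S\equiv 1_2$ or $1_2+S_0 \bmod 2$). For the other six $S$ it is equivalent to $M_1$.

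The check you describe for $M_3\sim T:=t(-1_2-S_0)M_1$ is misphrased. We have $T^{-1}M_3=\begin{pmatrix}2\cdot 1_2+S_0 & -2(1_2+S_0)\\ -2S_0 & 2\cdot 1_2+S_0\end{pmatrix}$, which already lies in $\Gamma_0^0(2)$, so left-multiplying it into $\Gamma_0$ is trivial and proves nothing. What you need is $\gamma\in\Gamma_0^0(2)$ with $T^{-1}\gamma M_3\in\Gamma_0$. Such a $\gamma$ never lies in $\Gamma(2)$; one can take $\gamma\equiv\mathrm{diag}(S_0,S_0)\bmod 2$.

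The cleanest way to close the gap is to reduce mod $2$.
\begin{itemize}
\item $\Gamma_0^0(2)$ contains the kernel $\Gamma(2)$ of reduction. $\Gamma_0^0(2)$ and $\Gamma_0$ surject onto $\bar H=\{\mathrm{diag}(A,{}^tA^{-1})\}\cong GL_2(\mathbb{F}_2)$ and $\bar K=SL_2(\mathbb{F}_2)\times SL_2(\mathbb{F}_2)$. Hence the double cosets in question are those of $\bar H\backslash Sp(4,\mathbb{F}_2)/\bar K$.
\item $\bar K$ is the stabilizer of the nondegenerate plane $P_0=\langle e_1,e_3\rangle$. So $g\mapsto P(g)=gP_0$ (the span of columns $1$ and $3$ of $g$ mod $2$) identifies the double cosets with $\bar H$-orbits on the $20$ nondegenerate planes.
\item $\bar H$ preserves $L_1=\langle e_1,e_2\rangle$ and $L_2=\langle e_3,e_4\rangle$. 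The pair $(\dim P\cap L_1,\dim P\cap L_2)$ takes the values $(1,1),(1,0),(0,1),(0,0)$ on $6,6,6,2$ planes respectively. Each set is a single $\bar H$-orbit; for example $\mathrm{diag}(S_0,S_0)$ swaps the two planes of type $(0,0)$.
\item These types are realized by $1_4$, $M_2$, $M_1$, $M_3$ respectively. For instance $P(M_1)=\langle e_1+e_4,e_3\rangle$ and $P(M_3)=\langle e_1+e_3,e_1+e_2+e_4\rangle$.
\end{itemize}
This gives exhaustiveness and pairwise inequivalence at once, and it also yields your index check $120=36+36+36+12$.

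For comparison, the paper's own proof is only a sketch with the computation omitted. The second-family representatives it names, $v(S)M_1$ with $S\in\{0,S_0\}$, are not correct: $P(v(S_0)M_1)=\langle e_4,e_2+e_3\rangle$ has the same type as $P(M_1)$. Your choice $t(-1_2-S_0)M_1$ is the right one, but the classification of the second family still has to be proved.
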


\begin{proof}
For a $2\times 2$ symmetric matrix $S$, we put 
\[
v(S)=\begin{pmatrix} 1_2 & S \\ 0_2 & 1_2 \end{pmatrix}.
\]
Then we have 
\[
\Gamma_0^0(2)\backslash \Gamma_0(2)
=
\{v(S); S=\,^{t}S\in M_2(\Z), S \bmod 2\}.
\]
So the representatives of the double coset are
taken among $v(S)$ and $v(S)M_1$. 
Here we can show that representatives are given by 
$v(S) $ and $v(S)M_1$ for $S=0$ and 
$S=S_0$
by the same sort of calculation as in the proof of 
Lemma \ref{gamma04orbit}. We omit the details here.
\end{proof}

By Proposition \ref{transformation}, we can show the 
following relations.
\[
\begin{array}{|l|c|} \hline
\text{  \hspace{4ex}Transformations} & \text{Witt images} \\ \hline
a_1 = \theta_{0000}^2  & A^2 a^2 \\
b_2 = \theta_{0000}^4+\theta_{0001}^4+\theta_{0010}^4+\theta_{0011}^4 & (A^4+B^4)(a^4+b^4) \\
c_2 = \theta_{0000}^4+\theta_{0100}^4+\theta_{1000}^4+\theta_{1100}^4 & (A^4+C^4)(a^4+c^4) \\
d_3 = (\theta_{0001}\theta_{0010}\theta_{0011})^2 & A^2B^4a^2b^4 
\\ \hline 
a_1|[M_1] = a_1 & A^2a^2 \\
b_2|[M_1]  =  \theta_{0000}^4+\theta_{1001}^4+\theta_{0110}^4+\theta_{1111}^4 
& A^4a^4+B^4c^4+C^4b^4 \\
c_2|[M_1]  =  \theta_{0000}^4+\theta_{0100}^4 + \theta_{1000}^4+
\theta_{1100}^4 & (A^4+C^4)(a^4+c^4) \\
d_3|[M_1]  =   -(\theta_{0110}\theta_{1001}\theta_{1111})^2 & 0 \\ \hline 
a_1|[M_2]  =  a_1  & A^2a^2 \\
b_2|[M_2]  =  \theta_{0000}^4+\theta_{0001}^4+\theta_{0010}^4+\theta_{0011}^4 & (A^4+B^4)(a^4+b^4) \\
c_2|[M_2] = \theta_{0000}^4+\theta_{0110}^4+\theta_{1001}^4+\theta_{1111}^4 & A^4a^4+B^4c^4+C^4b^4 \\
d_3|[M_2]  = d_3 &  A^2B^4a^2b^4
\\ \hline 
a_1|[M_3]  =  \theta_{1111}^2 & 0 \\
b_2|[M_3]  =  \theta_{1111}^4+\theta_{1001}^4+\theta_{0110}^4
+\theta_{0000}^4 & A^4a^4+B^4c^4+C^4b^4 \\
c_2|[M_3]  =  \theta_{1111}^4-\theta_{1000}^4-\theta_{0100}^4+\theta_{0011}^4 & -A^4c^4-C^4a^4+B^4b^4 \\
d_3|[M_3]  =   -(\theta_{0000}\theta_{0110}\theta_{1001})^2 &  -A^2B^2C^2a^2b^2c^2 \\ \hline 
\end{array} 
\]
Now we explain that each orbit is the zero of some 
modular form.
We put 
\begin{align*}
f_3 & =(\theta_{0110}\theta_{1001}\theta_{1111})^2,\\
g_3 & = (\theta_{0100}\theta_{1000}\theta_{1100})^2.
\end{align*}
By the theta transformation formula in \cite{igusagraded1} p. 227, 
we can show that 
$f_3$, $g_3 \in A_3(\Gamma_0^0(2), \psi)$.
By definition, we have 
\[
K_6=f_{3}g_{3}/4096,
\]
where $K_6$ is the generator of $A(\Gamma_0(2))$ given 
in section \ref{prooflevel1and2}.
We can show that any element $F \in \C[a_1,b_2,c_2,d_3]$
such that $W(F)=0$ is a constant times of 
$(6a_1^3-2a_1b_2-a_1c_2+3d_3)$, we see
\[
f_3  =-d_3-2a_1^3+\frac{2}{3}a_1b_2+\frac{1}{3}a_1c_2.
\]
by comparing the Fourier coefficients. By comparing $W(g_3)$ and
$W(a_1b_2-a_1c_2-3d_3)$, we see that 
$g_3=-(a_1b_2-a_1c_2-3d_3)/3+F_0$, 
where $W(F_0)=0$. Since $F_0$ is a constant multiple of $f_3$, comparing Fourier coefficients, we see that $F_0=0$. 
So we have 
\begin{equation}
g_3  =d_3-\frac{1}{3}a_1b_2+\frac{1}{3}a_1c_2.
\end{equation}
By definition of $Y_4$ in section \ref{prooflevel1and2}, we also have
\[
Y_4=a_1d_3.
\]
So we have 
\[
\chi_{10}=\frac{1}{4096}a_1d_3f_3g_3.
\]
The diagonal orbits of $\chi_{10}=0$ is given by 
the union of 
$D_0=\{f_3=0\}$, $D_1=\{g_3=0\}$, $D_3=\{a_1=0\}$, and $D_4=\{d_3=0\}$. 

\subsection{Proof for type I}
\begin{theorem}
$J_{*,1}^{I}(\Gamma_0^0(2)^{\psi})$ is spanned by
a free basis $(a_1,0)$, $(b_2,0)$, $(c_2,0)$, $(d_3,0)$ 
 over $A^I(\Gamma_0^0(2)^{\psi})=\C[a_1,b_2,c_2,d_3]$. 
\end{theorem}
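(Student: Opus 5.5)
The plan is to follow the type~I argument for $\Gamma_0(2)$ and $\Gamma_0(4)^{\psi}$. It should be easier here, because the obstruction that forced us to use $c_2^2$ instead of $c_2$ for $\Gamma_0(4)^{\psi}$ does not arise. By Proposition \ref{propwittcondition} and Lemma \ref{gamma002orbit}, a pair $(f_0,h)\in A_k^I\times A_{k,2}^I$ lies in $\xi(J_{k,1}^I(\Gamma_0^0(2)^{\psi}))$ if and only if the Witt condition \eqref{wittcondition} holds for $g=1_4$, $M_1$, $M_2$, $M_3$.

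\emph{Step 1: $(f,0)$ is in the image for $f=a_1,b_2,c_2,d_3$.} For this we need $W(\p_{12}(f|[M]))=0$ for the four representatives $M$. From the transformation table, every $f|[M]$ is a linear combination of squares of theta constants, or of products of squares. For instance $d_3|[M_1]=-(\theta_{0110}\theta_{1001}\theta_{1111})^2$ and $a_1|[M_3]=\theta_{1111}^2$.

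For any characteristic $m$, the substitution $\tau\mapsto I\tau$ sends $\theta_m(\tau)$ to $\pm\theta_m(\tau)$. This follows from the definition by changing $p_2\mapsto -p_2$ in the summation, up to a sign coming from $m''$. Hence each $\theta_m^2$ is an even function of $\tau_{12}$, so each $f|[M]$ is even in $\tau_{12}$ and $W(\p_{12}(f|[M]))=0$.

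Unlike $c_2=\theta_{0000}\theta_{0001}\theta_{0010}\theta_{0011}$ for $\Gamma_0(4)^{\psi}$, no product of an odd number of distinct thetas appears. This is the step I expect to need the most care: one must check each entry of the table, including the phases from Proposition \ref{transformation}, which do not affect evenness. Granting it, we get $\Phi_1,\Phi_2,\Phi_2',\Phi_3$ with $\xi$-images $(a_1,0)$, $(b_2,0)$, $(c_2,0)$, $(d_3,0)$.

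\emph{Step 2: spanning.} By \eqref{moduleaction}, $\xi_0$ of the $A^I$-span of these four forms is the whole augmentation ideal of $A^I=\C[a_1,b_2,c_2,d_3]$. So after subtracting an element of this span from any $\Phi\in J_{k,1}^I$ (with $k\geq1$), we may assume $\xi(\Phi)=(0,h)$ with $h\in A^I_{k,2}$.

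By \eqref{gamma00vectorI}, $h$ is an $A^I$-combination of the brackets $\{f,g\}$ with $f,g\in\{a_1,b_2,c_2,d_3\}$. For each such bracket, $W((\{f,g\}|[M])_{11})=W\bigl(k_f(f|[M])\,\p_{12}(g|[M])-k_g(g|[M])\,\p_{12}(f|[M])\bigr)=0$ by Step 1. Moreover, by \eqref{rel1}, $(0,\{f,g\})$ is a constant times $f(g,0)-g(f,0)$, which already lies in the span of our four Jacobi forms. Hence every $(0,h)$ with $h\in A^I_{*,2}$ is in that span, and no further analysis of a submodule $\cM$ is needed.

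\emph{Step 3: freeness.} Apply Lemma \ref{freebasis} with $G_1=a_1$ and $G_2,G_3,G_4=b_2,c_2,d_3$. The brackets $\{a_1,b_2\}$, $\{a_1,c_2\}$, $\{a_1,d_3\}$ are free over $A^I$ by \cite{aokivector}: the only relations among the six brackets are \eqref{rel2}, and these involve the brackets not containing $a_1$. As a check, injectivity of $\xi$ gives $\dim J_{k,1}^I=\dim A_k^I+\dim A_{k,2}^I$ for $k\ge1$. Combined with the known generating function of $A^I_{*,2}$, this should reproduce $(t+2t^2+t^3)/((1-t)(1-t^2)^2(1-t^3))$, as stated in the Corollary.
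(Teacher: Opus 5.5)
Your proposal is correct and is essentially the paper's proof: evenness of every $\theta_m^2$ in $\tau_{12}$ gives the Witt condition for $(a_1,0)$, $(b_2,0)$, $(c_2,0)$, $(d_3,0)$ at all four double coset representatives; \eqref{moduleaction}, \eqref{rel1} and \eqref{gamma00vectorI} then show that $\xi$ maps onto $A_k^I\times A_{k,2}^I$ for $k\geq 1$; and Lemma \ref{freebasis} gives freeness. You also fill in details the paper leaves implicit: the parity of $\theta_m$ under $\tau_{12}\mapsto-\tau_{12}$ (where the substitution should be $p_2\mapsto -p_2-m'_2$ when $m'_2$ is odd), and the explicit choice $G_1=a_1$ in Lemma \ref{freebasis}, which is legitimate because the only syzygies among the brackets are those of \eqref{rel2}.
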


\begin{proof}
For any characteristics $m$, we have 
$W(\p_{12}\theta_{m}^2)=0$, so 
for any $f=a_1$, $b_2$, $c_2$, $d_3$, and 
any $M=1_4$, $M_1$, $M_2$, $M_3$, we have 
\[
W(\p_{12}(f|[M])=0.
\]
This means that $(a_1,0)$, $(b_2,0)$, $(c_2,0)$, $(d_3,0)
\in {\rm Image}(\xi)$. 
By Lemma \ref{freebasis}, these are linearly independent over 
$A^I(\Gamma_0^0(2)^{\psi})$. For any $\xi(\Phi)=(f,h)$ for 
$\Phi\in J_{*,1}^{I}(\Gamma_0^0(2)^{\psi})$, subtracting a linear
combination of $(a_1,0)$, $(b_2,0)$, $(c_2,0)$, $(d_3,0)$, 
we can make $f$ to be $0$ and $(f,h)=(0,h)$. By 
\eqref{rel1}, we can take $h$ to be any of 
$\{a_1,b_2\}$, $\{a_1,c_2\}$, $\{a_1,d_3\}$, $\{b_2,c_2\}$, 
$\{b_2,d_3\}$, $\{c_2,d_3\}$. These span $A_{*,2}^{I}(\Gamma_0^0(2)^{\psi})$ over $A^I(\Gamma_0^0(2)^{\psi})$, so 
$h$ can be any element in $A_{*,2}^I(\Gamma_0^0(2))$. 
So the above four elements span $J_{*,1}(\Gamma_0^0(2)^{\psi})$.
\end{proof}

\subsection{Proof for type II}
First we show that $(\chi_{11},h)\in {\rm Image}(\xi)$ for some 
$h\in A_{11,2}^{I}(\Gamma_0^0(2)^{\psi})$. 
As before we replace $\chi_{11}$ by $\X_{11}$. 
Similarly as before, we can show that 
\begin{align*}
2W(\p_{12}X_{11}) & = W(\p_{12}^2f_3)W(\{a_1,b_2,c_2\}_{11}),\\
2W(\p_{12}(X_{11}|[M_1])) & = -W(\p_{12}^2(d_3|[M_1])W(\{a_1|[M_1],b_2|[M_1],c_2|[M_1]\}_{11}), \\
2W(\p_{12}(X_{11}|[M_2])) & = -W(\p_{12}^2(g_3|[M_2])W(\{a_1|[M_2],b_2|[M_2],c_2|[M_2])\}_{11}), \\
2W(\p_{12}(X_{11}|[M_2])) & = W(\p_{12}^2(a_1|[M_3])W(\{b_2|[M_3],c_2|[M_3],d_3|[M_3]\}_{11}).
\end{align*}
On the other hand, we have 
\begin{align*}
W(\p_{12}^2f_3) & =\frac{1}{8}A^2B^4C^4a^2b^4c^4, \\
W(\p_{12}^2(d_3|[M_1]) & = -\frac{1}{8}A^2B^4C^4a^2b^4c^4, \\
W(\p_{12}^2(g_3|[M_2]) & = -\frac{1}{8}A^2B^4C^4a^2b^4c^4, \\
W(\p_{12}^2(a_1|[M_3]) & = \frac{1}{8}A^2B^2C^2a^2b^2c^2. 
\end{align*}
Now for the sake of simplicity, for $H\in A_{*,2}^{II}(\Gamma_0^0(2))$ and $M\in Sp(2,\Z)$, we write $W_{11}(H|[M])=W((H|[M])_{11})$. 
Here we want to find $H\in A_{k,2}^{II}(\Gamma_0^0(2))$ 
such that 
\begin{align}
W_{11}(H) & = -A^2B^4C^4a^2b^4c^4W(\{a_1,b_2,c_2\}_{11}), \label{shiki1}\\
W_{11}(H|[M_1]) & = -A^2B^4C^4a^2b^4c^4W((\{a_1,b_2,c_2\}|[M_1])_{11}), \label{shiki2}\\
W_{11}(H|[M_2]) & = -A^2B^4C^4a^2b^4c^4W((\{a_1,b_2,c_2\}|[M_2])_{11}), \label{shiki3}\\
W_{11}(H|[M_3]) & = -A^2B^2C^2a^2b^2c^2 W((\{b_2,c_2,d_3\}|[M_3])_{11}). \label{shiki4}
\end{align}
We may write 
\begin{equation}\label{H}
H=F_1\{a_1,b_2,c_2\}+F_2\{a_1,b_2,d_3\}+F_3\{a_1,c_2,d_3\}+F_4\{b_2,c_2,d_3\}
\end{equation}
for some $F_i\in A_{*}^{I}(\Gamma_0^0(2)^{\psi})$. 
Since we have 
\begin{align*}
\{a_1,b_2,d_3\} & =-\{a_1,b_2,f_3\}+\frac{1}{3}a_1\{a_1,b_2,c_2\}, \\
\{a_1,c_2,d_3\} & =-\{a_1,c_2,f_3\}-\frac{2}{3}a_1\{a_1,b_2,c_2\}, \\
\{b_2,c_2,d_3\} & = -\{b_2,c_2,f_3\}-6a_1^2\{a_1,b_2,c_2\}+\frac{2}{3}b_2\{a_1,b_2,c_2\}
+\frac{1}{3}c_2\{a_1,b_2,c_2\},
\end{align*}
and $W(\{a_1,b_2,f_3\}_{11})=W(\{a_1,c_2,f_3\}_{11})=W(\{b_2,c_2,f_3\}_{11})=0$, 
a necessary condition so that 
\eqref{shiki1} holds can 
be written as 
\begin{equation}\label{eq1}
F_1+\frac{1}{3}a_1F_2-\frac{2}{3}a_1F_3+\frac{1}{3}(-18a_1^2+2b_2+c_2)F_4
\sim -A^2B^4C^4a^2b^4c^4,
\end{equation}
where $\sim$ means images of the both sides under $W$ are the same.
We also have 
\begin{align*}
\{a_1,b_2,d_3\} & =\{a_1,b_2,g_3\}-\frac{1}{3}a_1\{a_1,b_2,c_2\}, \\
\{a_1,c_2,d_3\} & =\{a_1,c_2,g_3\}-\frac{1}{3}a_1\{a_1,b_2,c_2\}, \\
\{b_2,c_2,d_3\} & = \{b_2,c_2,g_3\}+\frac{1}{3}b_2\{a_1,b_2,c_2\}-\frac{1}{3}c_2\{a_1,b_2,c_2\},
\end{align*}
and $W((\{*,*,g_3\}|[M_2])_{11})=0$, so the condition for 
\eqref{shiki3} is given by 
\begin{equation}\label{eq2}
F_1-\frac{1}{3}a_1F_2-\frac{1}{3}a_1F_3 +\frac{1}{3}(b_2-c_2)F_4
\underset{M_2}{\sim} -A^2B^4C^4a^2b^4c^4,
\end{equation}
where $\underset{M}{\sim}$ means that the images of $W(*|[M])$ of both sides coincide.
Comparing both sides for  \eqref{shiki2} and \eqref{shiki4},  we have 
\[
F_1\underset{M_1}{\sim} -A^2B^4C^4a^2b^4c^4,
\]
and 
\[
F_4\underset{M_3}{\sim} -A^2B^2C^2a^2b^2c^2.
\]
Since we have $W(d_3|[M_3])=-A^2B^2C^2a^2b^2c^2$, we have 
$F_4-d_3\underset{M_3}{\sim} 0$, This means that 
$F_4-d_3=a_1G$ for some $G\in A_2^{I}(\Gamma_0^0(2)^{\psi})$. 
So we have 
\[
F_4\{b_2,c_2,d_3\}=d_3\{b_2,c_2,d_3\}+a_1G\{b_2,c_2,d_3\}.
\]  
But $a_1\{b_2,c_2,d_3\}$ is in $Span(\{a_1,b_2,c_2\},\{a_1,b_2,d_3\},\{a_1,c_2,d_3\})$,
so adjusting $F_1$, $F_2$, $F_3$, we may assume that 
$F_4=d_3$.
If we put 
\[
F_0=-\frac{1}{9}a_1(-6a_1^2b_2+6a_1^2c_2+2b_2^2-b_2c_2-c_2^2),
\]
then we hve $W(F_0|[M_1])=-A^2B^4C^4a^2b^4c^4$.
This means that $F_1=F_0+d_3G_2$ for some $G_2$. 
Now the simultaneous equation \eqref{eq1} and \eqref{eq2} 
is now written as 
\begin{align*}
\frac{a_1}{3}(F_2-2F_3)& \sim -A^2B^4C^4a^2b^4c^4-F_1-\frac{1}{3}(c_2+2b_2-18a_1^2)d_3, 
\\
-\frac{a_1}{3}(F_3+F_2)& \underset{M_2}{\sim} -A^2B^4C^4a^2b^4c^4-F_1-\frac{1}{3}(b_2-c_2)d_3.
 \end{align*}
We can show this has a solution for $F_1=F_0$. Indeed the following 
forms give a solution:
\begin{align*}
F_1& =F_0, \\
F_2 & =  4a_1^4-\frac{8b_2^2}{9}+2a_1^2c_2
-\frac{14}{9}b_2c_2+\frac{4c_2^2}{9}+18a_1d_3,\\
F_3 & = -4a_1^4+\frac{2}{9}b_2^2-2a_1^2c_2
+\frac{8b_2c_2}{9}+\frac{8c_2^2}{9}-6a_1d_3,\\
F_4 & = d_3,
\end{align*}
So we can show that $(\X_{11},\frac{(2\pi i)^2}{11\cdot 16}H)$
is in the image of $\xi$ for the  $H$ of \eqref{H}, where $F_i$ are given as above.

Now we will determine $h\in A_{*,2}^{II}(\Gamma_0^0(2)^{\psi})$ such that $(0,h)$ is in the image of $\xi$.
We put $M_0=1_4$ and write  
\[
V_i=\{h\in A_{*,2}^{II}(\Gamma_0^0(2)^{\psi}); W((h|[M_i])_{11})=0\}.
\]
Here $A_{*,2}^{II}(\Gamma_0^0(2)^{\psi})$ is 
spanned by $\{a_1,b_2,c_2\}$, $\{a_1,b_2,d_3\}$, $\{a_1,c_2,d_3\}$, $\{b_2,c_2,d_3\}$.
The fundamental relation over $A^{I}(\Gamma_0^0(2)^{\psi})$ is 
\begin{equation}\label{fund00}
a_1\{b_2,c_2,d_3\}+b_2\{a_1,c_2,d_3\}+c_2\{a_1,b_2,d_3\}+d_3\{a_1,b_2,c_2\}=0.
\end{equation}
Here we may replace $d_3$ by $f_3$ or $g_3$. 
Since we have $W(f_3)=W(\p_{ii}f)=0$, $W(d_3|[M_1])=W(\p_{12}(d_3|[M_1])=0$, 
$W(g_3|[M_2])=W(\p_{12}(g_3|[M_2]))=0$, $W(a_1|[M_3])=W(\p_{12}(a_1|[M_3])=0$,
and since we see that $W_{11}(\{a_1,b_2,c_2\}_{11})$, $W_{11}(\{a_1,b_2,c_2\}|[M_1])$,
$W_{11}(\{a_1,b_2,c_2\}|[M_2])$, $W_{11}(\{b_2,c_2,d_3\})$ are all non-zero,  
we have 
\begin{align*}
V_0 & =Span(f_3\{a_1,b_2,c_2\},\{a_1,b_2,f_3\},\{a_1,c_2,f_3\},\{b_2,c_2,f_3\}), \\
V_1 & = Span(d_3\{a_1,b_2,c_2\},\{a_1,b_2,d_3\},\{a_1,c_2,d_3\}.\{b_2,c_2,d_3\}), \\
V_2 & = Span(g_3\{a_1,b_2,c_2\},\{a_1,b_2,g_3\},\{a_1,c_2,g_3\},\{b_2,c_2,g_3\}), \\
V_3 & = Span(a_1\{b_2,c_2,d_3\},\{a_1,b_2,c_2\},\{a_1,b_2,d_3\},\{a_1,c_2,d_3\}).
\end{align*}
By \eqref{fund00} or by its variant, we also have
\begin{align*}
V_0& = Span(\{a_1,b_2,f_3\},\{a_1,c_3,f_3\},\{b_2,c_2,f_3\}),\\
V_1& = Span(\{a_1,b_2,d_3\},\{a_1,c_2,d_3\},\{b_2,c_2,d_3\}), \\
V_2& = Span(\{a_1,b_2,g_3\},\{a_1,c_2,g_3\},\{b_2,c_2,g_3\}), \\
V_3 & = Span(\{a_1,b_2,c_2\},\{a_1,b_2,d_3\},\{a_1,c_2,d_3\}).
\end{align*}
We determine $V_1\cap V_3$. We write an element $h_1\in V_1\cap V_3$ as
\begin{align*}
h_1& =F_1\{a_1,b_2,d_3\}+F_2\{a_1,c_2,d_3\}+F_3\{b_2,c_2,d_3\}
\\ & =F_4\{a_1,b_2,c_2\}+F_5\{a_1,b_2,d_3\}+F_6\{a_1,c_2,d_3\}.
\end{align*}
Since the fundamental relation is \eqref{fund00}, we have 
\[
F_1-F_5=c_2G, \quad F_2-F_6=b_2G, \quad F_3=a_1G, \quad F_4=d_3G.
\]
This means that 
\[
h_1=G d_3\{a_1,b_2,c_2\}+F_5\{a_1,b_2,d_3\}+F_6\{a_1,c_2,d_3\}
\]
for arbitrary $G$, $F_5$, $F_6$ so 
\[
V_1\cap V_3=Span(d_3\{a_1,b_2,c_2\},\{a_1,b_2,d_3\},\{a_1,c_2,d_3\}).
\]
Since $d_3=f_3-\frac{1}{2}a_1(a_1^2-3b_2-6c_2)$, we can replace $d_3$ by $f_3$ in 
the generator of $V_3$. So in the same way as before, we see 
\[
V_0\cap V_3=Span(f_3\{a_1,b_2,c_2\},\{a_1,b_2,f_3\},\{a_1,c_2,f_3\}).
\]
Since $d_3=g_3+\frac{1}{3}a_1b_2-\frac{1}{3}a_1c_2$, in the same way we have 
\[
V_2\cap V_3 = Span(g_3\{a_1,b_2,c_2\},\{a_1,b_2,g_3\},\{a_1,c_2,g_3\}).
\]
Next for a fixed $h_2\in V_1\cap V_3$, we write
\[
h_2=F_1d_3\{a_1,b_2,c_2\}+F_2\{a_1,b_2,d_3\}+F_3\{a_1,c_2,d_3\}. 
\]
Then we have 
\[
h_2=(F_1d_3+\frac{1}{3}a_1F_2-\frac{2}{3}a_1F_3)\{a_1,b_2,c_2\}+F_2\{a_1,b_2,f_3\}
+F_3\{a_1,c_2,f_3\},
\]
and if $h_2\in V_0\cap V_3$ besides, then 
this means that $F_1d_3+\frac{a_1}{3}(F_2-F_3)=f_3G$ for some $G$. 
This is equivalent to say that 
\begin{equation}\label{eq3}
F_1(-2a_1^2+\frac{2}{c}b_2+\frac{1}{3}c_2)+\frac{1}{3}(F_2-2F_3) 
= f_3G_1
\end{equation}
for some $G_1$. In the same way, if we assume that $h_2\in V_2\cap V_3$, then 
we have 
\begin{equation}\label{eq4}
\frac{1}{3}(b_2-c_2)F_1-\frac{1}{3}F_2-\frac{1}{3}F_3=g_3G_2
\end{equation}
for some $G_2$. Regarding \eqref{eq3} and \eqref{eq4} as a simultaneous 
equation with respect to $F_2$ and $F_3$, we have 
\begin{align*}
F_2 & = f_3G_1-2g_3G_2-(-2a_1^2+c_2)F_1, \\
F_3 & =-f_3G_1-g_3G_2+(-2a_1^2+b_2)F_1. \\
\end{align*}
Writing $h_2$ by using this, we have 
\begin{align*}
h_2= & F_1(d_3\{a_1,b_2,c_2\}+(2a_1^2-c_2)\{a_1,b_2,d_3\}+(-2a_1^2+b_2)\{a_1,c_2,d_3\})
\\ & +G_1f_3(\{a_1,b_2,d_3\}-\{a_1,c_2,d_3\})
\\ & - G_2g_3(2\{a_1,b_2,d_3\}+\{a_1,c_2,d_3\}).
\end{align*}
This means the following theorem.
\begin{theorem}
$J_{*,1}^{II}(\Gamma_0^0(2)^{\psi})$ is spanned over $A^I(\Gamma_0^0(2)^{\psi})$ 
by the following $4$ linearly independent basis of weight 
$11$, $9$, $10$, $10$, whose images by $\xi$ are
\begin{align*}
& (\X_{11},\frac{(2\pi i)^2}{11\cdot 16}H), \\
& (0, d_3\{a_1,b_2,c_2\}+(2a_1^2-c_2)\{a_1,b_2,d_3\}+(-2a_1^2+b_2)\{a_1,c_2,d_3\}), 
\\ & (0,f_3(\{a_1,b_2,d_3\}-\{a_1,c_2,d_3\})), \\
& (0,g_3(2\{a_1,b_2,d_3\}+\{a_1,c_2,d_3\})).
\end{align*}
\end{theorem}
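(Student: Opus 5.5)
The plan is to use the reduction already set up in this section. By Proposition \ref{propwittcondition} and Lemma \ref{gamma002orbit}, an element $(f,h)$ lies in $\xi(J_{*,1}^{II}(\Gamma_0^0(2)^{\psi}))$ if and only if the Witt condition \eqref{wittcondition} holds for $M_0=1_4,M_1,M_2,M_3$. Since $A^{II}(\Gamma_0^0(2)^{\psi})=\chi_{11}A^I$ and $(\X_{11},\frac{(2\pi i)^2}{11\cdot 16}H)$ was shown above to lie in the image, I would argue as follows. Given any $\Phi$, first subtract $F\Phi_{11}$ with $F\in A^I$ so that $\xi_0(\Phi)=0$. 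By \eqref{moduleaction} the product $F\Phi_{11}$ still has first component $F\X_{11}$. It then remains to show that $\{h: (0,h)\in \mathrm{Im}\,\xi\}=V_0\cap V_1\cap V_2\cap V_3$ is the free $A^I$-module on the three listed forms, of weights $9,10,10$.

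The computation of the intersection has already been carried out. First, $V_1\cap V_3$ is spanned by $d_3\{a_1,b_2,c_2\}$, $\{a_1,b_2,d_3\}$ and $\{a_1,c_2,d_3\}$. To get this, I use \eqref{fund00} as the only syzygy among the four triple brackets, which is quoted from \cite{aokivector}. Next, I write $h_2\in V_1\cap V_3$ in the basis $\{a_1,b_2,c_2\},\{a_1,b_2,f_3\},\{a_1,c_2,f_3\}$ and in the analogous basis with $g_3$. The conditions $h_2\in V_0$ and $h_2\in V_2$ then become the divisibility relations \eqref{eq3} and \eqref{eq4}. These are justified because the kernel of $W$ on $A^I$ is $f_3A^I$, respectively $g_3A^I$ after the twist by $M_2$, and because $W_{11}(\{a_1,b_2,c_2\})$ and $W_{11}(\{a_1,b_2,c_2\}|[M_2])$ are nonzero. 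Solving the resulting linear system for $F_2,F_3$ in terms of $F_1,G_1,G_2$ gives $h_2$ as an $A^I$-combination of the three listed vectors. Conversely, each listed vector visibly lies in every $V_i$. Hence these three vectors together with $\Phi_{11}$ span $J^{II}_{*,1}$.

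For the weights, $d_3\{a_1,b_2,c_2\}$ has weight $3+(1+2+2+1)=9$, and the other two have weight $3+(1+2+3+1)=10$. The last step is freeness. Suppose $F\Phi_{11}+\sum F_i\Psi_i=0$. Taking first components gives $F\X_{11}=0$, so $F=0$, because $A(\Gamma)$ is a domain and $\X_{11}\neq0$. It remains to check that the three vectors in $A_{*,2}^{II}$ are $A^I$-independent. I would express them in the basis $\{a_1,b_2,c_2\},\{a_1,b_2,d_3\},\{a_1,c_2,d_3\}$, which is free by \eqref{fund00}, since the only relation involves $\{b_2,c_2,d_3\}$. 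The coefficient matrix is
\[
\begin{pmatrix} d_3 & 2a_1^2-c_2 & -2a_1^2+b_2\\ 0 & f_3 & -f_3\\ 0 & 2g_3 & g_3\end{pmatrix},
\]
whose determinant is $3d_3f_3g_3\neq 0$, so independence follows.

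The main obstacle I expect is justifying the divisibility steps, namely that $W(F|[M])=0$ forces divisibility by the appropriate form $f_3,d_3,g_3,a_1$ inside the polynomial ring. I would check this from the table of Witt images: under each $M_i$ the images of the three remaining generators are algebraically independent, so the kernel is principal. A further point is that the dimension formula in the Corollary must match, i.e. $(t^{11}+t^9+2t^{10})/((1-t)(1-t^2)^2(1-t^3))$, and this follows at once from freeness.
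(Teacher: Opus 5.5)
Your proposal follows the paper's own argument. You reduce to $\xi_0(\Phi)=0$ using the weight-$11$ form with $\xi_0=\X_{11}$, describe $V_1\cap V_3$ via the single syzygy \eqref{fund00}, translate membership in $V_0$ and $V_2$ into the divisibility relations \eqref{eq3} and \eqref{eq4} (this uses that $a_1$ is coprime to $f_3$ and $g_3$ in the polynomial ring $A^I$), and solve for $F_2,F_3$.

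Your determinant $3d_3f_3g_3$, taken with respect to the free family $\{a_1,b_2,c_2\},\{a_1,b_2,d_3\},\{a_1,c_2,d_3\}$, is a correct explicit proof of the linear independence that the paper only asserts. One small point: the first listed vector's membership in $V_0$ and $V_2$ is not quite ``visible''. It follows because, after rewriting $d_3$ in terms of $f_3$ (resp.\ $g_3$), its $\{a_1,b_2,c_2\}$-coefficient becomes $-f_3$ (resp.\ $g_3$).
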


\end{document}